\documentclass[A4paper,11pt]{amsart}

\usepackage[english]{babel}
\usepackage{amsmath}
\usepackage{amssymb} 
\usepackage{amsfonts}
\usepackage[all]{xy}
\usepackage{hyperref}
\usepackage{lscape}

\textwidth15cm
\textheight22cm
\evensidemargin.6cm
\oddsidemargin.6cm
\topmargin-.1cm

\newtheorem{proposition}{Proposition}[section] 
\newtheorem{lemma}[proposition]{Lemma}
\newtheorem*{lemma*}{Lemma}
\newtheorem{theorem}[proposition]{Theorem}

\theoremstyle{definition}
\newtheorem{definition}[proposition]{Definition}
\newtheorem*{definition*}{Definition}
\newtheorem{example}[proposition]{Example}
\newtheorem*{example*}{Example}
\newtheorem{remark}[proposition]{Remark}

\numberwithin{equation}{section}

\def\ox{\otimes}
\def\M{\mathbb M}

\def\op{\mathsf{op}}
\def\bl{\raisebox{1pt}{\scalebox{.6}{$\,\blacktriangleright\,$}}}
\def\br{\raisebox{1pt}{\scalebox{.6}{$\,\blacktriangleleft\,$}}}
\def\wl{\raisebox{1pt}{\scalebox{.7}{$\,\triangleright\,$}}}
\def\wr{\raisebox{1pt}{\scalebox{.7}{$\,\triangleleft\,$}}}

\begin{document}

\title[Yetter-Drinfeld modules
over weak multiplier bialgebras]{Yetter-Drinfeld modules\\ 
over weak multiplier bialgebras}
\author{Gabriella B\"ohm}
\address{Wigner Research Centre for Physics, H-1525 Budapest 114,
P.O.B.\ 49, Hungary} 
\email{bohm.gabriella@wigner.mta.hu}
\thanks{This research was supported by the Hungarian Scientific Research Fund
OTKA, grant K108384.} 
\begin{abstract}
We continue the study of the representation theory of a {\em regular weak
multiplier bialgebra with full comultiplication}, started in \cite{BoGTLC:wmba,
Bo:wmba_comod}. {\em Yetter-Drinfeld modules} are defined as modules and
comodules, with compatibility conditions that are equivalent to a canonical
object being (weakly) central in the category of modules, and equivalent also
to another canonical object being (weakly) central in the category of
comodules. Yetter-Drinfeld modules are shown to constitute a monoidal category
via the (co)module tensor product over the base (co)algebra. Finite dimensional
Yetter-Drinfeld modules over a regular weak multiplier Hopf algebra with full
comultiplication are shown to possess duals in this monoidal category.
\end{abstract}
\subjclass[2010]{16T05, 16T10, 16D90, 18D10}
\date{Nov 2013}
\maketitle

\section*{Introduction}

{\em Weak bialgebra} (over a field) \cite{Nill, WHAI} is a generalization of
usual bialgebra in the following sense. It is a unital algebra and a counital
coalgebra but the multiplication and the unit are not required to be coalgebra
maps --- symmetrically, the comultiplication and the counit are not required
to be algebra maps. Instead, some weaker axioms are imposed which still
ensure the expected behavior of the representation categories (see
e.g. \cite{BoCaJa}). Namely, any (not necessarily finite dimensional) weak
bialgebra $A$ has a distinguished unital subalgebra and counital quotient
coalgebra $R$ (named the {\em base (co)algebra}). It is isomorphic to the base
field if and only if $A$ is a usual bialgebra. The algebra and coalgebra
structures of $R$ obey compatibility conditions that make it a separable
Frobenius algebra \cite{Scha:wha_qgr,Szl:FinGal}. Consequently, the category
of $R$-modules and the category of $R$-comodules are isomorphic and also the
$R$-module tensor product of any two $R$-(co)modules is isomorphic to their
$R$-comodule tensor product. In fact, any weak bialgebra $A$ contains not only
$R$ but $R\ox R^{\mathsf{op}}$ both as a subalgebra and as a quotient
coalgebra. Therefore any $A$-module and any $A$-comodule carries a natural
$R$-bi(co)module structure. Via the $R$-(co)module tensor product, both the
category $\mathsf M_A$ of $A$-modules and the category $\mathsf M^A$ of
$A$-comodules are monoidal. That is to say, there are strict monoidal
forgetful functors $U_A$ from $\mathsf M_A$, and $U^A$ from $\mathsf M^A$, to
the category ${}_R \mathsf M_R$ of $R$-bi(co)modules. Similarly to the case of
usual (i.e. non-weak) bialgebras studied in \cite{Majid}, the category
$\mathsf{M}^A$ can be recovered from the strict monoidal forgetful functor
$U_A:\mathsf{M}_A\to {}_R \mathsf{M}_R$ as a `commutant' of the range of $U_A$
in ${}_R \mathsf{M}_R$. Symmetrically, the category $\mathsf{M}_A$ can be
re-covered from the strict monoidal forgetful functor $U^A:\mathsf{M}^A\to
{}_R \mathsf{M}_R$ as a `commutant' of the range of $U^A$ in ${}_R
\mathsf{M}_R$. The category of {\em Yetter-Drinfeld modules} over a weak
bialgebra $A$ can be described as the weak monoidal center of $\mathsf{M}_A$,
equivalently, as the weak monoidal center of $\mathsf{M}^A$, see \cite{Nenciu,
Cae:YD}. Thus it is again a monoidal category with a strict monoidal
forgetful functor to ${}_R \mathsf{M}_R$. 

A {\em weak Hopf algebra} is a weak bialgebra equipped with an antipode in the
appropriate sense \cite{WHAI}. If the antipode of a weak Hopf algebra $A$ is
bijective, then finite dimensional Yetter-Drinfeld $A$-modules possess duals
in the monoidal category of Yetter-Drinfeld $A$-modules \cite{Cae:YD}.

{\em Weak multiplier Hopf algebra} was introduced in
\cite{VDaWa:Banach,VDaWa}. It is a generalization of weak Hopf algebra in the
same spirit as Van Daele's multiplier Hopf algebra \cite{VDae:MHA} extends a
usual Hopf algebra. That is, the underlying algebra $A$ does not need to
possess a unit. Instead, surjectivity and non-degeneracy of the multiplication
is required. In this situation, there is a (largest possible) unital algebra
$\M(A)$ (known as the {\em multiplier algebra} of $A$) such that $A$ is a
dense ideal in $\M(A)$. A weak multiplier Hopf algebra is not a coalgebra
either but it has a generalized comultiplication --- which is a multiplicative
map from $A$ to the multiplier algebra of the tensor product algebra $A\ox A$
--- and a generalized counit in the appropriate sense. Among the compatibility
axioms, certain canonical maps $A\ox A \to A\ox A$ are required to be weakly
(i.e. Von Neumann) invertible. The existence and the expected properties of
the antipode map $A\to \M(A)$ follow from the axioms. A weak multiplier Hopf
algebra is said to be {\em regular} if the opposite algebra $A^{\mathsf{op}}$
is a weak multiplier Hopf algebra too. In this case the antipode restricts to
an isomorphism $A\to A$ \cite{VDaWa}. 

{\em Weak multiplier bialgebra} was defined and studied in
\cite{BoGTLC:wmba}. One expects that it should be `weak multiplier Hopf 
algebra without an antipode' and this is almost the case. In fact, any weak
multiplier bialgebra which possesses an antipode is a weak multiplier Hopf
algebra. The converse is not true, however: one of the axioms of weak
multiplier bialgebra (expressing some compatibility between the counit and the
multiplication) may not hold in an arbitrary weak multiplier Hopf algebra
(though it holds in a regular weak multiplier Hopf algebra). 

As in the Hopf case, also a weak multiplier bialgebra $A$ is termed {\em
regular} whenever $A^{\mathsf{op}}$ is a weak multiplier bialgebra
too. Whenever a regular weak multiplier bialgebra $A$ obeys a further
property called {\em fullness of the comultiplication} (see \cite{VDaWa}), the
multiplier algebra $\M(A)$ turns out to have a distinguished (non-unital)
subalgebra $R$. This subalgebra is called the {\em base (co)algebra} for a
classical analogy. Indeed, it was shown in \cite{BoGTLC:wmba} that $R$
possesses the structure of a coseparable coalgebra (that can be considered as
a non-unital generalization of separable Frobenius algebra). As discussed in
\cite{BoVe,BoGT:fF}, this implies that the category of firm $R$-modules and
the category of $R$-comodules are isomorphic and also the $R$-module tensor
product of any two firm $R$-modules (i.e. of $R$-comodules) is isomorphic to
their $R$-comodule tensor product. 

In the study of weak multiplier bialgebras, in the absence of an algebraic
unit, it is a crucial question what is the appropriate notion of {\em
module}. Working with any associative action seems to be too general: we can
not see a monoidal structure of the category of such modules. However,
restricting to modules --- over a regular weak multiplier bialgebra $A$ with a
full comultiplication --- whose action is surjective and non-degenerate, we
can observe the expected behavior: As proven in \cite{BoGTLC:wmba}, any such
module can be equipped with the structure of a firm $R$-bimodule. What is
more, they constitute a monoidal category $\mathsf{M}_{(A)}$ admitting a
strict monoidal forgetful functor $U_{(A)}$ to the category ${}_R
\mathsf{M}_R$ of firm $R$-bimodules. 

Since a weak multiplier bialgebra $A$ has no coalgebra structure, it is not
obvious either what to mean by an $A$-{\em comodule}. This question was
addressed in \cite{Bo:wmba_comod}. As in \cite{VDaZha:corep_I} in the case of
a multiplier Hopf algebra, a right $A$-comodule $M$ is defined by a pair of
compatible linear maps $\lambda,\varrho:M\ox A \to M\ox A$ (generalizing the
left and right multiplication by the image of $m\in M$ under the usual
coaction of a bialgebra $A$ on $M$). Coassociativity of such a generalized
comodule can be formulated as a pentagonal identity, see
\cite{VDaZha:corep_I}. In the case of a {\em weak} multiplier bialgebra, it
has to be supplemented by a normalization condition. The property which
generalizes the counitality of a usual coaction, appears as the {\em
fullness} of $\lambda$ and $\varrho$. As proven in \cite{Bo:wmba_comod}, full
comodules over a regular weak multiplier algebra $A$ with a full
comultiplication carry a firm $R$-bimodule structure. Moreover, they
constitute a monoidal category $\mathsf{M}^{(A)}$ admitting a strict monoidal
forgetful functor $U^{(A)}$ to the category ${}_R \mathsf{M}_R$ of firm
$R$-bimodules.

As a next step, the aim of this paper is to define and study {\em
Yetter-Drinfeld modules} over a regular weak multiplier bialgebra $A$ with full
comultiplication. In Section \ref{sec:embed} we study the `commutants'
$(\mathsf{M}_{(A)})^{U_{(A)}}$ and ${}^{U^{(A)}}(\mathsf{M}^{(A)})$ of the
ranges of the functors $U_{(A)}$ and $U^{(A)}$ in ${}_R
\mathsf{M}_R$. Although --- in contrast to usual (i.e. unital) weak bialgebras
--- they are not isomorphic to $\mathsf{M}^{(A)}$ and $\mathsf{M}_{(A)}$,
respectively, there are full embeddings $\mathsf{M}^{(A)} \to
(\mathsf{M}_{(A)})^{U_{(A)}}$ and $\mathsf{M}_{(A)} \to {}^{U^{(A)}}
(\mathsf{M}^{(A)})$. Applying these embedding functors to a vector space $X$
carrying both a non-degenerate surjective $A$-action and a full $A$-coaction,
we obtain canonical natural transformations $X\ox_R U^{(A)}(-)\to
U^{(A)}(-)\ox_R X$ and $U_{(A)}(-)\ox_R X\to X\ox_R U_{(A)}(-)$. In Section
\ref{sec:YD} we prove that if the induced $R$-actions on $X$ coincide, then
the first one of these natural transformations defines an object of the weak
left center of $\mathsf{M}^{(A)}$ if and only if the second one defines an
object of the weak right center of $\mathsf{M}_{(A)}$. We use these equivalent
properties to define a Yetter-Drinfeld module over $A$. The resulting category
of Yetter-Drinfeld $A$-modules is shown in Section \ref{sec:YD_cat} to be
monoidal with strict monoidal forgetful functors to $\mathsf{M}_{(A)}$ and to
$\mathsf{M}^{(A)}$. If $A$ is a regular weak multiplier Hopf algebra in the
sense of \cite{VDaWa} (that is, a regular weak multiplier bialgebra with a
full comultiplication such that both $A$ and its opposite possess an
antipode), then we show that finite dimensional Yetter-Drinfeld $A$-modules
possess duals in this monoidal category. Some preliminary results about weak
multiplier bialgebras and Hopf algebras, and about their modules and
comodules, are recalled in Section \ref{sec:prelims} from \cite{VDaWa:Banach},
\cite{VDaWa}, \cite{BoGTLC:wmba} and \cite{Bo:wmba_comod}. 

\section{Preliminaries}\label{sec:prelims}

In this section we recall from \cite{VDaWa:Banach}, \cite{VDaWa},
\cite{BoGTLC:wmba} and \cite{Bo:wmba_comod} the most important information
about weak multiplier bialgebras and weak multiplier Hopf algebras, their
modules and comodules. No proofs that can be found in these papers are
repeated. However, we prove here a few technical lemmata for later use. We
also revisit the construction of commutants of strict monoidal functors in
\cite{Majid}.
 
\subsection{Notation}

For any vector space $P$, we denote by $P$ also the identity map $P\to P$. On
elements $p\in P$, also the notation $p\mapsto 1p$ or $p\mapsto p1$ is used
for the action of the identity map, meaning multiplication by the unit $1$ of
the base field. For any vector spaces $P$ and $Q$, we denote by
$\mathsf{Lin}(P,Q)$ the vector space of linear maps $P\to Q$. We denote by
$\mathsf{tw}:P\ox Q \to Q\ox P$ the flip map $p\ox q \mapsto q\ox p$. For a
linear map $f:P\ox Q \to P'\ox Q'$, we use the leg numbering notation
$f^{21}:= \mathsf{tw} f \mathsf{tw}:Q\ox P \to Q'\ox P'$. For any
further vector space $R$, we also use the notation 
$$
\begin{array}{lcl}
f^{13}:=(P' \ox \mathsf{tw})(f\ox R)(P \ox \mathsf{tw})&:
&P \ox R \ox Q \to P' \ox R \ox Q', \\
f^{31}:=(Q' \ox \mathsf{tw})(f^{21}\ox R)(Q \ox \mathsf{tw})&: 
&Q \ox R \ox P \to Q' \ox R \ox P', 
\end{array}
$$
and its variants. For a subset $P_0$ of a vector space $P$, we denote by
$\langle P_0 \rangle$ the subspace of $P$ spanned by the elements of $P_0$.

\subsection{Multiplier algebra}

Consider an algebra $A$ over a field with an associative multiplication
$\mu:A\ox A \to A$ but possibly without a unit. On elements $a,b\in A$,
multiplication is denoted by juxtaposition: $\mu(a\ox b)=ab$. The algebra $A$
is called {\em idempotent} if $\mu$ is surjective; that is, any element of $A$
is a linear combination of elements of the form $ab$ for $a,b\in A$. The
multiplication $\mu$ (or with an alternative terminology, the algebra $A$) is
{\em non-degenerate} if both maps $A\to \mathsf{Lin}(A,A)$,
$$
a\mapsto \mu(a\ox -)\quad \textrm{and} \quad a\mapsto \mu(-\ox a)
$$
are injective. If $A$ is an idempotent and non-degenerate algebra, then it was
proven in \cite{Dauns:Multiplier} that there is a unique largest unital
algebra $\M(A)$ which contains $A$ as a dense ideal (i.e. such that if
$aw=0$ for all $a\in A$ or $w a=0$ for all $a\in A$, then $\M(A)\ni
w=0$). This algebra $\M(A)$ is called the {\em multiplier algebra} of $A$
and it is of the following explicit form. An element $w$ of $\M(A)$ is
given by two linear maps $A\to A$, denoted by 
$$
a\mapsto w a \qquad \textrm{and}\qquad a\mapsto a w,
$$
respectively, such that $a(w b)=(aw) b$ for all $a,b\in A$ (hence
$w(-)$ is a right $A$-module map and $(-)w$ is a left $A$-module
map). The multiplication --- to be denoted by $\mu$ again, and written as
juxtaposition of elements --- is provided by the composition and opposite
composition of these maps and the unit is the pair whose members are equal to
the identity map. The embedding 
$$
A \to \M(A),\qquad 
a\mapsto (\mu(a\ox -),\mu(-\ox a))
$$
makes $A$ a dense ideal, indeed. 

For any idempotent non-degenerate algebra $A$, also the {\em opposite algebra}
obtained from $A$ by reversing the order of the multiplication (that is,
replacing $\mu$ by $\mu^{\mathsf{op}}:=\mu \mathsf{tw}$) is again an
idempotent non-degenerate algebra. Its multiplier algebra obeys
$\M(A^{\mathsf{op}})\cong \M(A)^{\mathsf{op}}$. For idempotent non-degenerate
algebras $A$ and $B$, the tensor product algebra $A\ox B$ is an idempotent
non-degenerate algebra again. For an element $w\in \M(A\ox B)$, we extend our
leg numbering convention by putting for any labels $i,j$,
$$
w^{ij}(-)=[w(-)]^{ij}\qquad\textrm{and}\qquad
(-)w^{ij}=[(-)w]^{ij}.
$$

The following theorem due to Van Daele and Wang is of crucial importance.

\begin{theorem}\cite[Proposition A.3]{VDaWa:Banach}\label{thm:extend}
Let $A$ and $B$ be idempotent non-degenerate algebras. Let $\phi:A\to \M(B)$
be a multiplicative map and $e$ be an idempotent element of $B$ (i.e. such that
$e^2=e$). If 
$$
\langle \phi(a)b\ |\ a\in A,b\in B\rangle=\langle eb\ | \ b\in B\rangle
\quad \textrm{and}\quad
\langle b\phi(a)\ |\ a\in A,b\in B\rangle=\langle be\ | \ b\in B\rangle
$$
then there is a unique multiplicative map $\overline \phi:\M(A)\to \M(B)$ such
that $\overline \phi(1)=e$ and for all elements $a\in A$, $\overline
\phi(a)=\phi(a)$.
\end{theorem}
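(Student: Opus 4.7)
The plan is to construct, for each $w\in\M(A)$, a multiplier $\overline\phi(w)=(L_w,R_w)\in\M(B)$. Any multiplicative extension with $\overline\phi(1)=e$ must satisfy $\overline\phi(w)\phi(a)=\phi(wa)$ and $\phi(a)\overline\phi(w)=\phi(aw)$, since $wa,aw\in A$. So, decomposing $eb=\sum_i \phi(a_i)b_i$ by the first span hypothesis, the formula is forced to be
$$L_w(b):=\sum_i \phi(wa_i)b_i,$$
depending on $b$ only through $eb$; symmetrically $R_w(b):=\sum_j b'_j\phi(a'_jw)$ from the decomposition $be=\sum_j b'_j\phi(a'_j)$. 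As a preliminary step I would record $e\phi(a)=\phi(a)=\phi(a)e$ for every $a\in A$: the first span equality lets us write $\phi(a)b=\sum_k eb'_k$, giving $e\phi(a)b=\phi(a)b$ for all $b\in B$, and non-degeneracy of $B$ then lifts this identity to $\M(B)$.

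The main obstacle is well-definedness of $L_w$ (and, by a mirror argument, of $R_w$): if $\sum_i\phi(a_i)b_i=0$ one must show $\sum_i\phi(wa_i)b_i=0$. For an arbitrary $c\in B$, use the second span hypothesis to write $ce=\sum_j b'_j\phi(a'_j)$. Then, since $a'_jw\in A$ and $\phi$ is multiplicative on $A$,
$$c\sum_i\phi(wa_i)b_i=ce\sum_i\phi(wa_i)b_i=\sum_{i,j}b'_j\phi(a'_jwa_i)b_i=\sum_j b'_j\phi(a'_jw)\sum_i\phi(a_i)b_i=0,$$
where the first equality uses $\phi(wa_i)=e\phi(wa_i)$ from the preliminary step. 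Non-degeneracy of the multiplication in $B$ then yields $\sum_i\phi(wa_i)b_i=0$. The multiplier identity $b_1L_w(b_2)=R_w(b_1)b_2$ falls out of the same manipulation: expanding $eb_2=\sum_i\phi(a_i)b_i$ and $b_1e=\sum_j b'_j\phi(a'_j)$ and inserting idempotents, both sides collapse to $\sum_{i,j}b'_j\phi(a'_jwa_i)b_i$.

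What remains is to verify that $\overline\phi:\M(A)\to\M(B)$, $w\mapsto(L_w,R_w)$, is multiplicative and restricts to $\phi$ on $A$, with $\overline\phi(1)=e$. For $w=1$ the formula yields $L_1(b)=eb$ and $R_1(b)=be$, so $\overline\phi(1)=e$; for $w=a\in A$ the trivial one-term decomposition $\phi(a')b'\in\phi(A)B$ recovers left multiplication by $\phi(a)$. Multiplicativity follows by a direct calculation: $L_v(b)=\sum_i\phi(va_i)b_i$ already lies in $\phi(A)B$ term by term, so applying $L_w$ to each summand via its trivial decomposition gives $\sum_i\phi(wva_i)b_i=L_{wv}(b)$. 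Uniqueness is then automatic, since any multiplicative extension $\overline\phi$ with $\overline\phi(1)=e$ must satisfy
$$\overline\phi(w)b=\overline\phi(w)\cdot eb=\sum_i\overline\phi(w)\phi(a_i)b_i=\sum_i\phi(wa_i)b_i,$$
which is precisely our formula. I expect the well-definedness argument to be the only delicate point; everything else is a book-keeping exercise in the span hypotheses and non-degeneracy.
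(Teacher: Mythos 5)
Your proof is correct: the forced formula $L_w\bigl(\sum_i\phi(a_i)b_i\bigr)=\sum_i\phi(wa_i)b_i$, the preliminary identity $e\phi(a)=\phi(a)=\phi(a)e$, the well-definedness argument via the second span hypothesis and non-degeneracy of $B$, and the verifications of the multiplier identity, multiplicativity and uniqueness are all sound. Note that the paper itself does not prove this statement but imports it from \cite[Proposition A.3]{VDaWa:Banach}; your argument is essentially the standard one given in that reference.
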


\subsection{Weak multiplier bialgebra}
A {\em weak multiplier bialgebra} $A$ over a field $k$ is given by the
following data. 
\begin{itemize}
\item an idempotent non-degenerate $k$-algebra $A$,
\item linear maps $E_1,E_2,T_1,T_2:A\ox A \to A\ox A$,
\item a linear map $\epsilon:A\to k$.
\end{itemize}
They are subject to the axioms (i)-(ix) in \cite[Definition
1.1]{Bo:wmba_comod}. An equivalent but more traditional definition can be
found in \cite[Definition 2.1]{BoGTLC:wmba}. 

By axioms (iii) and (iv) in \cite[Definition 1.1]{Bo:wmba_comod}, the maps
$T_1$ and $T_2$ determine a multiplicative map $\Delta:A \to \M(A\ox
A)$ --- called the {\em comultiplication} --- via the prescriptions 
$$
\Delta(a)(b\ox c)=T_1(a\ox c)(b\ox 1)\quad \textrm{and}\quad
(b\ox c)\Delta(a)=(1\ox c)T_2(b\ox a),\qquad \forall a,b,c\in A.
$$
By axioms (i) and (ii) in \cite[Definition 1.1]{Bo:wmba_comod}, the maps $E_1$
and $E_2$ determine an idempotent element $E\in \M(A\ox A)$ via 
$$
E(a\ox b)=E_1(a\ox b)\qquad \textrm{and}\qquad
(a\ox b)E=E_2(a\ox b)\qquad \forall a,b\in A.
$$
By axiom (vii) in \cite[Definition 1.1]{Bo:wmba_comod} and by Theorem
\ref{thm:extend}, $\Delta$ extends to a multiplicative map $\overline
\Delta:\M(A)\to \M(A\ox A)$ such that $\overline \Delta(1)=E$.

By \cite[Propositions 2.4 and 2.6]{BoGTLC:wmba}, the idempotent element $E$
allows for the definition of linear maps 
\begin{eqnarray}
\label{eq:PiBarR}
&\overline \sqcap^R: A \to \M(A),\qquad
&a\mapsto (\M(A)\ox \epsilon)[E(1\ox a)]\\
&\overline \sqcap^L: A \to \M(A),\qquad
&a\mapsto (\epsilon \ox \M(A))[(a\ox 1)E].
\label{eq:PiBarL}
\end{eqnarray}
Their properties are studied in Section 3 of \cite{BoGTLC:wmba}.

A weak multiplier bialgebra $A$ is said to be {\em regular} if there is
another weak multiplier bialgebra given by 
\begin{itemize}
\item the idempotent non-degenerate $k$-algebra $A^{\mathsf{op}}$,
\item the same linear maps $E_2,E_1:A\ox A \to A\ox A$ (but their roles
interchanged) and the linear maps 
\begin{eqnarray*}
&T_3:A\ox A \to A\ox A \qquad &a\ox b \mapsto (1\ox b)\Delta(a)\\
&T_4:A\ox A \to A\ox A \qquad &a\ox b \mapsto \Delta(b)(a\ox 1),
\end{eqnarray*}
\item the same linear map $\epsilon:A\to k$.
\end{itemize}
This is equivalent to the existence of {\em some} linear maps $T_3$ and $T_4$
obeying axiom (x) in \cite[Definition 1.1]{Bo:wmba_comod} (which axiom implies
that the induced comultiplication on $A^{\mathsf{op}}$ is equal to the
comultiplication $\Delta$ on $A$). For a regular weak multiplier bialgebra
$A$, there are further two linear maps 
\begin{eqnarray}
\label{eq:PiR}
&\sqcap^R: A \to \M(A),\qquad 
&a\mapsto (\M(A)\ox \epsilon)[(1\ox a)E]\\
\label{eq:PiL}
&\sqcap^L: A \to \M(A),\qquad
&a\mapsto (\epsilon \ox \M(A))[E(a\ox 1)].
\end{eqnarray}

The comultiplication of a weak multiplier bialgebra is said to be {\em right
full} if the subspace 
$$
\langle(A\ox \omega)T_1(a\ox b)\ |\ a,b\in A,\ \omega \in \mathsf{Lin}(A,k)
\rangle
$$
is equal to $A$. By \cite[Theorem 3.13]{BoGTLC:wmba}, the comultiplication of
a regular weak multiplier bialgebra is right full if and only if the ranges of
the maps $\overline \sqcap^R$ and $\sqcap^R$ coincide. In this situation this
coinciding range will be denoted by $R$. Symmetrically, the comultiplication
of a weak multiplier bialgebra is {\em left full} if the subspace 
$$
\langle
(\omega\ox A)T_2(a\ox b)\ |\ a,b\in A,\ \omega \in \mathsf{Lin}(A,k)
\rangle
$$
is equal to $A$. By \cite[Theorem 3.13]{BoGTLC:wmba}, the comultiplication of
a regular weak multiplier bialgebra is left full if and only if the ranges of
the maps $\overline \sqcap^L$ and $\sqcap^L$ coincide. In this situation this
coinciding range will be denoted by $L$. 

Consider a regular weak multiplier bialgebra $A$ over a field $k$ with a 
right full comultiplication. Then the subspace $R$ of $\M(A)$ above carries
the following structure.
\begin{itemize}
\item By \cite[Lemma 3.4]{BoGTLC:wmba}, $R$ is a (non-unital) subalgebra of
 $\M(A)$.
\item By \cite[Theorem 4.4]{BoGTLC:wmba}, $R$ is a coalgebra via the
 comultiplication and counit
$$
\delta:R \to R\ox R,\ \ \sqcap^R(ab)\mapsto (\sqcap^R\ox \sqcap^R)T_2(a\ox b)
\qquad 
\varepsilon:R\to k, \ \ \sqcap^R(a)\mapsto \epsilon(a).
$$
\item By \cite[Proposition 4.3~(3)]{BoGTLC:wmba}, the multiplication in $R$
 is split by the comultiplication $\delta$. 
\item By \cite[Proposition 4.3~(3)]{BoGTLC:wmba}, the multiplication in $R$ is
 a morphism of left and right $R$-comodules. Equivalently, the
 comultiplication $\delta$ is a morphism of left and right $R$-modules. 
\item By \cite[Theorem 4.6~(2)]{BoGTLC:wmba}, the algebra $R$ has (idempotent)
 local units (so in particular it is a firm\footnote{A (say, right)
 module $P$ over a non-unital algebra $R$ is said to be {\em firm} if the
 $R$-action $P\ox R \to P$ projects to a vector space isomorphism $P\ox_R R\to
 P$. The algebra $R$ is {\em firm} if it is a firm left, equivalently, right
 $R$-module via the multiplication. This terminology is attributed to an
 unpublished preprint by D. Quillen in 1997.} algebra). 
\end{itemize}
All these amount to saying that $R$ is a {\em coseparable coalgebra} (and hence
a {\em firm Frobenius algebra}, see \cite{BoGT:fF}). Then we know from
\cite[Proposition 2.17]{BoVe} that the category of firm modules over the firm
algebra $R$ is isomorphic to the category of comodules over the coalgebra $R$;
and the $R$-module tensor product $V\ox_R W$ of any firm $R$-modules $V$ and
$W$ is isomorphic to their $R$-comodule tensor product. This implies that the
canonical epimorphism 
$$
\pi_{V,W}:V\ox W \xymatrix@C=15pt{\ar@{->>}[r]&} V\ox_R W
$$
is split by the map
$$
\iota_{V,W}:V\ox_R W \xymatrix@C=10pt{\,\ar@{>->}[r]&} V \ox W,\qquad 
v\cdot r\ox_R w\mapsto (v\cdot(-)\ox (-)\cdot w)\delta(r).
$$
For $R$-bimodules $V,W$ and $Z$, we denote by $\pi_{V,W,Z}$ the epimorphism 
$$
\pi_{V\ox_R W,Z}(\pi_{V,W}\ox Z)=
\pi_{V,W\ox_R Z}(V\ox \pi_{W,Z}):
V\ox W \ox Z\to V\ox_R W \ox_R Z
$$
and we denote by $\iota_{V,W,Z}$ its section
$$
(\iota_{V,W}\ox Z)\iota_{V\ox_R W,Z}=
(V\ox \iota_{W,Z})\iota_{V,W\ox_R Z}:
V\ox_R W \ox_R Z\to V\ox W \ox Z.
$$
If the comultiplication of a regular weak multiplier bialgebra $A$ is left
full, then the subspace $L$ has analogous structures. 

If the comultiplication is right full, then by \cite[Lemma 4.8]{BoGTLC:wmba}
there are anti-multiplicative linear maps
\begin{eqnarray}
\label{eq:tau}
&\tau: R \to \overline \sqcap^L(A),\qquad &
\sqcap^R(a)\mapsto \overline \sqcap^L(a)\\
\label{eq:taubar}
&\overline \tau: R\to \sqcap^L(A),\qquad &
\overline \sqcap^R(a)\mapsto \sqcap^L(a).
\end{eqnarray}
If the comultiplication is left full, then by \cite[Lemma 4.8]{BoGTLC:wmba}
there are anti-multiplicative linear maps
\begin{eqnarray}
\label{eq:sigma}
&\sigma: L\to \sqcap^R(A),\qquad &\overline \sqcap^L(a)\mapsto \sqcap^R(a)\\
\label{eq:sigmabar}
&\overline \sigma: L\to \overline \sqcap^R(A),\qquad 
&\sqcap^L(a)\mapsto \overline \sqcap^R(a).
\end{eqnarray}
If the comultiplication is both left and right full, then by \cite[Proposition
4.9]{BoGTLC:wmba} these are anti-coalgebra isomorphisms such that
$\tau=\sigma^{-1}$ and $ \overline\tau= \overline\sigma^{\,-1}$.
The composite map $\vartheta:=\sigma \overline\sigma\, ^{-1}$ is the {\em
Nakayama automorphism} of $R$ --- meaning 
$\varepsilon(rs)=\varepsilon(\vartheta(s)r)$ for all $r,s\in R$.
The map $\overline\sigma\, ^{-1}\sigma$ is the Nakayama automorphism of $L$.

By \cite[Proposition 4.3~(1) and Proposition 4.11]{BoGTLC:wmba}, the formulae
\begin{equation}\label{eq:F}
F(a\ox b):=((A\ox \sigma)[E(a\ox 1)])(1\ox b)\qquad
(a\ox b)F:=(1\ox b)((A\ox \sigma)[(a\ox 1)E])
\end{equation}
define a multiplier $F$ on $A\ox A$. By a symmetric form of \cite[Proposition
4.11]{BoGTLC:wmba}, 
\begin{equation}\label{eq:F^op}
F^{21}(a\ox b):=((A\ox \overline\sigma)[E(a\ox 1)])(1\ox b)\qquad
(a\ox b)F^{21}:=(1\ox b)((A\ox \overline\sigma)[(a\ox 1)E]).
\end{equation}

\begin{lemma}\label{lem:dual_YD}
For a regular weak multiplier bialgebra $A$ with left and right full
comultiplication, and for any $a\in A$, the following expressions are equal.
\begin{itemize}
\item[{(a)}] $(R\ox \sqcap^R)[E(1\ox a)]$,
\item[{(b)}] $(\overline \sqcap^R \ox R)[(a\ox 1)F]$,
\item[{(c)}] $(\overline \sqcap^R \ox R)[(a\ox 1)E^{21}]$,
\item[{(d)}] $(R\ox \sqcap^R)[F(1\ox a)]$.
\end{itemize}
\end{lemma}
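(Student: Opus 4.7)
The plan is to unfold each of the four expressions using the definitions of $\sqcap^R$, $\overline\sqcap^R$, $\sigma$, and the multiplier $F$, and then match them via two identities: (i) $\sqcap^R = \sigma\circ \overline\sqcap^L$ (encoded in the definition \eqref{eq:sigma} of $\sigma$), and (ii) $F = (A\ox \sigma)(E)$ as multipliers (a direct consequence of \eqref{eq:F}), supplemented by a coassociativity manipulation on $E$.

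Writing $E = \sum e_1\ox e_2$ in Sweedler-like notation, with $e_1\in R$ and $e_2\in L$, and using a second copy $E' = \sum e'_1\ox e'_2$, the four expressions unfold to
\begin{align*}
\text{(a)} &= \sum e_1 \ox \sqcap^R(e_2 a) = \sum e_1\ox e'_1\, \epsilon(e_2 a e'_2), \\
\text{(b)} &= \sum \overline\sqcap^R(ae_1) \ox \sigma(e_2) = \sum e'_1\, \epsilon(e'_2 a e_1)\ox \sigma(e_2), \\
\text{(c)} &= \sum \overline\sqcap^R(ae_2)\ox e_1 = \sum e'_1\, \epsilon(e'_2 a e_2)\ox e_1, \\
\text{(d)} &= \sum e_1 \ox \sqcap^R(\sigma(e_2) a) = \sum e_1\ox e'_1\, \epsilon(\sigma(e_2) a e'_2).
\end{align*}

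For the equality (a)$=$(b), I would rewrite the $\sqcap^R$ in (a) as $\sigma\circ \overline\sqcap^L$, expand $\overline\sqcap^L(e_2 a) = \sum \epsilon(e_2 a e'_1)\, e'_2$, and then observe that the resulting expression $\sum e_1 \ox \sigma(e'_2)\, \epsilon(e_2 a e'_1)$ becomes exactly (b) after swapping the dummy labels $e\leftrightarrow e'$. For (c)$=$(d) the mirror argument works: use (ii) to write $F(1\ox a) = \sum e_1\ox \sigma(e_2)a$, then apply $\sqcap^R = \sigma\circ\overline\sqcap^L$ to rewrite (d) and relabel, with the anti-multiplicativity of $\sigma$ handling the placement of $\sigma(e_2)$ inside the $\epsilon$.

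The main obstacle is (a)$=$(c). Both expressions now live in $R\ox R$, but the $\epsilon$-contractions are positioned in different legs, so a genuine three-variable identity is required. The plan is to invoke the coassociativity of the extended comultiplication $\overline\Delta$ --- equivalently, an ``$E$-cocycle'' identity relating $E^{12}E^{23}$ and $E^{13}E^{23}$ that follows from the axioms of \cite[Definition 1.1]{Bo:wmba_comod} --- combined with the fact that the first leg of $E$ sits in $R$ and the second in $L$. Concretely, I would view both (a) and (c) as $(A\ox A\ox \epsilon)$ (respectively, $(A\ox\epsilon\ox A)$) applied to a product of two $E$'s with $a$ inserted, and use coassociativity to move the middle $a$ past one copy of $E$ at the cost of a flip on that copy's legs. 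This produces the required matching of tensor orderings, with the $\epsilon$-pairing absorbed into an $R$-valued factor thanks to the images of the projections lying in $R$.

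Once (a)$=$(b), (c)$=$(d), and (a)$=$(c) are established, the four expressions coincide by transitivity. The technical heart of the proof is thus the coassociativity-type identity used for (a)$=$(c); the other equalities reduce to algebraic relabeling modulo (i) and (ii).
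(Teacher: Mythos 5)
Your handling of (a)$=$(b) is essentially the paper's argument: both sides are the two ways of slicing the triple product $(E\ox 1)(1\ox a\ox 1)(1\ox E)$ by $R\ox\epsilon\ox L$, after which $R\ox\sigma$ turns $(a\ox 1)E$ into $(a\ox 1)F$ and $\overline\sqcap^L$ into $\sqcap^R$. The claim that (c)$=$(d) is the mirror image is also correct in principle (it is exactly what the paper asserts), but note that your bookkeeping for (d) uses the presentation $F(1\ox a)=\sum e_1\ox\sigma(e_2)a$ coming from \eqref{eq:F}, whereas the symmetric argument actually runs through \eqref{eq:F^op}, i.e.\ $F(1\ox a)=(\overline\sigma\ox A)\bigl[[E(a\ox 1)]^{21}\bigr]$, with $\overline\sigma$ rather than $\sigma$; the two formal unfoldings of the multiplier $F$ differ by the Nakayama automorphism and are not interchangeable, so this step needs to be redone on the $\overline\sigma$ side.

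The genuine gap is (a)$=$(c). A first warning sign: $E$ is a multiplier, not an element of $R\ox L$, so the notation $E=\sum e_1\ox e_2$ is not available; if it were, your unfoldings $\sum e_1\ox e_1'\,\epsilon(e_2ae_2')$ and $\sum e_1'\,\epsilon(e_2'ae_2)\ox e_1$ would already coincide by relabelling $e\leftrightarrow e'$ and moving the scalar between tensor factors, and no three-variable identity would be needed at all --- the fact that your own ``main obstacle'' evaporates in your notation shows the notation is concealing the content rather than organizing it. Second, the mechanism you propose to close the gap does not exist: there is no axiom, nor consequence of coassociativity of $\overline\Delta$, that lets you ``move the middle $a$ past one copy of $E$ at the cost of a flip on that copy's legs''; the identities that coassociativity does provide (of the shape $(\overline\Delta\ox\id)(E)=(E\ox 1)(1\ox E)=(1\ox E)(E\ox 1)$) involve neither $E^{13}$ nor any flip, and $a$ does not commute with $E$ up to a flip. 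The paper's proof of (a)$=$(c) is of a different nature: writing $a=ab$ by idempotency of $A$, it identifies (a) with $(\overline\sqcap^R\ox\sqcap^R)T_1(a\ox b)$ via (2.3) of \cite{BoGTLC:wmba} and (c) with $(\overline\sqcap^R\ox\sqcap^R)T_3^{21}(a\ox b)$ via (3.4), and then proves the two auxiliary identities $(A\ox\sqcap^R)T_1(a\ox b)=(A\ox\sqcap^R)[(a\ox 1)F(1\ox b)]$ and $(\overline\sqcap^R\ox A)T_3^{21}(a\ox b)=(\overline\sqcap^R\ox A)[(a\ox 1)F(1\ox b)]$ using the counit axiom, the exchange law $(c\ox 1)T_1(a\ox b)=T_2(c\ox a)(1\ox b)$, non-degeneracy of $A$, and \cite[Proposition 4.3]{BoGTLC:wmba}. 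The comultiplication and counit thus enter essentially, not merely properties of the idempotent $E$; none of this machinery appears in your plan, so the step (a)$=$(c) remains unproved.
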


\begin{proof}
(a)=(b) By \cite[Proposition 2.5~(1)]{BoGTLC:wmba}, $(a\ox 1)E\in A\ox L$ and
by (2.3) in \cite{BoGTLC:wmba}, $E(1\ox a)\in R\ox A$. They obey 
$$ 
(\overline \sqcap^R\ox L)[(a\ox 1)E]\stackrel{\eqref{eq:PiBarR}}=
(R\ox \epsilon \ox L)[(E\ox 1)(1\ox a\ox 1)(1\ox E)]
\stackrel{\eqref{eq:PiBarL}}=
(R\ox \overline \sqcap^L)[E(1\ox a)].
$$
Applying $R\ox \sigma$ (cf. \eqref{eq:sigma}) to both sides and using
\eqref{eq:F}, we obtain (b)=(a). 

(c)=(d) follows symmetrically using \eqref{eq:PiR} and \eqref{eq:PiL} together
with \eqref{eq:F^op}.

(a)=(c) Using axiom (iii) in \cite[Definition 1.1]{Bo:wmba_comod} in the
second equality, (3.6) in \cite{BoGTLC:wmba} in the third equality and
\cite[Proposition 4.3~(1)]{BoGTLC:wmba} in the penultimate one, it follows for
any $a,b,c\in A$ that 
\begin{eqnarray*}
(c\ox 1)((A\ox \sqcap^R)T_1(a\ox b))&=&
(A\ox \sqcap^R)[(c\ox 1)T_1(a\ox b)]\\
&=&
(A\ox \sqcap^R)[T_2(c\ox a)(1\ox b)]\\
&=&
(A\ox \sqcap^R)[((A\ox \sqcap^R)T_2(c\ox a))(1\ox b)]\\
&=&
(A\ox \sqcap^R)[(ca\ox 1)F(1\ox b)]\\
&=&
(c\ox 1)((A\ox \sqcap^R)[(a\ox 1)F(1\ox b)]).
\end{eqnarray*}
Thus using the non-degeneracy of $A$ and simplifying by $c$, $(A\ox
\sqcap^R)T_1(a\ox b)=(A\ox \sqcap^R)[(a\ox 1)F(1\ox b)]$. 
Symmetrically, using (1.6) in \cite{Bo:wmba_comod}, \cite[Lemma
3.2]{BoGTLC:wmba} and \cite[Proposition 4.3~(1)]{BoGTLC:wmba}, it follows
that $(\overline \sqcap^R\ox A)T_3^{21}(a\ox b)=(\overline \sqcap^R\ox
A)[(a\ox 1)F(1\ox b)]$. With these identities at hand,
\begin{eqnarray*}
(R\ox \sqcap^R)[E(1\ox ab)]&=&
(\overline \sqcap^R\ox \sqcap^R)T_1(a\ox b)=
(\overline \sqcap^R\ox \sqcap^R)[(a\ox 1)F(1\ox b)]\\
&=&(\overline \sqcap^R\ox \sqcap^R)T_3^{21}(a\ox b)=
(\overline \sqcap^R \ox R)[(ab\ox 1)E^{21}],
\end{eqnarray*}
from which we conclude by the idempotency of $A$. In the first equality we
used (2.3) in \cite{BoGTLC:wmba} and in the last one we used (3.4) in
\cite{BoGTLC:wmba}. 
\end{proof}

\subsection{Antipode}
The {\em antipode} on a regular weak multiplier bialgebra $A$ is a linear map
$S:A\to \M(A)$ satisfying the conditions in part (2) of \cite[Theorem
6.8]{BoGTLC:wmba}. Then it follows that
\begin{equation}\label{eq:6.14}
\begin{array}{ll}
\mu(S\ox A)T_1=\mu(\sqcap^R \ox A)\qquad 
&\mu(A\ox S)T_2=\mu(A\ox \sqcap^L)\\
\mu(S\ox A)E_1=\mu(S\ox A)\qquad 
&\mu(A\ox S)E_2=\mu(A\ox S),
\end{array}
\end{equation}
see (6.14) in \cite{BoGTLC:wmba}. If the antipode exists then it is
unique. Any weak multiplier bialgebra possessing an antipode is a weak
multiplier Hopf algebra in the sense of \cite{VDaWa:Banach, VDaWa} but not
conversely.

By \cite[Theorem 6.12]{BoGTLC:wmba} $S$ is anti-multiplicative. That is, for
any $a,b\in A$, $S(ab)=S(b)S(a)$. By \cite[Proposition 6.13]{BoGTLC:wmba} $S$
is non degenerate. That is, any element of $A$ can be written as a linear
combination of elements of the form $aS(b)$ and it can be written also as a
linear combination of elements of the form $S(b)a$, in terms of elements
$a,b\in A$. These assertions together with Theorem \ref{thm:extend} imply that
$S$ extends to a unital anti-algebra map $\overline S:\M(A)\to \M(A)$. 
Assuming also that the comultiplication of $A$ is left and right full, it
follows by \cite[Corollary 6.16]{BoGTLC:wmba} that $S$ is
anti-comultiplicative. That is, in terms of the opposite comultiplication
$\Delta^{\mathsf{op}}:A\to \M(A\ox A)$ --- defined by
$\Delta^{\mathsf{op}}(a)=\Delta(a)^{21}$ for any $a\in A$ --- the equality
$\overline \Delta S =\overline{(S\ox S)}\Delta^{\mathsf{op}}$ holds. By
\cite[Lemma 6.14]{BoGTLC:wmba}, the restriction of $\overline S$ to $R$ is
equal to $\overline \sigma\, ^{-1}$ and the restriction of $\overline S$ to
$L$ is equal to $\sigma$.

A {\em regular weak multiplier Hopf algebra} in \cite{VDaWa:Banach, VDaWa} is
the same as a regular weak multiplier bialgebra $A$ with left and right full
comultiplication such that both $A$ and its opposite $A^{\mathsf{op}}$ possess
an antipode. The following is a variant of
\cite[Theorem 4.10]{VDaWa}:

\begin{theorem}\label{thm:reg_wmha}
For a regular weak multiplier bialgebra $A$ with left and right full
comultiplication possessing an antipode $S$, the following are equivalent.
\begin{itemize}
\item[{(a)}] $A$ is a regular weak multiplier Hopf algebra.
\item[{(b)}] $S$ factorizes through a vector space isomorphism $A\to A$ (to be
 denoted by $S$ too) via the embedding $A\to \M(A)$.
\end{itemize}
\end{theorem}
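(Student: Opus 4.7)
The strategy is to exploit the definition of \emph{regular weak multiplier Hopf algebra} — antipodes exist for both $A$ and $A^{\op}$ — and to relate these two antipodes to the single map $S$ and its (candidate) inverse. Throughout I would rely on the antipode identities \eqref{eq:6.14}, on the anti-multiplicativity and anti-comultiplicativity of $S$, and on the identifications $\overline S|_R=\overline\sigma^{\,-1}$ and $\overline S|_L=\sigma$ recalled just before the statement.

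For (a)~$\Rightarrow$~(b): Let $S'$ denote the antipode of $A^{\op}$. Writing out the analogues of \eqref{eq:6.14} for $A^{\op}$ — in which $T_1,T_2$ are replaced by $T_3,T_4$, $\mu$ by $\mu^{\op}$, and $\piR,\piL$ are interchanged with $\pibarR,\pibarL$ — and combining them with \eqref{eq:6.14} itself, I would first show that $S(a)b$ and $bS(a)$ lie in $A$ for all $a,b\in A$. This uses the non-degeneracy of $A$ together with the $T_i$-descriptions of $\Delta$ to rewrite $\overline S(a)b$ as an element of $A$ via an $S'$-expression. Consequently $S$ factorizes through a linear map $A\to A$. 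I would then compute $S'\circ S$ and $S\circ S'$ on products $ab$, using anti-multiplicativity of both maps and the antipode identities on each side, to obtain $S'\circ S=\id_A=S\circ S'$; by idempotency of $A$ this determines the composites on all of $A$, so $S\colon A\to A$ is a bijection.

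For (b)~$\Rightarrow$~(a): Given a vector space isomorphism $S\colon A\to A$, denote its inverse by $S^{-1}\colon A\to A$, viewed as a map $A\to\M(A)=\M(A^{\op})$ via the embedding. I would verify that $S^{-1}$ is an antipode for $A^{\op}$ by checking, one by one, the analogues of \eqref{eq:6.14} with $T_1,T_2$ replaced by $T_3,T_4$, $\mu$ by $\mu^{\op}$, and $\pibarR,\pibarL,\piR,\piL$ permuted as dictated by passing to the opposite algebra. Applying $\overline S$ to each such desired identity and using anti-multiplicativity of $\overline S$, the formula $\overline\Delta\, S=\overline{(S\ox S)}\Delta^{\op}$, and the restrictions $\overline S|_R,\overline S|_L$ recalled earlier, each $A^{\op}$-axiom translates into one of the four identities in \eqref{eq:6.14} for $S$, which holds by hypothesis. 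Uniqueness of the antipode for $A^{\op}$ then yields (a).

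The main obstacle will be the forward direction: since $A$ is non-unital, every manipulation takes place inside $\M(A\ox A)$ and no simplification by a ``unit'' is available, so all cancellations must be justified via non-degeneracy and by testing against a dense subset. Extracting an actual $A$-valued inverse of $S$ from the $A^{\op}$-antipode identities, rather than merely a multiplier-valued one, is the crux; once that is in place, Theorem~\ref{thm:extend} automatically provides the extension to $\overline{S^{-1}}\colon\M(A)\to\M(A)$. A secondary bookkeeping difficulty is tracking how $\sigma,\overline\sigma,\tau,\overline\tau$ intertwine $S$ and $S^{-1}$ on $R$ and $L$ under the passage from $A$ to $A^{\op}$, which is what makes the two halves of the argument match up at the level of the base (co)algebra.
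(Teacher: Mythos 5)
The paper does not actually prove this statement: it is recalled as ``a variant of [VDaWa, Theorem 4.10]'' and, per the declaration at the start of Section \ref{sec:prelims}, no proof from the cited sources is reproduced. So there is no in-paper argument to compare against, and your proposal has to be judged on its own merits. Your (b)$\Rightarrow$(a) outline is consistent with the one piece of evidence the paper does give, namely the remark immediately after the theorem that under (b) the map $S^{-1}$ is the antipode of $A^{\op}$; checking the $A^{\op}$-analogues of \eqref{eq:6.14} for $S^{-1}$ by transporting them through $\overline S$, anti-(co)multiplicativity, and the identifications $\overline S|_R=\overline\sigma^{\,-1}$, $\overline S|_L=\sigma$ is the right kind of bookkeeping, together with uniqueness of the antipode.

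The genuine gap is in (a)$\Rightarrow$(b), at the step ``show that $S(a)b$ and $bS(a)$ lie in $A$ for all $a,b\in A$; consequently $S$ factorizes through a linear map $A\to A$.'' This inference is a non sequitur: $A$ is by construction an ideal in $\M(A)$, so $wb$ and $bw$ lie in $A$ for \emph{every} multiplier $w\in\M(A)$ and every $b\in A$ --- for instance $1\in\M(A)$ satisfies $1\cdot b=b\in A$ for all $b$ without belonging to $A$. Hence no amount of verifying that $S(a)b,\,bS(a)\in A$ can establish that the multiplier $S(a)$ is in the image of the embedding $A\to\M(A)$. What is actually needed is an explicit expression of $S(a)$ as a finite sum of elements of $A$, which in the weak multiplier Hopf setting comes from the construction of the antipode out of the generalized (Von Neumann) inverses of the canonical maps $T_i$ --- whose ranges lie in $A\ox A$ --- combined with the counit; this is precisely the content of [VDaWa, Proposition 4.3] that the forward implication rests on, and it is absent from your plan. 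A secondary soft spot is the claim that computing $S'\circ S$ and $S\circ S'$ ``on products $ab$'' yields the identity: in the non-unital setting this requires a convolution-type uniqueness argument carried out against a dense set of elements, and as stated it is only an intention, not an argument. Until the membership $S(a)\in A$ is secured by a concrete formula, the forward direction does not go through.
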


\noindent
If assertion (b) in Theorem \ref{thm:reg_wmha} holds then $S^{-1}$ defines the
antipode of $A^{\mathsf{op}}$.

\subsection{Modules}\label{sec:module}\ 
Since there is an associative algebra underlying any weak multiplier
bialgebra $A$, there is an evident notion of right $A$-{\em module} $V$ with
an associative action 
$\cdot:V\ox A \to V$: 
$$
v\cdot(ab)=(v\cdot a)\cdot b\qquad \forall v\in V,\ a,b\in A.
$$
A morphism of $A$-modules is a linear map which commutes with the
$A$-actions. 
Similarly to the properties of the multiplication, we say that $V$ is an
{\em idempotent} module if $\cdot:V\ox A \to V$ is surjective (i.e. any
element of $V$ is a linear combination of elements of the form $v\cdot a$, in
terms of $v\in V$ and $a\in A$) and it is {\em non-degenerate} if the map
$$
V\to \mathsf{Lin}(A,V),\qquad
v\mapsto v\cdot (-)
$$
is injective. We consider the category $\mathsf{M}_{(A)}$ whose objects are
the idempotent non-degenerate right $A$-modules and whose morphisms are the
$A$-module maps. 

If $A$ is a regular weak multiplier bialgebra with right full
comultiplication, then any idempotent non-degenerate right $A$-module $V$ was
equipped in \cite[Proposition 5.2]{BoGTLC:wmba} with the structure of an
$R$-bimodule with firm right and left actions:
\begin{equation}\label{eq:w_actions}
(v\cdot a)\wr r:=v\cdot(ar)\qquad
r\wl (v\cdot a):=v\cdot(a \tau(r))\qquad
\forall v\in V, a\in A,\ r\in R,
\end{equation}
where the map \eqref{eq:tau} is used. This defines a functor $U_{(A)}$ ---
acting on the morphisms as the identity map --- from $\mathsf{M}_{(A)}$ to the
category ${}_R \mathsf{M}_R$ of firm $R$-bimodules. Note that ${}_R
\mathsf{M}_R$ is a monoidal category via the $R$-module tensor product as the
monoidal product and the regular $R$-bimodule as the neutral object.
By \cite[Theorem 5.6]{BoGTLC:wmba}, $\mathsf{M}_{(A)}$ admits a monoidal
structure such that $U_{(A)}$ is strict monoidal. 

\subsection{Comodules}\label{sec:comodule}
Applying the philosophy used in \cite{VDaZha:corep_I} for (non-weak)
multiplier Hopf algebras, a {\em comodule} over a regular weak multiplier
bialgebra $A$ is a vector space $M$ equipped with linear maps
$\lambda,\varrho:M\ox A \to M\ox A$ such that 
\begin{itemize}
\item $(1\ox a)\lambda(m\ox b)=\varrho(m\ox a)(1\ox b)$ for all $m\in M$,
 $a,b\in A$, 
\item the same coassociativity condition
$(\lambda\ox A)\lambda^{13}(M\ox T_1)=(M\ox T_1)(\lambda\ox A)$
in \cite{VDaZha:corep_I} holds (for a few more equivalent forms see
\cite[Proposition 2.1]{Bo:wmba_comod}), 
\item together with the normalization
condition 
$(\lambda\ox A)\lambda^{13}(M\ox E_1)=(\lambda\ox A)\lambda^{13}$
(some more equivalent forms can be found again in \cite[Proposition
2.1]{Bo:wmba_comod}). 
\end{itemize}
A morphism of $A$-comodules is a linear map $f:M\to M'$ such that $(f\ox
A)\lambda= \lambda'(f\ox A)$, equivalently, $(f\ox A)\varrho= \varrho'(f\ox
A)$. The triple $(M,\lambda,\varrho)$ is a right comodule over a regular weak
multiplier bialgebra $A$ if and only if $(M,\varrho,\lambda)$ is a right
comodule over the opposite regular weak multiplier bialgebra $A^{\mathsf{op}}$.
Similarly to the comultiplication, a comodule $(M,\lambda,\varrho)$ is said to
be {\em full} if any (hence by \cite[Lemma 4.1]{Bo:wmba_comod} both) of the
subspaces 
\begin{eqnarray*}
&&\langle (M\ox \omega)\lambda(m\ox a)\ |\ 
m\in M,\ a\in A,\ \omega \in \mathsf{Lin}(A,k) \rangle
\quad \textrm{and}\\
&&\langle (M\ox \omega)\varrho(m\ox a)\ |\ 
m\in M,\ a\in A,\ \omega \in \mathsf{Lin}(A,k) \rangle,
\end{eqnarray*}
is equal to $M$. We consider the category $\mathsf{M}^{(A)}$ whose objects are
the full right $A$-comodules and whose morphisms are the $A$-comodule maps. 

If $A$ is a regular weak multiplier bialgebra with a right full
comultiplication, then any full right $A$-comodule $M$ was equipped in
\cite[Theorem 4.5]{Bo:wmba_comod} with the structure of an $R$-bimodule with
firm right and left actions: 
\begin{equation}\label{eq:b_actions}
m\br \overline \sqcap^R(a):=(M\ox \epsilon)\lambda(m\ox a)\qquad
\sqcap^R(a)\bl m:=(M\ox \epsilon)\varrho(m\ox a).
\end{equation}
This defines a functor $U^{(A)}$ --- acting on the morphisms as the identity
map --- from $\mathsf{M}^{(A)}$ to the category ${}_R \mathsf{M}_R$ of firm
$R$-bimodules. By \cite[Theorem 5.7]{Bo:wmba_comod}, $\mathsf{M}^{(A)}$ admits
a monoidal structure such that $U^{(A)}$ is strict monoidal. 

If $A$ is a regular weak multiplier bialgebra with a left and right full
comultiplication and it possesses an antipode, then it was proven in
\cite[Theorem 6.7]{Bo:wmba_comod} that any finite dimensional object in
$\mathsf{M}^{(A)}$ possesses a dual.

\begin{lemma}\label{lem:comod-mod_idempotent}
Let $A$ be a regular weak multiplier bialgebra with right full
comultiplication. Let $V$ be an idempotent non-degenerate right $A$-module
regarded as a firm $R$-bimodule as in \eqref{eq:w_actions}. Let $M$ be a full
right $A$-comodule regarded as a firm $R$-bimodule as in 
\eqref{eq:b_actions}.
\begin{itemize}
\item[{(1)}] The idempotent map $\iota_{M,V}\pi_{M,V}:M\ox V\to M\ox V$ satisfies
$$ 
\iota_{M,V}\pi_{M,V}(m\ox v\cdot a)=
(m\br(-)\ox v\cdot (-))((1\ox a)E),\qquad \forall m\in M,\ v\in V,\ a\in A.
$$
\item[{(2)}] The idempotent map $\iota_{V,M}\pi_{V,M}:V\ox M\to V\ox M$ satisfies
$$ 
\iota_{V,M}\pi_{V,M}(v\cdot a \ox m)=
(v\cdot (-)\ox (-)\bl m)((a\ox 1)F),\qquad \forall m\in M,\ v\in V,\ a\in A.
$$
\end{itemize}
\end{lemma}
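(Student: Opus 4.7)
The plan is to verify part (1) by establishing two facts: (i) $\pi_{M,V}$ applied to the right-hand side yields $m\ox_R(v\cdot a)$, and (ii) the right-hand side already lies in the image of $\iota_{M,V}$. Since $\iota_{M,V}\pi_{M,V}$ is the idempotent projection onto the image of $\iota_{M,V}$, these two facts together force the desired equality. Part (2) will then be proved by an analogous argument, with $F$ in place of $E$ and the roles of left and right interchanged throughout.

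To make literal sense of the right-hand side, I first check that $(1\ox a)E\in R\ox A$. This is the regularity-dual of the relation $E(1\ox a)\in R\ox A$ from (2.3) of \cite{BoGTLC:wmba}: in the opposite weak multiplier bialgebra $A^{\op}$ (which exists by regularity) the roles of $E_1$ and $E_2$ are swapped, so $(1\ox a)E$ is the analog of $E(1\ox a)$ computed in $A^{\op}$, and the corresponding range in $A^{\op}$ coincides with $\piR(A)=R$. Writing $(1\ox a)E=\sum_i r_i\ox a_i$ accordingly, the right-hand side becomes $\sum_i(m\br r_i)\ox(v\cdot a_i)$.

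For (i), the $R$-balancing in $M\ox_R V$ combined with $r\wl(v\cdot a')=v\cdot(a'\tau(r))$ from \eqref{eq:w_actions} reduces the problem to showing $m\ox_R\sum_i v\cdot(a_i\tau(r_i))=m\ox_R(v\cdot a)$. This in turn follows from a multiplier-level identity relating $(1\ox a)E$, the anti-multiplicative map $\tau$ of \eqref{eq:tau} and the multiplication in $A$, which translates the defining relation $\tau\piR=\pibarL$ together with the counit-type identities for $E$ recorded in \cite{BoGTLC:wmba}. For (ii), I verify that $\sum_i(m\br r_i)\ox(v\cdot a_i)$ satisfies the $R$-balancing condition $\sum_i(m\br(r_i r))\ox(v\cdot a_i)=\sum_i(m\br r_i)\ox(r\wl(v\cdot a_i))$ for every $r\in R$; after applying \eqref{eq:w_actions}, this reduces to an identity of the form $(1\ox a)E(r\ox 1)=(1\ox a)E(1\ox\tau(r))$ (tested against $m\br(-)\ox v\cdot(-)$), which again encodes the compatibility between $\tau$ and $E$ and is a consequence of the formula $\delta\piR(ab)=(\piR\ox\piR)T_2(a\ox b)$ from Section \ref{sec:prelims}.

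Part (2) is treated in parallel: $(a\ox 1)F\in A\ox R$ follows directly from \eqref{eq:F}, since $(a\ox 1)E\in A\ox L$ and $\sigma:L\to R$; the same two verifications go through with $\sigma$ replacing $\tau$, with $\bl$ replacing $\wl$, and with the identities for $F$ replacing those for $E$. The main obstacle is the bookkeeping in (i) and (ii): cleanly extracting the needed multiplier identities from the defining relations among $E$, $F$, $\piR$, $\pibarR$, $\tau$, $\sigma$ and $\delta$ compiled in \cite{BoGTLC:wmba}; once these identities are identified, the verification is mechanical.
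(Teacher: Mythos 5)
Your overall strategy --- show that $\pi_{M,V}$ of the right-hand side is $m\ox_R (v\cdot a)$ and that the right-hand side lies in the image of $\iota_{M,V}$, then conclude from $\pi_{M,V}\iota_{M,V}=\id$ --- is logically sound \emph{if} both sub-claims are established, but your method for the second sub-claim does not work. You propose to prove that $\sum_i (m\br r_i)\ox (v\cdot a_i)$ lies in the image of $\iota_{M,V}$ by checking the naive $R$-balancing condition $\sum_i m\br (r_i r)\ox v\cdot a_i=\sum_i m\br r_i\ox r\wl(v\cdot a_i)$. But the image of the idempotent $\iota_{M,V}\pi_{M,V}$ is the cotensor product of the $R$-comodules corresponding to $M$ and $V$ (spanned by elements $m\br r_1\ox r_2\wl v$ with $\delta(r)=r_1\ox r_2$), and since $R$ is a coseparable coalgebra whose Nakayama automorphism $\vartheta=\sigma\overline\sigma^{\,-1}$ is in general nontrivial, elements of that image satisfy a $\vartheta$-twisted balancing condition ($\varepsilon(rs)=\varepsilon(\vartheta(s)r)$ enters when one converts coactions back to actions), not the naive one. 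Concretely: for $z=\sum_j m_j\ox v_j$ with a local unit $e$ for the $m_j$, one computes $\iota_{M,V}\pi_{M,V}(z)=\sum_j m_j\br e_1\ox e_2\wl v_j$, and applying naive balancing only yields $\sum_j m_j\ox (e_2e_1)\wl v_j$; since $\mu^{\op}\delta(e)=e_2e_1$ need not act as $e$ does (only $\mu\delta=\id$ is guaranteed), you cannot conclude $\iota_{M,V}\pi_{M,V}(z)=z$. So checking balancedness neither characterizes membership in $\mathrm{im}\,\iota_{M,V}$ nor is it the condition you would need.

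A second, independent weakness is that the multiplier identities on which both of your steps rest --- e.g.\ that $\mu(A\ox\tau)[(a\ox 1)E^{21}]=a$ for step (i), and the compatibility of $(1\ox a)E$ with $\tau$ for step (ii) --- are asserted as ``translations'' of relations in \cite{BoGTLC:wmba} but never derived; these identities are essentially the entire content of the lemma, so deferring them leaves the proof empty. The paper avoids both problems by arguing in the opposite direction: it computes $\iota_{M,V}\pi_{M,V}$ directly on the spanning set $m\br\piR(cd)\ox v\cdot a$ using the explicit formula $\iota_{M,V}(m\br r\ox_R v)=(m\br(-)\ox(-)\wl v)\delta(r)$ together with $\delta\piR(cd)=(\piR\ox\piR)T_2(c\ox d)$, and then converts the result into $(m\br(-)\ox v\cdot(-))((1\ox a)E)$ via \cite[Proposition 2.5~(1)]{BoGTLC:wmba} and \cite[Lemma 1.3~(3)]{Bo:wmba_comod}; part (2) is even shorter because $\delta(r)=(r\ox 1)F$ gives the claim almost immediately. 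If you want to keep your two-step architecture, you would have to replace the balancing check by an explicit presentation of the right-hand side as $\iota_{M,V}$ applied to some element of $M\ox_R V$ --- which is, in effect, the paper's computation.
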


\begin{proof}
(1) For any $a\in A$, we have $(1\ox a)E\in R\ox A$ by (3.4) in
\cite{BoGTLC:wmba}. So the right hand side of the equality in the claim is 
meaningful. Since $A$ is idempotent and $M$ is a firm right $R$-module, $M$ is
spanned by elements of the form $m\br \sqcap^R(cd)$, for $m\in M$ and
$c,d\in A$. For such elements,
\begin{eqnarray*}
\iota_{M,V}\pi_{M,V}(m\br \sqcap^R(cd)\ox v\cdot a)&=&
(m\br (-) \ox (-)\wl (v\cdot a))[(\sqcap^R \ox \sqcap^R)T_2(c\ox d)]\\
&=&(m\br (-) \ox v\cdot (-))
[(1\ox a)((\sqcap^R \ox \overline \sqcap^L)T_2(c\ox d))]\\
&=&(m\br (-) \ox v\cdot (-))[(\sqcap^R \ox A)(cd\ox a)E]\\
&=&(m\br \sqcap^R(cd)(-) \ox v\cdot (-))[(1\ox a)E].
\end{eqnarray*}
The first equality follows by (4.1) in \cite{BoGTLC:wmba}, the penultimate one
follows by \cite[Proposition 2.5~(1)]{BoGTLC:wmba}, and the last one does by
\cite[Lemma 1.3~(3)]{Bo:wmba_comod}.

(2) For any $a\in A$, we have $(a\ox 1)F\in A\ox R$ by \cite[Proposition
4.3~(1)]{BoGTLC:wmba}. So the right hand side of the equality in the claim
is meaningful. By \cite[Proposition 4.3~(3)]{BoGTLC:wmba}, $\delta(r)=(r\ox
1)F$ for any $r\in R$. Hence
$$
\iota_{V,M}\pi_{V,M}(v\cdot ar \ox m)=
((v\cdot a)\wr (-) \ox (-)\bl m)\delta(r)=
(v\cdot (-)\ox (-)\bl m)((ar\ox 1)F).
$$
By surjectivity of the right $R$-action on $A$ (cf. \cite[Lemma
3.7~(4)]{BoGTLC:wmba}) this proves the claim.
\end{proof}

\subsection{Commutants of strict monoidal functors}\label{sec:commutant}
The following notion due to Majid in \cite{Majid} generalizes the weak center
of a monoidal category in \cite{Scha:DuDoQgp}, which generalizes the center
construction in \cite{JoyStr}. 

\begin{definition}
For monoidal categories $(\mathsf{M},\ox, I)$ and $(\mathsf{M}',\ox',I')$, and
a strict monoidal functor $U:\mathsf{M}\to \mathsf{M}'$, the {\em right
commutant} of $U$ is the following category $\mathsf{M}^U$: The objects are
pairs consisting of an object $P$ of $\mathsf{M}'$ and a natural transformation
$\sigma: U(-)\ox' P \to P\ox' U(-)$ which is compatible with the monoidal
structures in the sense of the following commutative diagrams. 
$$
\xymatrix@R=15pt@C=28pt{
P\ar@{=}[r]\ar[dd]_-\cong&
P\ar[dd]^-\cong
&
UV\ox'(UW\ox' P)\ar[r]^-{UV\ox' \sigma_W}\ar[dd]_-\cong&
UV\ox'(P\ox' UW)\ar[d]^-\cong\\
&&&
(UV\ox'P)\ox' UW\ar[d]^-{\sigma_V \ox' UW}\\
I'\ox' P\ar@{=}[dd]&
P\ox' I'\ar@{=}[dd]
&
(UV\ox' UW)\ox' P\ar@{=}[dd]&
(P\ox' UV)\ox' UW\ar[d]^-\cong\\
&&&
P\ox' (UV\ox' UW)\ar@{=}[d]\\
UI\ox' P\ar[r]_-{\sigma_I}&
P\ox' UI
&
U(V\ox W)\ox' P\ar[r]_-{\sigma_{V\ox W}}&
P\ox' U(V\ox W)}
$$
Here the arrows carrying the label $\cong$ stand for Mac Lane's coherence
isomorphisms (not to be explicitly denoted later on). The morphisms
$(P,\sigma)\to (P',\sigma')$ are morphisms $f:P\to P'$ in $\mathsf{M}'$
rendering commutative
$$
\xymatrix{
U(-)\ox' P\ar[r]^-\sigma\ar[d]_-{U(-)\ox' f}&
P\ox' U(-)\ar[d]^-{f\ox' U(-)}\\
U(-)\ox' P'\ar[r]_-{\sigma'}&
P'\ox' U(-).}
$$
The {\em left commutant} ${}^U \mathsf{M}$ of $U$ is defined symmetrically. 
Its objects are pairs consisting of an object $Q$ of $\mathsf{M}'$ and a
natural transformation $\vartheta: Q\ox' U(-)\to U(-)\ox' Q$ satisfying the
evident compatibility conditions with the monoidal structures. The morphisms
are morphisms in $\mathsf{M}'$ which commute with the structure morphisms
$\vartheta$. 
\end{definition}

\begin{theorem}\cite[Theorem 3.3]{Majid}
For any monoidal categories $(M,\ox, I)$ and $(M',\ox',I')$, and a strict
monoidal functor $U:M\to M'$, the left and right commutants of $U$ are
monoidal categories admitting a strict monoidal forgetful functor to $M'$.
\end{theorem}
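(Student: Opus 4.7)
The plan is to give explicit constructions of the monoidal structures on $\mathsf{M}^U$ and ${}^U\mathsf{M}$ and then verify the axioms. I focus on the right commutant $\mathsf{M}^U$; the left case is dual.

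First I would define the tensor product of objects. Given $(P,\sigma)$ and $(P',\sigma')$ in $\mathsf{M}^U$, set $(P,\sigma)\ox(P',\sigma'):=(P\ox'P',\sigma'')$, where the structure morphism is the composite
$$
\sigma''_V\ : \ UV\ox'(P\ox'P')\cong (UV\ox'P)\ox'P'\xrightarrow{\sigma_V\ox'P'}
(P\ox'UV)\ox'P'\cong P\ox'(UV\ox'P')\xrightarrow{P\ox'\sigma'_V}P\ox'(P'\ox'UV)\cong (P\ox'P')\ox'UV,
$$
using the coherence isomorphisms of $\mathsf{M}'$. Naturality of $\sigma''$ in $V$ follows from naturality of $\sigma$ and $\sigma'$, and the two compatibility diagrams for $\sigma''$ reduce, after applying Mac Lane's coherence theorem, to a diagram chase combining the corresponding diagrams for $\sigma$ and $\sigma'$. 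The unit object is $(I',\mathsf{id})$ where the structure morphism is the canonical isomorphism $UV\ox' I'\cong I'\ox' UV$; since $U$ is strict monoidal ($UI=I'$), this trivially satisfies the axioms. The tensor product of two morphisms $f:(P,\sigma)\to(Q,\tau)$ and $f':(P',\sigma')\to(Q',\tau')$ is declared to be $f\ox' f'$; compatibility with the new structure morphisms follows directly from compatibility of $f$ with $\sigma,\tau$ and of $f'$ with $\sigma',\tau'$, composed according to the definition of $\sigma''$.

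Next I would verify that the associativity and unit constraints of $\mathsf{M}'$ lift to $\mathsf{M}^U$. Concretely, I would show that Mac Lane's associator $\alpha:(P\ox'P')\ox'P''\to P\ox'(P'\ox'P'')$ is a morphism in $\mathsf{M}^U$ between $((P,\sigma)\ox(P',\sigma'))\ox(P'',\sigma'')$ and $(P,\sigma)\ox((P',\sigma')\ox(P'',\sigma''))$, and similarly for the left and right unitors. In both cases the required commutative square reduces, using naturality of $\alpha$ (resp.\ of the unitors) and the coherence theorem, to a purely formal identity of composites of associators. The pentagon and triangle axioms in $\mathsf{M}^U$ then hold because they hold after forgetting to $\mathsf{M}'$ and the forgetful functor is faithful.

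Finally I would define the forgetful functor $\mathsf{M}^U\to\mathsf{M}'$, $(P,\sigma)\mapsto P$, $f\mapsto f$. By construction it sends the tensor product of objects in $\mathsf{M}^U$ to the tensor product of their underlying objects in $\mathsf{M}'$, it sends the unit to the unit, and it transports the associator and unitors identically; hence it is strict monoidal.

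The main obstacle, as usual, is the verification that $\sigma''$ satisfies the hexagonal compatibility condition. The cleanest way is to paste the two hexagons for $\sigma$ and $\sigma'$ along a common edge and apply the pentagon/coherence theorem of $\mathsf{M}'$ to the resulting diagram of associators, so that the combinatorics reduces to the observation that each instance of $UV\ox'UW$ eventually meets the $\sigma'_V\ox' P$ and $\sigma'_W\ox' P$ pieces in the correct order. The left commutant ${}^U\mathsf{M}$ is handled by the same argument applied to the reversed natural transformations $\vartheta_V:Q\ox'UV\to UV\ox'Q$.
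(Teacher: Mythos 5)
Your construction is correct and coincides with the one the paper (following Majid) records right after the theorem statement: the monoidal product of $(P,\sigma)$ and $(P',\sigma')$ is $P\ox' P'$ with structure morphism $(\sigma_V\ox' P')(P\ox'\sigma'_V)$ up to associators, and the unit is $I'$ with the unitor-built half-braiding. The remaining verifications (hexagon pasting, lifting the coherence isomorphisms, strictness of the forgetful functor) are the standard ones, so this is essentially the same proof.
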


For later reference, we recall that the monoidal product of two objects
$(P,\sigma)$ and $(P',\sigma')$ in $\mathsf{M}^U$ is 
$$
(P\ox' P', 
\xymatrix{
U(-)\ox' P\ox' P'
\ar[r]^-{\sigma\ox' P'}& 
P\ox' U(-)\ox' P'
\ar[r]^-{P\ox' \sigma'}& 
P\ox' P'\ox' U(-)})
$$
where we omitted the associativity constraints. The monoidal unit is $R'$ via
the natural transformation built up from the unitors in $\mathsf{M}'$.

The (left or right) commutant of the identity functor on a monoidal category
$\mathsf{M}$ is known as the (left or right) {\em weak center}
\cite{Scha:DuDoQgp}. The left weak center is denoted by
${}^{\mathsf{M}}\mathsf{M}$ and the right weak center is denoted by
$\mathsf{M}^{\mathsf{M}}$. Any strict monoidal functor $U:\mathsf{M}\to
\mathsf{M}'$ lifts to strict monoidal forgetful functors
$\mathsf{M}^{\mathsf{M}}\to \mathsf{M}^U$ and ${}^{\mathsf{M}}\mathsf{M}\to 
{}^U\mathsf{M}$; to be denoted by $U$ again. The {\em center} of a monoidal
category \cite{JoyStr} is the full subcategory of the left weak center for
whose objects $(Q,\vartheta)$ the $\vartheta$ is a natural isomorphism. 

\section{Some embeddings of the module and comodule categories}\label{sec:embed}

If $A$ is a usual weak bialgebra possessing a unit as in \cite{Nill,WHAI},
then both the category of (say, right) $A$-modules and the category of (say,
right) $A$-comodules are monoidal, admitting strict monoidal forgetful
functors to the bimodule category of the so-called base subalgebra (see
e.g. \cite{BoCaJa}). So one can look at the commutants (in the sense recalled
in Section \ref{sec:commutant}) of these forgetful functors. Generalizing the
results in \cite{Majid}, one can see that the left commutant of the forgetful
functor from the category of right $A$-comodules is isomorphic to the category
of right $A$-modules. Symmetrically, the right commutant of the forgetful
functor from the category of right $A$-modules is isomorphic to the category
of right $A$-comodules. 

Such isomorphisms do not seem to hold for a weak multiplier bialgebra
$A$. Instead, the aim of this section is to show that if $A$ is regular and
its comultiplication is right full, then there is a fully faithful functor
from the category $\mathsf{M}_{(A)}$ of idempotent non-degenerate right
$A$-modules in Section \ref{sec:module} to the left commutant of the functor
$U^{(A)}$ in Section \ref{sec:comodule} from the category $\mathsf{M}^{(A)}$
of full right $A$-comodules. Symmetrically, there is a fully faithful functor
from the category $\mathsf{M}^{(A)}$ of full right $A$-comodules in Section 
\ref{sec:comodule} to the right commutant of the functor $U_{(A)}$ in Section
\ref{sec:module} from the category $\mathsf{M}_{(A)}$ of idempotent
non-degenerate right $A$-modules. 

\begin{example}\label{ex:mod}
Let $A$ be a regular weak multiplier bialgebra with right full
comultiplication. Then by \cite[Theorem 5.6]{BoGTLC:wmba}, the idempotent
non-degenerate right $A$-modules constitute a monoidal category
$\mathsf{M}_{(A)}$ admitting a strict monoidal functor $U_{(A)}$ to the
category ${}_R \mathsf{M}_R$ of firm bimodules over the base algebra
$R:=\sqcap^R(A)=\overline \sqcap^R(A)$. The right commutant of $U_{(A)}$ is
the following category $(\mathsf{M}_{(A)})^{U_{(A)}}$. An object is given by a
firm $R$-bimodule $P$ and an $R$-bimodule map $\sigma_V:V\ox_R P \to P\ox_R V$
for any idempotent non-degenerate right $A$-module $V$, such that the
following hold. 
\begin{itemize}
\item Compatibility with the monoidal unit; that is,\\
$\sigma_R(r\ox_R p\cdot s)=r\cdot p \ox_R s$
for all $p\in P$, $r,s\in R$.
\item Compatibility with the monoidal product; that is,\\
$\sigma_{V\ox_R W}=(\sigma_V \ox_R W)(V\ox_R \sigma_W)$,
for all $V,W\in \mathsf{M}_{(A)}$.
\item Naturality; that is,\\
$\sigma_{V'}(g\ox_R P)=(P\ox_R g)\sigma_V$,
for all $g\in \mathsf{Hom}_{(A)}(V,V')$.
\end{itemize} 
A morphism $(P,\sigma)\to (P',\sigma')$ is an $R$-bimodule map $u:P\to P'$
such that 
$$
\sigma'_V(V\ox_R u)=(u\ox_R V)\sigma_V,\qquad
\textrm{for all}\ V\in \mathsf{M}_{(A)}.
$$
\end{example}

\begin{example}\label{ex:comod}
Let $A$ be a regular weak multiplier bialgebra with right full
comultiplication. Then by \cite[Theorem 5.7]{Bo:wmba_comod}, the full right
$A$-comodules constitute a monoidal category $\mathsf{M}^{(A)}$ admitting a
strict monoidal functor $U^{(A)}$ to the category ${}_R \mathsf{M}_R$ of firm
bimodules over the base algebra $R:=\sqcap^R(A)=\overline \sqcap^R(A)$. The
left commutant of $U^{(A)}$ is the following category ${}^{U^{(A)}}
(\mathsf{M}^{(A)})$. An object is given by a firm $R$-bimodule $Q$ and an
$R$-bimodule map $\vartheta_M:Q\ox_R M \to M\ox_R Q$ for any full right
$A$-comodule $M$, such that the following hold. 
\begin{itemize}
\item Compatibility with the monoidal unit; that is,\\
$\vartheta_R(s\cdot q \ox_R r)=s \ox_R q\cdot r$,
for all $q\in Q$, $r,s\in R$.
\item Compatibility with the monoidal product; that is,\\
$\vartheta_{M\ox_R N}=(M\ox_R \vartheta_N)(\vartheta_M \ox_R N)$,
for all $M,N\in \mathsf{M}^{(A)}$.
\item Naturality; that is,\\
$\vartheta_{M'}(Q\ox_R f)=(f\ox_R Q)\vartheta_M$,
for all $f\in \mathsf{Hom}^{(A)}(M,M')$.
\end{itemize}
A morphism $(Q,\vartheta)\to (Q',\vartheta')$ is an $R$-bimodule map $u:Q\to
Q'$ such that 
$$
\vartheta'_M(u\ox_R M)=(M\ox_R u)\vartheta_M,\qquad
\textrm{for all}\ M\in \mathsf{M}^{(A)}.
$$
\end{example}

In order to give full embeddings of $\mathsf{M}^{(A)}$ into the category
$(\mathsf{M}_{(A)})^{U_{(A)}}$ in Example \ref{ex:mod}, and of
$\mathsf{M}_{(A)}$ into the category ${}^{U^{(A)}}(\mathsf{M}^{(A)})$ in Example 
\ref{ex:comod}, some preparation is needed. 

\begin{lemma}\label{lem:phi}
Let $A$ be a regular weak multiplier bialgebra with right full
comultiplication. For any idempotent non-degenerate right $A$-module
$(V,\cdot)$ and any full right $A$-comodule $(M,\lambda,\varrho)$, there is an 
$R$-bimodule map 
$$
\varphi_{V,M}:V\ox M \to M\ox V, 
\qquad v\cdot a \ox m \mapsto (M\ox v\cdot(-))\varrho(m\ox a)
$$
obeying the following properties.
\begin{itemize}
\item[{(1)}] For the idempotent $\iota_{M,V}\pi_{M,V}$ in Lemma
\ref{lem:comod-mod_idempotent}~(1),
$\iota_{M,V}\pi_{M,V}\varphi_{V,M}=\varphi_{V,M}$. 
\item[{(2)}] It is $R$-balanced in the sense that
$\varphi_{V,M}(v\wr r \ox m)=\varphi_{V,M}(v \ox r \bl m)$, for any $v\in V$,
$m\in M$ and $r\in R$.
\item[{(3)}] For any $g\in \mathsf{Hom}_{(A)}(V,V')$, $\varphi_{V',M}(g\ox
 M)=(M\ox g)\varphi_{V,M}$.
\item[{(4)}] For any $f\in \mathsf{Hom}^{(A)}(M,M')$, $\varphi_{V,M'}(V\ox
 f)=(f \ox V)\varphi_{V,M}$.
\end{itemize}
\end{lemma}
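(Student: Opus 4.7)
The plan is to establish the lemma in four stages: well-definedness of $\varphi_{V,M}$, the $R$-bimodule property, the naturality statements (3) and (4), and finally the two properties (1) and (2).

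\emph{Well-definedness.} Since the $A$-action on $V$ is surjective, every element of $V\ox M$ is a sum of simple tensors $v\cdot a\ox m$, and the prescription is well-defined if and only if the assignment $v\ox a\ox m\mapsto (M\ox v\cdot(-))\varrho(m\ox a)$ vanishes on the kernel of $\cdot\ox M\colon V\ox A\ox M\to V\ox M$. I therefore assume $\sum_k v_k\cdot a_k=0$ and apply $M\ox(-)\cdot b$ to $X:=\sum_k (M\ox v_k\cdot(-))\varrho(m\ox a_k)$. Using the defining compatibility $(1\ox a)\lambda(m\ox b)=\varrho(m\ox a)(1\ox b)$, one rewrites the result as $(M\ox(\sum_k v_k\cdot a_k)\cdot(-))\lambda(m\ox b)$, which vanishes. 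Writing $X=\sum_l m'_l\ox w_l$ with $\{m'_l\}$ linearly independent in $M$ forces $w_l\cdot b=0$ for every $b\in A$; non-degeneracy of the $A$-action on $V$ then yields $w_l=0$, hence $X=0$.

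\emph{$R$-bimodule map and naturality.} A direct computation using the explicit formulas \eqref{eq:w_actions} and \eqref{eq:b_actions} for the induced $R$-actions on $V$ and $M$, together with the definitions of $\tau$, $\sqcap^R$, $\overline\sqcap^R$, shows that $\varphi_{V,M}$ intertwines the outer left and right $R$-actions on $V\ox M$ and $M\ox V$. For part (3), an $A$-module map $g\colon V\to V'$ satisfies $g(v\cdot a)=g(v)\cdot a$, hence $\varphi_{V',M}(g\ox M)=(M\ox g)\varphi_{V,M}$ on each element of the form $v\cdot a\ox m$. Part (4) follows analogously from the defining intertwining property $(f\ox A)\varrho=\varrho'(f\ox A)$ of a comodule morphism.

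\emph{Properties (1) and (2).} For (1), writing $\varrho(m\ox a)=\sum_i m^a_i\ox a^a_i$, Lemma \ref{lem:comod-mod_idempotent}~(1) expresses $\iota_{M,V}\pi_{M,V}\varphi_{V,M}(v\cdot a\ox m)$ as $\sum_i (m^a_i\br(-)\ox v\cdot(-))(E(1\ox a^a_i))$. That this coincides with $\sum_i m^a_i\ox v\cdot a^a_i=\varphi_{V,M}(v\cdot a\ox m)$ I would derive from an equivalent reformulation of the normalization condition $(\lambda\ox A)\lambda^{13}(M\ox E_1)=(\lambda\ox A)\lambda^{13}$ listed in \cite[Proposition 2.1]{Bo:wmba_comod}, asserting that $\varrho$ already produces an element absorbed by the $E$-projection on the middle factor. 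For (2), taking $v=v_0\cdot a$ and $r=\sqcap^R(c)$ reduces the $R$-balancing to the identity $(M\ox v_0\cdot(-))\varrho(m\ox a\sqcap^R(c))=(M\ox v_0\cdot(-))\varrho((M\ox\epsilon)\varrho(m\ox c)\ox a)$; I would verify this from an equivalent form of the coassociativity of $(M,\lambda,\varrho)$ together with the formula \eqref{eq:PiR} expressing $\sqcap^R$ through $E$ and $\epsilon$.

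The main obstacle is well-definedness, where both the non-degeneracy of the $A$-action on $V$ and the intertwining between $\lambda$ and $\varrho$ enter essentially; property (1) is the second substantive point, requiring the non-unital normalization condition of the comodule $(M,\lambda,\varrho)$.
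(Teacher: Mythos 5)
Your proposal follows essentially the same route as the paper: well-definedness via the compatibility $(1\ox a)\lambda(m\ox b)=\varrho(m\ox a)(1\ox b)$ together with non-degeneracy of the $A$-action on the second factor of $M\ox V$ (which the paper cites from \cite{JaVe} and you prove inline by a linear-independence argument), direct one-line computations for (3) and (4), and reduction of the $R$-bimodule property and of parts (1) and (2) to precisely the identities that the paper imports from \cite[Lemmas 4.9 and 4.10]{Bo:wmba_comod} (absorption of the image of $\varrho$ by the canonical idempotent, and the balancing identity $\varrho(m\ox a\sqcap^R(c))=\varrho((M\ox\epsilon)\varrho(m\ox c)\ox a)$). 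One small correction: in part (1), Lemma \ref{lem:comod-mod_idempotent}~(1) yields $(m^\varrho\br(-)\ox v\cdot(-))((1\ox a^\varrho)E)$, not $E(1\ox a^\varrho)$ --- since $E$ is not central in $\M(A\ox A)$ the order matters when you invoke the absorption property of $\varrho$.
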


\begin{proof}
In order to prove that $\varphi_{V,M}$ is a well-defined linear map, we need
to show that it takes a zero element to zero. So assume that $v\cdot a \ox
m=0$. Then for any $b\in A$ 
\begin{eqnarray*}
0&=&(M\ox v\cdot a(-))\lambda(m\ox b)
=(M\ox v\cdot (-))[(1\ox a)\lambda(m\ox b)]\\
&=&(M\ox v\cdot (-))[\varrho(m\ox a)(1\ox b)]
=[(M\ox v\cdot(-))\varrho(m\ox a)](1\ox b),
\end{eqnarray*}
where the third equality follows by the compatibility
condition (2.10) in \cite{Bo:wmba_comod}. Since $V$ is a non-degenerate right
$A$-module, so is $M\ox V$ by \cite[Lemma 1.11]{JaVe}. So we can simplify by
$b$ and conclude that $(M\ox v\cdot(-))\varrho(m\ox a)=0$ and hence
$\varphi_{V,M}$ is a well-defined linear map. It is a left and right
$R$-module map by \cite[Lemma 4.9~(4) and (6)]{Bo:wmba_comod}, respectively.

(1) For any $v\in V$, $m\in M$ and $a\in A$, using the implicit summation
index notation $\varrho(m\ox a)=:m^\varrho\ox a^\varrho$,
\begin{eqnarray*}
\iota_{M,V}\pi_{M,V}\varphi_{V,M}(v\cdot a \ox m)
&=&(m^\varrho\br(-) \ox v\cdot (-))((1\ox a^\varrho)E)\\
&=&m^\varrho\ox v\cdot a^\varrho
=\varphi_{V,M}(v\cdot a \ox m).
\end{eqnarray*}
The first equality follows by Lemma \ref{lem:comod-mod_idempotent}~(1) and the
second one follows by \cite[Lemma 4.10~(3)]{Bo:wmba_comod}. 

(2) follows by \cite[Lemma 4.9~(3)]{Bo:wmba_comod}. 

(3) For any $g\in \mathsf{Hom}_{(A)}(V,V')$, $v\in V$, $m\in M$ and $a\in A$,
\begin{eqnarray*}
\varphi_{V',M}(g\ox M)&&\hspace{-1cm}(v\cdot a \ox m)
=\varphi_{V',M}(g(v)\cdot a \ox m)
=(M\ox g(v)\cdot (-))\varrho(m\ox a))\\
&=&(M\ox g)((M\ox v\cdot(-))\varrho(m\ox a))
=(M\ox g)\varphi_{V,M}(v\cdot a\ox m).
\end{eqnarray*} 

(4) For any $f \in \mathsf{Hom}^{(A)}(M,M')$, $v\in V$, $m\in M$ and $a\in A$,
\begin{eqnarray*}
\varphi_{V,M'}(V\ox f)&&\hspace{-1cm}(v\cdot a\ox m)
=(M'\ox v\cdot (-))(\varrho'(f\ox A)(m\ox a))\\
&=&(f\ox v\cdot(-))\varrho(m\ox a))
=(f\ox V)\varphi_{V,M}(v\cdot a\ox m).
\end{eqnarray*} 
\end{proof}

\begin{example}\label{ex:phi_AM}
For a regular weak multiplier bialgebra $A$ with right full comultiplication,
for any full right $A$-comodule $(M,\lambda,\varrho)$ and the right $A$-module
$A$ with action provided by the multiplication,
$$
\varphi_{A,M}(ba\ox m)=
(1\ox b)\varrho(m\ox a)=
\varrho(m\ox ba),
$$
for any $m\in M$ and $a,b\in A$, where in the second equality we used that
$\varrho$ is a left $A$-module map, cf. \cite[Proposition
2.1~(1)]{Bo:wmba_comod}. Hence by the idempotency of $A$,
$\varphi_{A,M}(a\ox m)=\varrho(m\ox a)$.
\end{example}

\begin{example}\label{ex:phi_VA}
For a regular weak multiplier bialgebra $A$ with right full comultiplication,
for any idempotent non-degenerate right $A$-module $(V,\cdot)$ and the right
$A$-comodule $(A,T_1,T_3)$, 
$$
\varphi_{V,A}(v\cdot a\ox b)=
(A\ox v\cdot(-))T_3(b\ox a),
$$
for any $v\in V$ and $a,b\in A$. 
\end{example}

Under the assumptions and using the notation in Lemma \ref{lem:phi}, it
follows from Lemma \ref{lem:phi}~(2) that $\varphi_{V,M}$ projects to an
$R$-bimodule map
\begin{equation}\label{eq:projected}
\widehat \varphi_{V,M}:V\ox_R M \to M\ox_R V, 
\quad v\cdot a \ox_R m \mapsto 
\pi_{M,V}(M\ox v\cdot(-))\varrho(m\ox a)=
m^\varrho\ox_R v\cdot a^\varrho
\end{equation}
where $\pi_{M,V}:M\ox V \to M\ox_R V$ is the canonical epimorphism and we used
the notation $\varrho(m\ox a)=:m^\varrho \ox a^\varrho$ with implicit
summation understood. In other words, $\widehat \varphi_{V,M}$ is defined by
the equality $\widehat \varphi_{V,M}\pi_{V,M}=\pi_{M,V}
\varphi_{V,M}$. Applying Lemma \ref{lem:phi}~(1) in the last equality, also 
\begin{equation}\label{eq:iota-phi}
\iota_{M,V}\widehat\varphi_{V,M}=
\iota_{M,V}\widehat\varphi_{V,M}\pi_{V,M}\iota_{V,M}=
\iota_{M,V}\pi_{M,V}\varphi_{V,M}\iota_{V,M}=
\varphi_{V,M}\iota_{V,M}.
\end{equation}
 
\begin{lemma}\label{lem:phi_hat}
Let $A$ be a regular weak multiplier bialgebra with right full
comultiplication. For any idempotent non-degenerate right $A$-module
$(V,\cdot)$ and any full right $A$-comodule $(M,\lambda,\varrho)$, the map
$\widehat \varphi_{V,M}$ in \eqref{eq:projected} obeys the following
properties.
\begin{itemize}
\item[{(1)}] $\widehat \varphi_{R,M}(r\ox_R m \br s)=r\bl m\ox_R s$.
\item[{(2)}] $\widehat \varphi_{V,R}(r\wl v\ox_R s)=r\ox_R v\wr s$.
\item[{(3)}] $(\widehat \varphi_{V,M} \ox_R W)(V\ox_R \widehat \varphi_{W,M})=
\widehat \varphi_{V\ox_R W,M}$.
\item[{(4)}] $(M\ox_R \widehat \varphi_{V,N})(\widehat \varphi_{V,M}\ox_R N)=
\widehat \varphi_{V,M\ox_R N}$.
\end{itemize}
\end{lemma}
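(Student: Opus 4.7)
The four assertions are all verified by lifting to the unprojected map $\varphi_{V,M}$ of Lemma \ref{lem:phi} and exploiting the two reformulations $\widehat\varphi_{V,M}\pi_{V,M}=\pi_{M,V}\varphi_{V,M}$ and $\iota_{M,V}\widehat\varphi_{V,M}=\varphi_{V,M}\iota_{V,M}$ recorded in equation \eqref{eq:iota-phi}. Each identity is checked after precomposing with the canonical epimorphism $\pi$ (or postcomposing with its section $\iota$), reducing it to an identity of honest linear maps out of iterated tensor products of vector spaces, on which $\varphi$ has the explicit description $\varphi_{V,M}(v\cdot a\ox m)=(M\ox v\cdot(-))\varrho(m\ox a)$ from Lemma \ref{lem:phi}.

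For parts (1) and (2), the first step is to spell out how the neutral objects in $\mathsf{M}^{(A)}$ and in $\mathsf{M}_{(A)}$ — both with underlying firm $R$-bimodule equal to $R$ — carry their canonical right $A$-coaction from \cite[Theorem 5.7]{Bo:wmba_comod} and right $A$-action from \cite[Theorem 5.6]{BoGTLC:wmba}, respectively. Substituting those structures into the defining formula for $\varphi_{R,M}$ (respectively $\varphi_{V,R}$), and then recognizing via \eqref{eq:b_actions} the $\bl$-action produced in (1) and via \eqref{eq:w_actions} the $\wr$-action produced in (2), yields the claimed formulas after descending to $R\ox_R M$ and $V\ox_R R$.

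For part (3) I would compute both $\widehat\varphi_{V\ox_R W,M}\,\pi_{V\ox_R W,M}$ and $(\widehat\varphi_{V,M}\ox_R W)(V\ox_R \widehat\varphi_{W,M})\,\pi_{V,W,M}$ on a generator $v\cdot a\ox w\cdot b\ox m$, using the right $A$-module structure on $V\ox_R W$ inherited from the comultiplication as recalled via $T_1$ and $T_2$. Each side then reduces to an expression in which $\varrho$ is applied once to $m$ and the resulting multiplier on $A\ox A$ is dispersed over the factors $V$ and $W$; equality then follows from the compatibility of the $A$-action on $V\ox_R W$ with those on the factors, together with Lemma \ref{lem:phi}(2) to pass freely across tensor barriers. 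Part (4) is handled dually, using the tensor product comodule structure on $M\ox_R N$ from \cite[Theorem 5.7]{Bo:wmba_comod} and the coassociativity condition recalled in Section \ref{sec:comodule}.

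The chief technical difficulty is that all these manipulations take place in $\M(A\ox A)$ rather than in $A\ox A$, so neither a Sweedler-type notation nor pointwise evaluation is directly available; each intermediate identity must instead be tested by multiplying on an appropriate leg by an arbitrary $c\in A$ and then cancelling via non-degeneracy and idempotency of $A$, much in the style of the proof of Lemma \ref{lem:dual_YD}. Factorization through $\ox_R$ at every intermediate step has in addition to be justified using the $R$-balancedness in Lemma \ref{lem:phi}(2) together with firmness of the bimodule structures \eqref{eq:w_actions} and \eqref{eq:b_actions}. Parts (1) and (2) should be short verifications, whereas parts (3) and (4) will carry the bulk of the bookkeeping.
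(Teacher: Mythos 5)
Your proposal follows essentially the same route as the paper: the paper likewise evaluates $\widehat\varphi$ on generators supplied by the surjectivity of the relevant $A$-actions, uses the explicit formula \eqref{eq:projected} (equivalently, lifts through $\pi$ and $\iota$ as in \eqref{eq:iota-phi}), identifies the structure maps of the unit objects for parts (1) and (2) via Lemma \ref{lem:comod-mod_idempotent}, and for parts (3) and (4) invokes the tensor-product (co)module structures. One small correction: in part (3) the decisive step is not ``compatibility of the $A$-action on $V\ox_R W$ with those on the factors'' but the coassociativity identity $(\varrho\ox A)\varrho^{13}(M\ox T_3)=(M\ox T_3)(\varrho\ox A)$, which is what collapses the two applications of $\varrho$ on the left-hand side into one; relatedly, the generators of $V\ox_R W$ should be taken of the form $(v\ox_R w\cdot b)\cdot a$ (as the paper does) so that $\widehat\varphi_{V\ox_R W,M}$ can be read off directly from \eqref{eq:projected}.
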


\begin{proof}
(1) Since the $A$-action $r\ox a \mapsto r\cdot a =\sqcap^R(ra)$ on $R$ is
surjective by \cite[Proposition 5.3]{BoGTLC:wmba}, the claim follows by 
\begin{eqnarray*}
\widehat \varphi_{R,M}(r\cdot a\ox_R m\br s)
&=&(m\br s)^\varrho\ox_R \sqcap^R(ra^\varrho)
=m^\varrho\ox_R \sqcap^R(ra^\varrho s)\\
&=&m^\varrho\ox_R \sqcap^R((ra)^\varrho s)
=m^\varrho\ox_R \sqcap^R((ra)^\varrho) s\\
&=&m^\varrho\br \sqcap^R((ra)^\varrho) \ox_R s
=\sqcap^R(ra)\bl m \ox_R s
=(r\cdot a)\bl m \ox_R s,
\end{eqnarray*} 
for any $m\in M$, $a\in A$ and $s,r\in R$. In the first equality we applied the
definition of $\widehat\varphi_{R,M}$ via \eqref{eq:projected}. The second
equality follows by \cite[Lemma 4.9~(6)]{Bo:wmba_comod}. In the third equality
we used that $\varrho$ is a left $R$-module map being a left $A$-module map by
\cite[Proposition 2.1~(1)]{Bo:wmba_comod}. The fourth equality is a consequence
of (3.8) in \cite{BoGTLC:wmba} and the penultimate one follows by \cite[Lemma
4.11~(1)]{Bo:wmba_comod}. 

(2) Since the $A$-action on $V$ is surjective by assumption, the claim follows
by
\begin{eqnarray*}
\widehat \varphi_{V,R}(\sqcap^R(b)\wl (v\cdot a) \ox_R s)
&=&\pi_{R,V} (R\ox v\cdot(-))[(1\ox a\overline \sqcap^L(b)s)E]\\
&=&\pi_{R,V} (\sqcap^R(b)(-)\ox v\cdot(-))[(1 \ox as)E]\\
&=&
\sqcap^R(b)\ox_R (v\cdot a)\wr s,
\end{eqnarray*} 
for any $v\in V$, $a\in A$ and $r,s\in R$. The second equality follows by
\cite[Lemma 3.5]{BoGTLC:wmba} and \cite[Lemma 3.9]{BoGTLC:wmba}. In the last
equality we used Lemma \ref{lem:comod-mod_idempotent}~(1).

(3) Since the $R$-actions on $W$ and on $V\ox_R W$ are surjective (by
assumption, and by \cite[Proposition 5.5]{BoGTLC:wmba}, respectively), the
claim follows by 
\begin{eqnarray*}
(\widehat \varphi_{V,M} \ox_R W)&&\hspace{-1cm}
(V\ox_R \widehat \varphi_{W,M})((v\ox_R w\cdot b)\cdot a\ox_R m)\\
&=&\pi_{M,V,W}(M\ox v\cdot(-)\ox w\cdot(-))(\varrho\ox A)\varrho^{13}(M\ox T_3)
(m\ox a\ox b)\\
&=&\pi_{M,V,W}(M\ox v\cdot(-)\ox w\cdot(-))(M\ox T_3)(\varrho\ox A)
(m\ox a\ox b)\\
&=&\widehat \varphi_{V\ox_R W,M}((v\ox_R w\cdot b)\cdot a \ox_R m),
\end{eqnarray*} 
for any $v\in V$, $w\in W$, $m\in M$ and $a,b\in A$. The second equality
follows by the coassociativity condition (2.14) in \cite{Bo:wmba_comod} and
the first and the last equalities follow by the form of the right $A$-action
on $V\ox_R W$ in \cite[Proposition 5.5]{BoGTLC:wmba}. 

(4) Since the $A$-action on $V$ is surjective by assumption, the claim follows
by 
\begin{eqnarray*}
(M\ox_R &&\hspace{-1cm}\widehat \varphi_{V,N})(\widehat \varphi_{V,M}\ox_R N)
(v\cdot a\ox_R m\ox_R n)\\
&=& \pi_{M,N,V}(M\ox N\ox v\cdot(-))(M\ox \varrho_N)\varrho_M^{13}
(m\ox n\ox a)\\
&=&\pi_{M,N,V}(M\ox N\ox v\cdot(-))(M\ox \varrho_N)\varrho_M^{13}
(\iota_{M,N}\pi_{M,N}\ox A)(m\ox n\ox a)\\
&=&\pi_{M,N,V}(M\ox N\ox v\cdot(-)) (\iota_{M,N}\ox A)\varrho_{M\ox_R N}
(m\ox_R n\ox a)\\
&=&\pi_{M\ox_R N,V}(M\ox_R N\ox v\cdot(-))\varrho_{M\ox_R N}(m\ox_R n\ox a)\\
&=&\widehat \varphi_{V,M\ox_R N}(v\cdot a\ox_R m\ox_R n),
\end{eqnarray*} 
for any $v\in V$, $m\in M$, $n\in N$ and $a\in A$. The second equality follows
by \cite[Lemma 5.3~(8)]{Bo:wmba_comod} and the third one follows by the
construction of $\varrho_{M\ox_R N}$ in \cite[Proposition 5.4]{Bo:wmba_comod}.
\end{proof}

\begin{remark}\label{rem:phihat_iso}
Let $A$ be a regular weak multiplier Hopf algebra in the sense of \cite{VDaWa}
(that is, let it be a regular weak multiplier bialgebra with left and right
full comultiplication such that both $A$ and $A^\op$ possess an antipode). For
any idempotent non-degenerate right $A$-module $(V,\cdot)$ and for any full
right $A$-comodule $(M,\lambda,\varrho)$, $\widehat\varphi_{V,M}$ in
\eqref{eq:projected} is an isomorphism (of vector spaces).
\end{remark}

\begin{proof}
Recall from \cite[Proposition 4.3]{VDaWa} that under the assumptions that we
made on $A$, the antipode restricts to a vector space isomorphism $S:A\to
A$. We use its inverse $S^{-1}$ to construct the to-be-inverse 
$$
(\widehat\varphi_{V,M})^{-1}:M\ox_R V \to V\ox_R M,\qquad
m\ox_R v\cdot a\mapsto v\cdot S^{-1}(S(a)^\lambda) \ox_R m^\lambda,
$$
where the implicit summation index notation $\lambda(m\ox a)=m^\lambda\ox
a^\lambda$ is used. Let us see first that $(\widehat\varphi_{V,M})^{-1}$ is a
well-defined linear map. Any element of $M\ox_R V$ is a linear combination of
elements of the form $m\ox_R v\cdot a \overline \sqcap^L(bc)=m\ox_R
\sqcap^R(bc)\wl (v\cdot a)$, for $m\in M$, $v\in V$ and $a,b,c\in A$. Assume
that for some elements $m^i\in M$, $v^i\in V$ and $a^i,b^i,c^i\in A$, the
finite sum $\sum_i m^i\ox_R v^i\cdot a^i \overline \sqcap^L(b^ic^i)$ is equal to
zero. Then applying the section $\iota_{M,V}$ of the canonical epimorphism
$M\ox V \to M\ox_R V$, using (4.5) in \cite{BoGTLC:wmba} and the implicit
summation index notation $T_3(c\ox b)=:c^3\ox b^3$, and omitting the summation
symbol for brevity, we obtain 
$$
0=m^i\br \sqcap^R(c^{i3})\ox \sqcap^R(b^{i3})\wl(v^i\cdot a^i)
=m^i\br \sqcap^R(c^{i3})\ox v^i\cdot a^i\overline \sqcap^L(b^{i3}).
$$
Hence using also the index notation $\varrho(v\ox a)=v^\varrho \ox a^\varrho$,
where implicit summation is understood, it follows for any $d\in A$ that
\begin{eqnarray*}
0&=&v^i\cdot a^i\overline \sqcap^L(b^{i3}) S^{-1}(S(d)^\varrho)\ox 
(m^i\br \sqcap^R(c^{i3}))^\varrho\\
&=&v^i\cdot a^i\overline \sqcap^L(b^{i3}) S^{-1}(S(d)^\varrho
\sqcap^R(c^{i3}))\ox m^{i\varrho}\\
&=&v^i\cdot a^i\overline \sqcap^L(b^{i3}) \overline \sqcap^L(c^{i3})
S^{-1}(S(d)^\varrho )\ox m^{i\varrho}\\
&=&v^i\cdot a^i\overline \sqcap^L(b^{i3} \overline \sqcap^L(c^{i3}))
S^{-1}(S(d)^\varrho )\ox m^{i\varrho}\\
&=&v^i\cdot a^i\overline \sqcap^L(b^ic^i)S^{-1}(S(d)^\varrho )\ox m^{i\varrho}\\
&=&v^i\cdot S^{-1}(S(d)^\varrho S(a^i\overline \sqcap^L(b^ic^i)))\ox m^{i\varrho}\\
&=&v^i\cdot S^{-1}(S(d)S(a^i\overline \sqcap^L(b^ic^i))^\lambda)\ox m^{i\lambda}\\
&=&v^i\cdot S^{-1}(S(a^i\overline \sqcap^L(b^ic^i))^\lambda)d\ox m^{i\lambda}.
\end{eqnarray*}
The second equality holds by \cite[Lemma 4.9~(6)]{Bo:wmba_comod}, the third
one holds by \cite[Lemma 6.14]{BoGTLC:wmba}, the fourth one does by \cite[Lemma
3.4]{BoGTLC:wmba} and the fifth one holds by \cite[Lemma 3.7~(1)]{BoGTLC:wmba}. 
The sixth and the last equalities follow by the anti-multiplicativity of $S$,
cf. \cite[Theorem 6.12]{BoGTLC:wmba}. The penultimate equality follows by the
compatibility condition (2.10) in \cite{Bo:wmba_comod}. Simplifying by $d$,
this proves that $v^i\cdot S^{-1}(S(a^i\overline \sqcap^L(b^ic^i))^\lambda)\ox
m^{i\lambda}$ is equal to zero. Thus applying the canonical epimorphism $V\ox M
\to V\ox_R M$, 
$$
0=
v^i\cdot S^{-1}(S(a^i\overline \sqcap^L(b^ic^i))^\lambda)\ox_R m^{i\lambda}=
(\widehat\varphi_{V,M})^{-1}(m^i\ox_R v^i\cdot a^i\overline \sqcap^L(b^ic^i))
$$
as needed. We turn to proving that $(\widehat\varphi_{V,M})^{-1}$ is indeed
the inverse of $\widehat\varphi_{V,M}$. By (2.3) in \cite{BoGTLC:wmba},
$E(1\ox a)\in R\ox A$ for any $a\in A$. Hence there is a well-defined linear
map
$$
E_1^{M,A}:M\ox A \to M\ox A,\qquad 
m\ox a \mapsto ((-)\bl m \ox A)[E(1\ox a)].
$$
By \cite[Lemma 6.4~(2)]{Bo:wmba_comod}, for any $m\in M$ and $a,b\in A$ the
identity $m^{\varrho\lambda}\ox S(a^\varrho)^\lambda b=
E_1^{M,A}(m\ox S(a)b)$ holds. So simplifying by $b$, 
\begin{equation}\label{eq:phihat_iso_1}
\lambda (M\ox S)\varrho=E_1^{M,A}(M\ox S).
\end{equation}
Using \eqref{eq:phihat_iso_1} in the second equality,
\begin{eqnarray*} 
(\widehat\varphi_{V,M})^{-1}\widehat\varphi_{V,M}(v\cdot a \ox_R m)&=&
v\cdot S^{-1}(S(a^\varrho)^\lambda)\ox_R m^{\varrho\lambda}\\
&=&\pi_{V,M}(v\cdot (-)\ox (-) \bl m)(S^{-1}\ox R)[E^{21}(S(a)\ox 1)]\\
&=&\pi_{V,M}(v\cdot (-)\ox (-) \bl m)[(a\ox 1)F]=
v\cdot a\ox_R m.
\end{eqnarray*}
The third equality holds since by \cite[Lemma 6.14]{BoGTLC:wmba} the
restriction of the anti-multiplicative map $\overline S\,^{-1}$ to $L$ is
equal to $\overline \sigma$, hence it follows by \eqref{eq:F^op} that
$(R\ox S^{-1})[E(1\ox S(a))]=(1\ox a)F^{21}$. 
The last equality follows by Lemma
\ref{lem:comod-mod_idempotent}~(2). Symmetrically,
\begin{eqnarray*} 
\widehat\varphi_{V,M}(\widehat\varphi_{V,M})^{-1}(m\ox_R v\cdot a)&=&
m^{\lambda\varrho}\ox_R v\cdot S^{-1}(S(a)^\lambda)^\varrho\\
&=&\pi_{M,V}(m\br(-)\ox v\cdot (-))[(1\ox a)E]=
m\ox_R v\cdot a,
\end{eqnarray*}
where the second equality follows by applying \eqref{eq:phihat_iso_1} to the
opposite weak multiplier Hopf algebra $A^{\mathsf{op}}$ and its right comodule
$(M,\varrho,\lambda)$; and the last equality follows by Lemma
\ref{lem:comod-mod_idempotent}~(1).
\end{proof}

\begin{proposition}\label{prop:I^{(A)}}
For any regular weak multiplier bialgebra $A$ with right full
comultiplication, the functor $U^{(A)}:\mathsf{M}^{(A)} \to {}_R \mathsf{M}_R$
in \cite[Theorem 5.7]{Bo:wmba_comod} (see Section \ref{sec:comodule})
factorizes through an appropriate strict monoidal functor
$I^{(A)}:\mathsf{M}^{(A)} \to (\mathsf{M}_{(A)})^{U_{(A)}}$ (via the evident
forgetful functor $(\mathsf{M}_{(A)})^{U_{(A)}}\to {}_R \mathsf{M}_R$). 
\end{proposition}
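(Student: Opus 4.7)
The plan is to define $I^{(A)}(M):=(U^{(A)}(M),\widehat\varphi_{-,M})$ on an object $M\in\mathsf{M}^{(A)}$, where $\widehat\varphi_{V,M}\colon V\ox_R U^{(A)}(M)\to U^{(A)}(M)\ox_R V$ is the $R$-bimodule map from \eqref{eq:projected}, for each $V\in\mathsf{M}_{(A)}$. On a morphism $f\colon M\to M'$ I would put $I^{(A)}(f):=U^{(A)}(f)=f$. Since the underlying firm $R$-bimodule of $I^{(A)}(M)$ is $U^{(A)}(M)$ by construction and $I^{(A)}$ acts as identity on the underlying linear maps, the required factorization of $U^{(A)}$ through the evident forgetful functor $(\mathsf{M}_{(A)})^{U_{(A)}}\to{}_R\mathsf{M}_R$ is automatic, as is functoriality.

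To verify that $I^{(A)}(M)$ is genuinely an object of $(\mathsf{M}_{(A)})^{U_{(A)}}$ in the sense of Example \ref{ex:mod}, I would invoke the preparatory lemmas one by one. Naturality in $V$ is the projection of Lemma \ref{lem:phi}(3) to the $R$-module tensor product, yielding $\widehat\varphi_{V',M}(g\ox_R U^{(A)}(M))=(U^{(A)}(M)\ox_R g)\widehat\varphi_{V,M}$ for every $g\in\mathsf{Hom}_{(A)}(V,V')$. Compatibility with the monoidal unit, i.e.\ $\widehat\varphi_{R,M}(r\ox_R m\br s)=r\bl m\ox_R s$, is Lemma \ref{lem:phi_hat}(1). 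Compatibility with the monoidal product on the $V$-side, $\widehat\varphi_{V\ox_R W,M}=(\widehat\varphi_{V,M}\ox_R W)(V\ox_R\widehat\varphi_{W,M})$, is Lemma \ref{lem:phi_hat}(3). Well-definedness of $I^{(A)}$ on morphisms reduces to showing $\widehat\varphi_{V,M'}(V\ox_R f)=(f\ox_R V)\widehat\varphi_{V,M}$ for every $A$-comodule morphism $f\colon M\to M'$, which is the projection of Lemma \ref{lem:phi}(4).

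For strict monoidality I would treat the unit and the product in turn. The monoidal unit of $\mathsf{M}^{(A)}$ is $R$ (with the canonical coaction from \cite[Theorem 5.7]{Bo:wmba_comod}); since $U^{(A)}$ is strict monoidal, it sends this object to the monoidal unit $R$ of ${}_R\mathsf{M}_R$, which in turn is the monoidal unit of $(\mathsf{M}_{(A)})^{U_{(A)}}$. Using firmness of the $R$-actions on $V$ to present arbitrary elements of $V\ox_R R$ in the form $r\wl v\ox_R s$, one checks that the structure natural transformation on the unit of the commutant, built purely from the unitors of ${}_R\mathsf{M}_R$, sends $r\wl v\ox_R s$ to $r\ox_R v\wr s$; by Lemma \ref{lem:phi_hat}(2) this agrees with $\widehat\varphi_{V,R}$. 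For the monoidal product, strict monoidality of $U^{(A)}$ identifies $U^{(A)}(M\ox^{(A)}N)$ with $U^{(A)}(M)\ox_R U^{(A)}(N)$, while Lemma \ref{lem:phi_hat}(4) yields
\[
(U^{(A)}(M)\ox_R\widehat\varphi_{V,N})(\widehat\varphi_{V,M}\ox_R U^{(A)}(N))=\widehat\varphi_{V,M\ox_R N},
\]
which is precisely the structure morphism prescribed for $I^{(A)}(M)\ox I^{(A)}(N)$ in the commutant. Hence $I^{(A)}(M)\ox I^{(A)}(N)=I^{(A)}(M\ox^{(A)}N)$ on the nose.

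The main obstacle, in my view, is not any single heavy computation --- each ingredient is already isolated in Lemma \ref{lem:phi} or Lemma \ref{lem:phi_hat} --- but the careful bookkeeping required to match the structure natural transformation on the monoidal unit of the commutant with $\widehat\varphi_{-,R}$; this forces one to unfold the unitors of ${}_R\mathsf{M}_R$ on both sides and to exploit firmness of $R$ to write every element of $V\ox_R R$ as $r\wl v\ox_R s$ so that Lemma \ref{lem:phi_hat}(2) can be applied verbatim.
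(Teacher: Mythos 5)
Your proposal is correct and follows essentially the same route as the paper: the same object and morphism assignments, with naturality and morphism-compatibility supplied by Lemma \ref{lem:phi}~(3) and (4), the commutant axioms by Lemma \ref{lem:phi_hat}~(1) and (3), and strict monoidality by Lemma \ref{lem:phi_hat}~(2) and (4). The only difference is that you spell out the unitor bookkeeping for the monoidal unit more explicitly than the paper does.
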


\begin{proof}
The object map of the stated functor $I^{(A)}$ takes a full right $A$-comodule
$(M,\lambda,\varrho)$ to the firm $R$-bimodule $M$ (with the actions in
\eqref{eq:b_actions}) and the family of $R$-bimodule maps $\widehat 
\varphi_{V,M}:V\ox_R M \to M\ox_R V$ in \eqref{eq:projected}, for any
idempotent non-degenerate right $A$-module $(V,\cdot)$. Let us see that it
obeys the conditions in Example \ref{ex:mod}. The first compatibility
condition with the monoidal unit $R$ holds by Lemma \ref{lem:phi_hat}~(1). 
The the second compatibility condition with the monoidal product $\ox_R$
follows by Lemma \ref{lem:phi_hat}~(3). Naturality of $\widehat\varphi_{V,M}$
in $V$ follows by Lemma \ref{lem:phi}~(3), proving that 
$(M,\widehat\varphi_{-,M})$ is an object of $(\mathsf{M}_{(A)})^{U_{(A)}}$. The 
stated functor $I^{(A)}$ acts on the morphisms as the identity map. Indeed,
any morphism $f:M\to M'$ in $\mathsf{M}^{(A)}$ is a morphism
$(M,\widehat\varphi_{-,M})\to (M',\widehat\varphi_{-,M'})$ in
$(\mathsf{M}_{(A)})^{U_{(A)}}$ by Lemma \ref{lem:phi}~(4). The functor
$I^{(A)}$ is strict monoidal by Lemma \ref{lem:phi_hat}~(2) and (4). 
\end{proof}

\begin{proposition}\label{prop:I^{(A)}_ff}
For any regular weak multiplier bialgebra $A$ with right full
comultiplication, the functor $I^{(A)}:\mathsf{M}^{(A)} \to
(\mathsf{M}_{(A)})^{U_{(A)}}$ in Proposition \ref{prop:I^{(A)}} is fully 
faithful. 
\end{proposition}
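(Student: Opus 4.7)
The plan is that faithfulness is essentially automatic from the construction of $I^{(A)}$: on morphisms the functor sends an $A$-comodule map $f:M\to M'$ to the same underlying $R$-bimodule map, so two distinct comodule maps yield two distinct morphisms in $(\mathsf{M}_{(A)})^{U_{(A)}}$. Hence all of the content is in proving fullness.

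For fullness, I would start from an arbitrary morphism $f:(M,\widehat\varphi_{-,M})\to(M',\widehat\varphi_{-,M'})$ in $(\mathsf{M}_{(A)})^{U_{(A)}}$, that is, an $R$-bimodule map $f:M\to M'$ satisfying $\widehat\varphi_{V,M'}(V\ox_R f)=(f\ox_R V)\widehat\varphi_{V,M}$ for every $V\in\mathsf{M}_{(A)}$. The goal is to deduce $\varrho'(f\ox A)=(f\ox A)\varrho$, from which $(f\ox A)\lambda=\lambda'(f\ox A)$ follows by the defining compatibility of $\lambda$ and $\varrho$. The natural way to isolate $\varrho$ is to evaluate the compatibility condition at the distinguished object $V=A$, regarded as a right $A$-module via the multiplication; by Example \ref{ex:phi_AM}, the unprojected map $\varphi_{A,M}$ coincides with $\varrho$ in the sense $\varphi_{A,M}(a\ox m)=\varrho(m\ox a)$, so a comodule map is exactly a natural transformation intertwining the $\varphi_{A,-}$.

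The key step is then to promote the $\ox_R$-level identity to a genuine $\ox$-level identity. I would compose the hypothesis $\widehat\varphi_{A,M'}(A\ox_R f)=(f\ox_R A)\widehat\varphi_{A,M}$ on the right with the canonical epimorphism $\pi_{A,M}$ and use $\pi_{A,M'}(A\ox f)=(A\ox_R f)\pi_{A,M}$ (since $f$ is $R$-linear) together with the definition $\widehat\varphi_{V,M}\pi_{V,M}=\pi_{M,V}\varphi_{V,M}$ to rewrite both sides, obtaining $\pi_{M',A}\varphi_{A,M'}(A\ox f)=\pi_{M',A}(f\ox A)\varphi_{A,M}$. Applying the section $\iota_{M',A}$ and invoking Lemma \ref{lem:phi}~(1), which asserts $\iota_{M',A}\pi_{M',A}\varphi_{A,M'}=\varphi_{A,M'}$, the left side collapses to $\varphi_{A,M'}(A\ox f)$. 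For the right side I would use the naturality of the idempotent $\iota\pi$ in its first argument (routine for an $R$-bimodule map) together with Lemma \ref{lem:phi}~(1) applied to $\varphi_{A,M}$ itself, so that $\iota_{M',A}\pi_{M',A}(f\ox A)\varphi_{A,M}=(f\ox A)\iota_{M,A}\pi_{M,A}\varphi_{A,M}=(f\ox A)\varphi_{A,M}$. Combining these gives $\varphi_{A,M'}(A\ox f)=(f\ox A)\varphi_{A,M}$, which via Example \ref{ex:phi_AM} is precisely $\varrho'(f\ox A)=(f\ox A)\varrho$.

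The main obstacle I anticipate is the careful bookkeeping of the transition between the $\ox$ and $\ox_R$ levels: the hypothesis involves the $R$-projected map $\widehat\varphi$, while the desired conclusion needs the unprojected $\varphi_{A,M}$, and getting across that gap cleanly requires using that $\varphi_{A,M}$ already factors through the idempotent $\iota_{M,A}\pi_{M,A}$ (Lemma \ref{lem:phi}~(1)) and that $f$ being an $R$-bimodule map makes $\iota$ and $\pi$ natural in the relevant slot. Once these naturality statements are invoked in the right order the calculation is straightforward; no use of the antipode or of Remark \ref{rem:phihat_iso} is needed, so the argument works under the minimal hypothesis of right fullness of the comultiplication of the regular weak multiplier bialgebra $A$.
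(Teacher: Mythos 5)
Your argument is correct and follows essentially the same route as the paper: faithfulness is immediate since $I^{(A)}$ is the identity on morphisms, and fullness is obtained by composing the commutant condition with $\pi$ and $\iota$, invoking Lemma \ref{lem:phi}~(1) and the $R$-linearity of $f$ to pass from $\widehat\varphi$ to $\varphi$, and then reading off $\varrho'(f\ox A)=(f\ox A)\varrho$ at $V=(A,\mu)$ via Example \ref{ex:phi_AM}. The only (immaterial) difference is that you specialize to $V=A$ before de-projecting, whereas the paper de-projects for general $V$ first.
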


\begin{proof}
Since $I^{(A)}$ acts on the morphisms as the identity map, it is evidently
faithful. In order to see that is full as well, take a morphism
$f:I^{(A)}(M,\lambda,\varrho) \to I^{(A)}(M',\lambda',\varrho')$ in
$(\mathsf{M}_{(A)})^{U_{(A)}}$. This means that $f$ is an $R$-bimodule map
satisfying 
$$
(f\ox_R V)\widehat \varphi_{V,M}=\widehat \varphi_{V,M'} (V\ox_R f),
$$
for any object $(V,\cdot)$ in $\mathsf{M}_{(A)}$. Composing both sides of this
equality by $\pi_{V,M}$ on the right and by $\iota_{M',V}$ on the left and
applying Lemma \ref{lem:phi}~(1), we obtain
$$
(f\ox V) \varphi_{V,M}=
\varphi_{V,M'} (V\ox f),
$$
for any object $(V,\cdot)$ in $\mathsf{M}_{(A)}$; so in particular for the right
$A$-module $(A,\mu)$. Thus applying Example \ref{ex:phi_AM}, we conclude that
$$
(f\ox A) \varrho=
\varrho' (f\ox A),
$$
that is, that $f$ is a morphism of $A$-comodules. 
\end{proof}

Summarizing Proposition \ref{prop:I^{(A)}} and Proposition
\ref{prop:I^{(A)}_ff}, we proved the following. 

\begin{theorem}\label{thm:comod_embed}
For any regular weak multiplier bialgebra $A$ with right full
comultiplication, the functor $U^{(A)}:\mathsf{M}^{(A)} \to {}_R \mathsf{M}_R$
in \cite[Theorem 5.7]{Bo:wmba_comod} (see Section \ref{sec:comodule})
factorizes through the strict monoidal fully faithful functor
$I^{(A)}:\mathsf{M}^{(A)} \to (\mathsf{M}_{(A)})^{U_{(A)}}$ in Proposition
\ref{prop:I^{(A)}} (via the evident forgetful functor
$(\mathsf{M}_{(A)})^{U_{(A)}}\to {}_R \mathsf{M}_R$).
\end{theorem}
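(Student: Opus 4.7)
The statement is presented as a summary of Propositions \ref{prop:I^{(A)}} and \ref{prop:I^{(A)}_ff}, and my plan is simply to observe that these two results together imply the theorem, supplemented by the immediate verification that $U^{(A)}$ does factor as claimed.

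First I would invoke Proposition \ref{prop:I^{(A)}} to extract the strict monoidal functor $I^{(A)}\colon \mathsf{M}^{(A)}\to (\mathsf{M}_{(A)})^{U_{(A)}}$. Recall that on objects it sends a full right $A$-comodule $(M,\lambda,\varrho)$ to the pair $(M,\widehat\varphi_{-,M})$, where the firm $R$-bimodule structure on $M$ is the one defined by \eqref{eq:b_actions}, and on morphisms it is the identity map. In particular the underlying $R$-bimodule of $I^{(A)}(M,\lambda,\varrho)$ is exactly the object $U^{(A)}(M,\lambda,\varrho)$.

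Next I would verify the factorization. The evident forgetful functor $(\mathsf{M}_{(A)})^{U_{(A)}}\to {}_R\mathsf{M}_R$ sends $(P,\sigma)$ to $P$ and acts as the identity on morphisms. Post-composing $I^{(A)}$ with this forgetful functor therefore returns, on an object $(M,\lambda,\varrho)$, the firm $R$-bimodule $M$ of \eqref{eq:b_actions}, which coincides with $U^{(A)}(M,\lambda,\varrho)$; and on morphisms it is again the identity, so it agrees with $U^{(A)}$ on morphisms as well. This is precisely the required factorization.

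Finally I would apply Proposition \ref{prop:I^{(A)}_ff} to conclude that $I^{(A)}$ is fully faithful, yielding the full assertion of the theorem. There is no real obstacle at this stage: all the technical content — that $\widehat\varphi_{-,M}$ does define an object of $(\mathsf{M}_{(A)})^{U_{(A)}}$, that the construction is strict monoidal, and that morphism-hood in $(\mathsf{M}_{(A)})^{U_{(A)}}$ recovers comodule morphisms via specialization to $(A,\mu)$ (as in Example \ref{ex:phi_AM}) — has already been absorbed into Lemmas \ref{lem:phi} and \ref{lem:phi_hat} and the two cited propositions, so the theorem is proved by bookkeeping.
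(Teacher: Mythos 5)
Your proposal is correct and coincides with the paper's own argument: the theorem is stated there precisely as a summary of Proposition \ref{prop:I^{(A)}} (existence, strict monoidality, and the factorization of $U^{(A)}$ through the forgetful functor) and Proposition \ref{prop:I^{(A)}_ff} (full faithfulness), so combining them as you do is exactly the intended proof. The only content beyond citation is the observation that the underlying $R$-bimodule of $I^{(A)}(M,\lambda,\varrho)$ is the one from \eqref{eq:b_actions}, i.e. $U^{(A)}(M,\lambda,\varrho)$, which you verify correctly.
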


One can proceed symmetrically if interchanging the roles of modules and
comodules. 

\begin{proposition}\label{prop:I_{(A)}}
For any regular weak multiplier bialgebra $A$ with right full
comultiplication, the functor $U_{(A)}:\mathsf{M}_{(A)} \to {}_R \mathsf{M}_R$
in \cite[Theorem 5.6]{BoGTLC:wmba} (see Section \ref{sec:module}) factorizes
through an appropriate strict monoidal functor $I_{(A)}:\mathsf{M}_{(A)} \to
{}^{U^{(A)}}(\mathsf{M}^{(A)})$ (via the evident forgetful functor
${}^{U^{(A)}}(\mathsf{M}^{(A)})\to {}_R \mathsf{M}_R$).
\end{proposition}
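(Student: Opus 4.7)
The plan is to define $I_{(A)}$ in complete analogy with (indeed, using the same family of maps as) the functor $I^{(A)}$ of Proposition \ref{prop:I^{(A)}}, exploiting the symmetry of Lemma \ref{lem:phi_hat}: when the variable we vary is a comodule rather than a module, the roles played by its parts (1), (3) get swapped with those of (2), (4). On objects, $I_{(A)}$ will send an idempotent non-degenerate right $A$-module $(V,\cdot)$ to the firm $R$-bimodule $V$ (with the actions of \eqref{eq:w_actions}), equipped with the family of $R$-bimodule maps $\widehat \varphi_{V,M}\colon V\ox_R M \to M\ox_R V$ of \eqref{eq:projected}, indexed by full right $A$-comodules $(M,\lambda,\varrho)$. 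On morphisms, $I_{(A)}$ will act as the identity map.

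To verify that $(V,\widehat\varphi_{V,-})$ is an object of the left commutant ${}^{U^{(A)}}(\mathsf{M}^{(A)})$ described in Example \ref{ex:comod}, the three required conditions follow immediately from the preparatory lemmas: compatibility with the monoidal unit $R$ of $\mathsf{M}^{(A)}$ is exactly Lemma \ref{lem:phi_hat}(2); compatibility with the monoidal product $\ox_R$ of comodules is Lemma \ref{lem:phi_hat}(4); and naturality of $\widehat\varphi_{V,M}$ in the comodule variable $M$ is Lemma \ref{lem:phi}(4). On the morphism side, for any $A$-module map $g\colon V\to V'$, Lemma \ref{lem:phi}(3) gives $(M\ox g)\varphi_{V,M}=\varphi_{V',M}(g\ox M)$, which descends through \eqref{eq:projected} to the relation required of a morphism $(V,\widehat\varphi_{V,-})\to (V',\widehat\varphi_{V',-})$ in ${}^{U^{(A)}}(\mathsf{M}^{(A)})$.

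Strict monoidality of $I_{(A)}$ amounts to two checks. First, the monoidal unit $R$ of $\mathsf{M}_{(A)}$ must be sent to the monoidal unit of ${}^{U^{(A)}}(\mathsf{M}^{(A)})$: Lemma \ref{lem:phi_hat}(1) identifies $\widehat\varphi_{R,M}$ with the composite of the left and right unitors $R\ox_R M \cong M \cong M\ox_R R$ (using the firmness of $M$ as a right $R$-module to lift $m\br s$ to $m\ox_R s$), which is precisely the structure map of the monoidal unit of a left commutant. Second, the monoidal product must be preserved: Lemma \ref{lem:phi_hat}(3) reads $\widehat\varphi_{V\ox_R W,\,M}=(\widehat\varphi_{V,M}\ox_R W)(V\ox_R \widehat\varphi_{W,M})$, which is exactly the formula for the structure map on the monoidal product $(V,\widehat\varphi_{V,-})\ox (W,\widehat\varphi_{W,-})$ in the left commutant.

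Since all the computational work has already been carried out in Lemma \ref{lem:phi} and Lemma \ref{lem:phi_hat}, I expect no substantive obstacle; the proof is a symmetric mirror of that of Proposition \ref{prop:I^{(A)}}. The only point requiring care is the bookkeeping — tracking which part of Lemma \ref{lem:phi_hat} governs which of the two tasks (object-level compatibility versus strict monoidality of $I_{(A)}$), noting that this assignment is swapped relative to the module-valued case, because now the variable we vary in Lemma \ref{lem:phi_hat} is the comodule rather than the module argument.
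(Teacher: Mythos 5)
Your proposal is correct and follows essentially the same route as the paper's own proof: the object map via $\widehat\varphi_{V,-}$, with Lemma \ref{lem:phi_hat}(2) and (4) giving the object-level compatibilities, Lemma \ref{lem:phi}(4) and (3) giving naturality in $M$ and the action on morphisms, and Lemma \ref{lem:phi_hat}(1) and (3) giving strict monoidality. The bookkeeping you flag (the swap of roles of the parts of Lemma \ref{lem:phi_hat} relative to Proposition \ref{prop:I^{(A)}}) is exactly how the paper organizes it.
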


\begin{proof}
The object map of the stated functor $I_{(A)}$ takes an idempotent
non-degenerate right $A$-module $(V,\cdot)$ to the firm $R$-bimodule $V$ (with
the actions in \eqref{eq:w_actions}) and the family of $R$-bimodule maps
$\widehat \varphi_{V,M}:V\ox_R M \to M\ox_R V$ in \eqref{eq:projected}, for any
full right $A$-comodule $(M,\lambda,\varrho)$. Let us see that it
obeys the conditions in Example \ref{ex:comod}. The first compatibility
condition with the monoidal unit $R$ holds by Lemma \ref{lem:phi_hat}~(2). 
The the second compatibility condition with the monoidal product $\ox_R$
follows by Lemma \ref{lem:phi_hat}~(4). Naturality of $\widehat\varphi_{V,M}$
in $M$ follows by Lemma \ref{lem:phi}~(4), proving that
$(V,\widehat\varphi_{V,-})$ is an object of ${}^{U^{(A)}}(\mathsf{M}^{(A)})$. The 
stated functor $I_{(A)}$ acts on the morphisms as the identity map. Indeed,
any morphism $g:V\to V'$ in $\mathsf{M}_{(A)}$ is a morphism
$(V,\widehat\varphi_{V,-})\to (V',\widehat\varphi_{V',-})$ in
${}^{U^{(A)}}(\mathsf{M}^{(A)})$ by Lemma \ref{lem:phi}~(3). The functor
$I_{(A)}$ is strict monoidal by Lemma \ref{lem:phi_hat}~(1) and (3). 
\end{proof}

\begin{proposition}\label{prop:I_{(A)}_ff}
For any regular weak multiplier bialgebra $A$ with right full
comultiplication, the functor $I_{(A)}:\mathsf{M}_{(A)} \to {}^{U^{(A)}}
(\mathsf{M}^{(A)})$ in Proposition \ref{prop:I_{(A)}} is fully faithful.
\end{proposition}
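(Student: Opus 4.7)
The proof mirrors that of Proposition \ref{prop:I^{(A)}_ff}. Since $I_{(A)}$ acts as the identity map on morphisms, it is immediately faithful, so the task reduces to establishing fullness.

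To this end, take a morphism $g:I_{(A)}(V,\cdot)\to I_{(A)}(V',\cdot')$ in ${}^{U^{(A)}}(\mathsf{M}^{(A)})$, which by the description in Example \ref{ex:comod} and Proposition \ref{prop:I_{(A)}} is an $R$-bimodule map $g:V\to V'$ satisfying
$$
\widehat\varphi_{V',M}(g\ox_R M)=(M\ox_R g)\widehat\varphi_{V,M}
$$
for every full right $A$-comodule $(M,\lambda,\varrho)$. Composing this identity on the right with $\pi_{V,M}$ and on the left with $\iota_{M,V'}$, and invoking \eqref{eq:iota-phi}, Lemma \ref{lem:phi}(1), the $R$-balance of $\varphi_{V',M}$ (Lemma \ref{lem:phi}(2)), and the fact that the $R$-bilinearity of $g$ ensures the commutation $(M\ox g)\iota_{M,V}=\iota_{M,V'}(M\ox_R g)$ (this is the same manipulation performed in the proof of Proposition \ref{prop:I^{(A)}_ff}, but with the tensor legs reversed), I would lift the identity to the un-quotiented form
$$
\varphi_{V',M}(g\ox M)=(M\ox g)\varphi_{V,M}.
$$

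Next, specialize to the canonical full right $A$-comodule $M=(A,T_1,T_3)$. Applying Example \ref{ex:phi_VA} on both sides, this identity expresses a relation between $g(v\cdot a)$ on the one hand and $(A\ox v\cdot(-))T_3(b\ox a)$ composed with $g$ on the other. Pairing both sides against an arbitrary $\omega\in\mathsf{Lin}(A,k)$ on the $A$-tensorand (and, if necessary, first left-multiplying the $A$-leg by an arbitrary $c\in A$), and exploiting the fullness of the comodule $(A,T_1,T_3)$---which holds precisely by the right fullness of the comultiplication---one extracts the identity $g(v\cdot a)=g(v)\cdot a$ for all $v\in V$ and $a\in A$. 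Since $g$ was given as an $R$-bimodule map, this shows that $g$ is a morphism in $\mathsf{M}_{(A)}$, proving fullness.

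The main technical obstacle is the final extraction step. In Proposition \ref{prop:I^{(A)}_ff}, specializing to the regular right $A$-module $(A,\mu)$ and applying Example \ref{ex:phi_AM} immediately produces the coaction $\varrho$ on the nose, since $\varphi_{A,M}(a\ox m)=\varrho(m\ox a)$. Here, the symmetric specialization to $M=(A,T_1,T_3)$ via Example \ref{ex:phi_VA} yields an expression in which the $A$-action on $V$ is entangled with $T_3$, so that recovering the bare $A$-action from the equation requires one to disentangle it by suitable pairings against linear functionals and the comodule fullness of $(A,T_1,T_3)$.
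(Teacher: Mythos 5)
Your proof has the same skeleton as the paper's up to the very last step: faithfulness is immediate since $I_{(A)}$ is the identity on morphisms; the morphism condition in ${}^{U^{(A)}}(\mathsf{M}^{(A)})$ is lifted through $\pi_{V,M}$ on the right and $\iota_{M,V'}$ on the left, using Lemma \ref{lem:phi}~(1), to the un-quotiented identity $(M\ox g)\varphi_{V,M}=\varphi_{V',M}(g\ox M)$; and one then specializes to the comodule $(M,\lambda,\varrho)=(A,T_1,T_3)$ and invokes Example \ref{ex:phi_VA}. All of that is correct and matches the paper.

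The gap is in the final extraction, which you correctly identify as the crux but then attack with the wrong tool. Pairing the $A$-leg of the specialized identity against an \emph{arbitrary} $\omega\in\mathsf{Lin}(A,k)$ only yields, writing $t_\omega$ for the multiplier $(\omega\ox -)\Delta(b)$, an identity of the shape $g\bigl(w\cdot t_\omega\bigr)=g(w)\cdot t_\omega$ for multipliers of this special form; under the standing hypothesis of right fullness alone there is no reason the span of such elements exhausts enough of $A$ to conclude that $g$ is an $A$-module map. Moreover, fullness of the comodule $(A,T_1,T_3)$ is not the relevant property here: it says that the \emph{first} leg of $T_1$ (equivalently $T_3$) spans $A$ after the \emph{second} leg is paired with functionals, whereas your manipulation pairs the first ($A$-valued) leg of $\varphi_{V',A}$ and keeps the $V'$-leg --- the opposite configuration. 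What actually closes the argument is the single functional $\omega=\epsilon$: by the counitality axiom, recorded in \eqref{eq:epsilon_phi} as $(\epsilon\ox V)\varphi_{V,A}(v\ox a)=v\cdot a$, applying $\epsilon\ox V'$ to both sides of the specialized identity collapses it at once to $g(v\cdot ab)=g(v\cdot a)\cdot b$, whence $g$ is a morphism of $A$-modules by idempotency of the action. The missing ingredient is thus counitality, not fullness.
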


\begin{proof}
Since $I_{(A)}$ acts on the morphisms as the identity map, it is evidently
faithful. In order to see that it is full as well, take a morphism
$g:I_{(A)}(V,\cdot) \to I_{(A)}(V',\cdot)$ in ${}^{U^{(A)}} (\mathsf{M}^{(A)})$. This
means an $R$-bimodule map $g$ satisfying
$$
(M\ox_R g)\widehat \varphi_{V,M}=\widehat\varphi_{V,M}(g\ox_R M)
$$
for any object $(M,\lambda,\varrho)$ in $\mathsf{M}^{(A)}$. Composing both
sides of this equality by $\pi_{V,M}$ on the right and by $\iota_{M,V'}$ on
the left and applying Lemma \ref{lem:phi}~(1), we obtain 
$$
(M\ox g) 
\varphi_{V,M}=
\varphi_{V',M}(g\ox M)
$$
for any object $(M,\lambda,\varrho)$ in $\mathsf{M}^{(A)}$; so in particular for
$(M,\lambda,\varrho)=(A,T_1,T_3)$. Composing both sides of the resulting
equality with $\epsilon \ox V'$ and using that by the counitality axiom 
of weak multiplier bialgebra, as appearing in (1.3) in \cite{Bo:wmba_comod},
the map $\varphi_{V,A}$ in Example \ref{ex:phi_VA} obeys 
\begin{equation}\label{eq:epsilon_phi}
(\epsilon \ox V)\varphi_{V,A}(v\ox a)=v\cdot a, \qquad \forall v\in A,\ a\in A,
\end{equation}
we conclude that $g(v\cdot a)=g(v)\cdot a$ for all $v\in V$ and $a\in A$. That
is, $g$ is a morphism of $A$-modules. 
\end{proof}

Summarizing Proposition \ref{prop:I_{(A)}} and Proposition
\ref{prop:I_{(A)}_ff}, we proved the following. 

\begin{theorem}\label{thm:mod_embed}
For any regular weak multiplier bialgebra $A$ with right full
comultiplication, the functor $U_{(A)}:\mathsf{M}_{(A)} \to {}_R \mathsf{M}_R$
in \cite[Theorem 5.6]{BoGTLC:wmba} (see Section \ref{sec:module}) 
factorizes through the strict monoidal fully faithful functor
$I_{(A)}:\mathsf{M}_{(A)} \to {}^{U^{(A)}}(\mathsf{M}^{(A)})$ in Proposition
\ref{prop:I_{(A)}} (via the evident forgetful functor
${}^{U^{(A)}}(\mathsf{M}^{(A)})\to {}_R \mathsf{M}_R$). 
\end{theorem}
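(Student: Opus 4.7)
The plan is to observe that Theorem \ref{thm:mod_embed} is essentially the assembly of Propositions \ref{prop:I_{(A)}} and \ref{prop:I_{(A)}_ff}, so the proof will consist of putting these two results together and checking that the factorization is literal on the nose.

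First I would recall from the construction in Proposition \ref{prop:I_{(A)}} that the object part of $I_{(A)}$ sends an idempotent non-degenerate right $A$-module $(V,\cdot)$ to the pair consisting of the firm $R$-bimodule $V$ (with the actions \eqref{eq:w_actions}) together with the natural family $\widehat\varphi_{V,-}$, and that on morphisms $I_{(A)}$ acts as the identity. The evident forgetful functor ${}^{U^{(A)}}(\mathsf{M}^{(A)}) \to {}_R \mathsf{M}_R$ (obtained from the construction in Section \ref{sec:commutant}) discards the natural-transformation component and retains only the underlying firm bimodule; composing it with $I_{(A)}$ therefore produces exactly $U_{(A)}(V,\cdot)$ on objects and the identity on morphisms. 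This establishes the factorization of $U_{(A)}$.

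Next, Proposition \ref{prop:I_{(A)}} already guarantees that $I_{(A)}$ is strict monoidal, while Proposition \ref{prop:I_{(A)}_ff} supplies full faithfulness. Combined with the factorization just observed, these three facts are precisely the content of the theorem. There is no genuine obstacle; all the substantive work --- well-definedness of $\widehat\varphi_{V,M}$, compatibility with the monoidal unit and monoidal product, naturality in $V$, and the fullness deduced by specialising to $(M,\lambda,\varrho)=(A,T_1,T_3)$ and using \eqref{eq:epsilon_phi} --- has already been carried out in the preceding two propositions, so Theorem \ref{thm:mod_embed} is purely a summary statement.
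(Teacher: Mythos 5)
Your proposal is correct and matches the paper exactly: the paper introduces Theorem \ref{thm:mod_embed} with the words ``Summarizing Proposition \ref{prop:I_{(A)}} and Proposition \ref{prop:I_{(A)}_ff}, we proved the following,'' so the theorem is indeed just the assembly of those two propositions together with the observation that the forgetful functor recovers $U_{(A)}$ on the nose.
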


In contrast to usual, unital weak bialgebras in \cite{WHAI,Nill}, the functors
in Theorem \ref{thm:comod_embed} and Theorem \ref{thm:mod_embed} do not seem
to be equivalences.

\section{Yetter-Drinfeld modules}\label{sec:YD}

The aim of this section is to find the proper notion of Yetter-Drinfeld module
over a regular weak multiplier bialgebra with right full comultiplication. 

For a usual, unital (weak) bialgebra $A$, the isomorphism $I_A$ between the
category of $A$-modules; and the left commutant of the forgetful functor from
the category of $A$-comodules, induces an isomorphism $J_A$ between the
category of Yetter-Drinfeld $A$-modules and the left weak center of the
category of $A$-comodules. Symmetrically, the isomorphism $I^A$, between the
category of $A$-comodules; and the right commutant of the forgetful functor
from the category of $A$-modules, induces an isomorphism $J^A$ between the
category of Yetter-Drinfeld $A$-modules and the right weak center of the
category of $A$-modules. However, as we have seen in the previous section, for
a weak multiplier bialgebra $A$, the analogous functors $I_{(A)}$ and
$I^{(A)}$ are no longer isomorphisms but fully faithful embeddings. In this
section we show that they induce fully faithful embeddings $J_{(A)}$ and
$J^{(A)}$ of the category of appropriately defined Yetter-Drinfeld $A$-modules
into the left weak center of the category of full $A$-comodules, and into the
right weak center of the category of idempotent non-degenerate $A$-modules,
respectively. 

Let $A$ be a weak multiplier bialgebra and let $V$ and $W$ be idempotent
non-degenerate right $A$-modules. Consider the map 
\begin{equation}\label{eq:E_2^VW}
E_2^{V,W}:V\ox W \to V\ox W,\qquad
v\cdot a\ox w\cdot b\mapsto (v\cdot(-) \ox w\cdot(-))[(a\ox b)E]
\end{equation}
in \cite[Lemma 5.4]{BoGTLC:wmba}. Regard $V\ox W$ as a right $A$-module via
the (so-called {\em diagonal}) action
\begin{equation}\label{eq:diag_action}
(v\cdot a\ox w\cdot b)\cdot c=
(v\cdot(-)\ox w\cdot(-))[(a\ox b)\Delta(c)], 
\end{equation}
cf. \cite[Proposition 5.5]{BoGTLC:wmba}. Then for any $v\in V$, $w\in W$ and
$c\in A$,
$$
E_2^{V,W}((v\ox w)\cdot c)=(v\ox w)\cdot c= E_2^{V,W}(v\ox w)\cdot c
$$ 
so in particular $E_2^{V,W}$ is a right $A$-module map.

Throughout the section we use the notation
$$
(v\ox w)\Delta(a):=
(v\ox w)\cdot a
\qquad \textrm{and} \qquad
(v\ox w)\Delta^{\mathsf{op}}(a):=
\mathsf{tw}[(w\ox v)\cdot a].
$$

\begin{lemma}\label{lem:YD_actions}
Let $A$ be a regular weak multiplier bialgebra with right full
comultiplication. Let $X$ be a vector space which carries the structure of an
idempotent non-degenerate right $A$-module $(X,\cdot)$ and the structure of a
full right $A$-comodule $(X,\lambda,\varrho)$. Denote the corresponding
$R$-actions on $X$ in \eqref{eq:w_actions} by $\wl$ and $\wr$, and those in
\eqref{eq:b_actions} by $\bl$ and $\br$.
\begin{itemize}
\item[{(1)}] The following assertions are equivalent.
\begin{itemize}
\item[{(1.a)}] The right $R$-actions $\wr$ and $\br$ are equal. That is, for
 any $x\in X$ and $a,b\in A$, $x\cdot a \overline \sqcap^R(b)=(X\ox
 \epsilon)\lambda(x\cdot a \ox b)$.
\item[{(1.b)}] $E_2^{X,A}\varrho=\varrho$ (where $E_2^{X,A}$ is as in
 \eqref{eq:E_2^VW}).
\end{itemize}
\item[{(2)}] The following assertions are equivalent.
\begin{itemize}
\item[{(2.a)}] The left $R$-actions $\wl$ and $\bl$ are equal. That is, for
 any $x\in X$ and $a,b\in A$, $x\cdot a \overline \sqcap^L(b)=(X\ox
 \epsilon)\varrho(x\cdot a \ox b)$.
\item[{(2.b)}] $\varrho (E_2^{A,X})^{21}=\varrho$ (where $E_2^{A,X}$ is as in
 \eqref{eq:E_2^VW}).
\end{itemize}
\item[{(3)}] If $\varrho [(x\ox a)\Delta^{\mathsf{op}}(b)]= 
\varrho(x\ox a)\Delta(b)$ for all $x\in X$ and $a,b\in A$, then the assertions
in part (1) and part (2) are equivalent also to each other. 
\end{itemize}
\end{lemma}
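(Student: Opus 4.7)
The strategy is to pass between the ``global'' formulations (1.b) and (2.b), involving the idempotent $E$, and the ``pointwise'' formulations (1.a) and (2.a), involving evaluations against $X\ox\epsilon$. The bridge is the auxiliary computation
$$
(X\ox \epsilon)E_2^{X,A}(x\cdot a\ox b)=x\cdot a\sqcap^R(b)=(x\cdot a)\wr \sqcap^R(b),
\qquad x\in X,\ a,b\in A,
$$
obtained by unfolding the definition \eqref{eq:E_2^VW} of $E_2^{X,A}$, pulling the $X$-action outside, using that $(a\ox b)E=(a\ox 1)(1\ox b)E$, and invoking \eqref{eq:PiR} in the form $(\M(A)\ox\epsilon)[(1\ox b)E]=\sqcap^R(b)$. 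A dual identity holds on the other side.

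For part (1), the direction (1.b)$\Rightarrow$(1.a) is obtained by first using the compatibility $(1\ox a)\lambda(x\ox b)=\varrho(x\ox a)(1\ox b)$, the commutativity of $E_2^{X,A}$ with left-multiplication by $(1\ox a)$ on the second leg, and the non-degeneracy of $A$ to reduce (1.b) to the equivalent condition $E_2^{X,A}\lambda=\lambda$. Applying $X\ox \epsilon$ to this and using the auxiliary identity on the left together with \eqref{eq:b_actions} on the right yields $(x\br \overline\sqcap^R(b))$ on one side and $(x\wr r)$-type terms on the other; fullness of the comodule and of the comultiplication then turns the equality into the agreement of the two right $R$-actions on $X$, which is (1.a). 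The converse (1.a)$\Rightarrow$(1.b) runs the same computation in reverse, using the auxiliary identity and the compatibility relation between $\lambda$ and $\varrho$ to rebuild the equality $E_2^{X,A}\varrho=\varrho$ from the pointwise agreement of $\wr$ and $\br$ on $R$.

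Part (2) follows by applying part (1) to the opposite regular weak multiplier bialgebra $A^{\op}$, for which $(X,\varrho,\lambda)$ is a full right comodule and $\overline\sqcap^L$, $E^{21}$ play the roles of $\overline\sqcap^R$, $E$ respectively; the auxiliary identity dualizes to $(\epsilon\ox X)(E_2^{A,X})(b\ox x\cdot a)=\overline\sqcap^L(b)\,(x\cdot a)$, and the remaining argument is symmetric.

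For part (3), the hypothesis $\varrho[(x\ox a)\Delta^{\op}(b)]=\varrho(x\ox a)\Delta(b)$ is an identity in $X\ox A$ parametrized by $b\in A$. The plan is to extend it to the multiplier $E=\overline\Delta(1)\in \M(A\ox A)$: by multiplying both sides by suitable elements of $A\ox A$, using that $\langle \Delta(b)(c\ox d)\rangle$ and $\langle (c\ox d)\Delta(b)\rangle$ span dense subspaces of $A\ox A$, and simplifying via non-degeneracy (in the spirit of Theorem \ref{thm:extend}), one deduces the identity $\varrho[(x\ox a)E^{21}]=\varrho(x\ox a)E$ in $X\ox A$. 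Under the interpretation of $E_2^{X,A}$ and $(E_2^{A,X})^{21}$ as the extended diagonal actions of $E$ and $E^{21}$ respectively (following \eqref{eq:E_2^VW} and \eqref{eq:diag_action}), this reads
$$
\varrho\circ (E_2^{A,X})^{21}=E_2^{X,A}\circ \varrho
\qquad\textrm{as maps}\ X\ox A\to X\ox A,
$$
from which (1.b)$\Leftrightarrow$(2.b) is immediate. Combined with parts (1) and (2), this gives the stated equivalence. The main obstacle will be the rigorous extension of the Yetter-Drinfeld-like hypothesis from elements of $A$ to the multiplier $E$, which requires a careful density/non-degeneracy argument specific to weak multiplier bialgebras.
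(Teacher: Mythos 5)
Your proposal contains several genuine gaps, the most serious being in parts (1) and (2). In part (1), your reduction of (1.b) to the condition $E_2^{X,A}\lambda=\lambda$ does not go through: the compatibility relation attaches $(1\ox b)$ to $\varrho(x\ox a)$ \emph{on the right}, so to pass from $E_2^{X,A}\varrho=\varrho$ to a statement about $\lambda$ you would need $E_2^{X,A}$ to commute with right multiplication by $1\ox b$ in the second leg. But $E_2^{X,A}$ is (diagonal) right multiplication by the multiplier $E$, which commutes with \emph{left} multiplication by $1\ox a$ (since $(c\ox ab)E=(1\ox a)[(c\ox b)E]$) but not with right multiplication (since $(c\ox ab)E\neq [(c\ox a)E](1\ox b)$ in general). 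The paper instead derives from (1.b) the identity $((-)\wr \overline\sqcap^R(a)\ox A)\varrho(y\ox b)=\varrho(y\ox b)(1\ox \sqcap^L(a))$, applies an arbitrary functional $X\ox\omega$, identifies the right-hand side with the $\br$-action, and invokes fullness of the comodule. In part (2), the appeal to the opposite bialgebra fails for a structural reason: a right $A^{\op}$-module is a left $A$-module, and $X$ carries only a right $A$-action, so the hypotheses of part (1) are simply not available for $A^{\op}$; moreover the left action $\wl$ is defined through the anti-isomorphism $\tau$, which has no role in a naive op-symmetry. The paper proves (2) by a separate direct computation (using $E_2^{21}(a\ox bc)=(a\ox 1)((\overline\sqcap^L\ox A)T_2^{21}(c\ox b))$ for one direction and non-degeneracy of the firm left $R$-module $(X,\bl)$ for the other), and you would need to supply something of this kind.

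For part (3) your strategy is essentially the right one and close to the paper's, but the crucial extension step is both unproven and mis-stated: $\langle \Delta(b)(c\ox d)\rangle$ and $\langle (c\ox d)\Delta(b)\rangle$ are \emph{not} dense in $A\ox A$; by axiom (vii) they span only the corner $E(A\ox A)$, respectively $(A\ox A)E$. This is exactly the obstruction that forces the paper's two-step argument: one first shows (using axiom (vii) and the hypothesis) that $(\varrho(E_2^{A,X})^{21}(x\cdot a\ox b))\Delta(c)=\varrho(x\cdot a\ox b)\Delta(c)$, and then one may cancel $\Delta(c)$ only after checking that both sides lie in the range of $E_2^{X,A}$, where the diagonal $A$-action is non-degenerate --- for the direction (1.b)$\Rightarrow$(2.b) this is where (1.b) enters, and for the converse one uses that elements of the form $(\cdots)\Delta(r)$ automatically lie in that range because $\Delta(r)E=\Delta(r)$. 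Your target identity $\varrho\,(E_2^{A,X})^{21}=E_2^{X,A}\varrho$ can in fact be established by this route, so the plan is salvageable, but as written the density/non-degeneracy argument you defer is precisely the mathematical content of part (3) and cannot be waved through.
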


\begin{proof}
(1) From \eqref{eq:E_2^VW} it follows that $E_2^{X,A}(x\ox a)=(x\wr(-)\ox
A)[(1\ox a)E]$. Hence assertion (1.a) implies (1.b) by \cite[Lemma
4.10~(3)]{Bo:wmba_comod}. 

By (1.b) and \cite[Lemma 3.9]{BoGTLC:wmba}, for any $y\in X$ and $a,b\in A$, 
$$
((-)\wr \overline \sqcap^R(a)\ox A)\varrho(y\ox b)=
\varrho(y\ox b)(1\ox \sqcap^L(a)). 
$$
Using this identity in the first equality and \cite[Lemma
4.9~(5)]{Bo:wmba_comod} in the second one, it follows for any $\omega\in
\mathsf{Lin}(A,k)$ that
$$
((X\ox \omega)\varrho(y\ox b))\wr\overline \sqcap^R(a)=
(X\ox \omega)[\varrho(y\ox b) (1\ox \sqcap^L(a))]=
((X\ox \omega)\varrho(y\ox b))\br\overline \sqcap^R(a).
$$
Since $X$ is a full right $A$-comodule, any element of $X$ can be written as a 
linear combination of elements of the form $(X\ox \omega)\varrho(y\ox b)$,
proving that (1.a) holds.

(2) For any $a,b,c\in A$, it follows by \cite[Proposition
2.5~(1)]{BoGTLC:wmba} that
$$
E_2^{21}(a\ox bc)=
(a\ox 1)((\overline \sqcap^L \ox A)T_2^{21}(c\ox b)).
$$
Using this identity in the second equality and introducing the implicit
summation index notation $T_2(b\ox c)=:b^2\ox c^2$, we see that (2.a) implies 
\begin{eqnarray*}
\varrho (E_2^{A,X})^{21}&&\hspace{-1.3cm}(x\cdot a \ox bc)=
\varrho (x\cdot (-)\ox A)E_2^{21}(a\ox bc)=
\varrho (x\cdot a\overline \sqcap^L (c^2) \ox b^2)\\
&\stackrel{\mathrm{(2.a)}}=&\varrho (\sqcap^R (c^2) \bl (x\cdot a) \ox b^2)=
\varrho (x\cdot a \ox b^2\sqcap^R (c^2))=
\varrho (x\cdot a \ox bc);
\end{eqnarray*}
that is, (2.b). The penultimate equality follows by \cite[Lemma
4.9~(3)]{Bo:wmba_comod} and the last one follows by \cite[Lemma
3.7~(4)]{BoGTLC:wmba}.

If (2.b) holds then for any $x\in X$ and $a,b,c\in A$
\begin{eqnarray*}
\varrho(\sqcap^R(c)\wl (x\cdot a)\ox b)=
\varrho(x\cdot a\overline \sqcap^L (c)\ox b)=
\varrho(x\cdot a\ox b\sqcap^R (c))=
\varrho(\sqcap^R(c)\bl (x\cdot a)\ox b).
\end{eqnarray*}
In the second equality we used (2.b) together with \cite[Lemma
3.9]{BoGTLC:wmba}. The last equality follows by \cite[Lemma
4.9~(3)]{Bo:wmba_comod}. By surjectivity of the $A$-action on $X$, this proves
$\varrho(\sqcap^R(c)\wl x \ox b)=\varrho(\sqcap^R(c)\bl x \ox b)$. Applying
$X\ox \epsilon$ to both sides of this equality, we obtain
$$
\sqcap^R (b)\bl (\sqcap^R(c)\wl x)=\sqcap^R (b)\bl (\sqcap^R(c)\bl x),\quad
\forall x\in X, \ b,c\in A.
$$
Since $(X,\bl)$ is a firm left $R$-module by \cite[Theorem 4.5]{Bo:wmba_comod}
and $R$ has local units by \cite[Theorem 4.6~(2)]{BoGTLC:wmba}, $(X,\bl)$ is a
non-degenerate left $R$-module. So we conclude that $\sqcap^R(c)\wl x=
\sqcap^R(c)\bl x$ for all $x\in X$ and $c\in A$; that is, (2.a) holds. 

(3) Under the hypothesis in (3), for any $x\in X$ and $a,b,c\in A$
\begin{eqnarray}\label{eq:YD_actions}
(\varrho (E_2^{A,X})^{21}(x\cdot a \ox b))\Delta(c)&=&
\varrho(((x\cdot (-) \ox A)[(a \ox b)E^{21}])\Delta^{\mathsf{op}}(c))\\
&=&\varrho(x\cdot (-) \ox A)[(a \ox b)E^{21}\Delta^{\mathsf{op}}(c)]\nonumber\\
&=&\varrho(x\cdot (-)\ox A)[(a \ox b)\Delta^{\mathsf{op}}(c)]\nonumber\\
&=&\varrho((x\cdot a \ox b))\Delta(c). \nonumber
\end{eqnarray}
In the second equality we used that for any $x\in X$, 
$A\ox x\cdot(-):A\ox A \to A\ox X$ is a morphism of right $A$-modules with
respect to the diagonal actions in \eqref{eq:diag_action}.

If (1.b) holds then the range of $\varrho$ is contained in the range of
$E_2^{X,A}$, which is a non-degenerate right $A$-module via the diagonal
action \eqref{eq:diag_action} by \cite[Lemma 5.4 and Proposition
5.5]{BoGTLC:wmba}. Thus we conclude from \eqref{eq:YD_actions} that (2.b)
holds. 

By axiom (vii) of weak multiplier bialgebra in \cite[Definition
1.1]{Bo:wmba_comod}, for any given elements $a,b\in A$ there exist finitely
many elements $p^i,q^i, r^i$ such that $(b\ox a)E=\sum_i(p^i\ox
q^i)\Delta(r^i)$. Then (omitting the summation symbol for brevity), it follows
from (2.b) that for any $x\in X$
\begin{eqnarray*}
\varrho(x\cdot a \ox b)&\stackrel{\mathrm{(2.b)}}=&
\varrho (E_2^{A,X})^{21}(x\cdot a \ox b)=
\varrho((x\cdot q^i\ox p^i)\Delta^{\mathsf{op}}(r^i))=
\varrho(x\cdot q^i\ox p^i)
\Delta(r^i)
\end{eqnarray*}
so that (1.b) holds by $E_2\Delta=\Delta$. In the third equality we used our
hypothesis in (3).
\end{proof}

\begin{theorem}\label{thm:YD_def}
Consider a regular weak multiplier bialgebra $A$ with right full
comultiplication. Let $X$ be a vector space which carries the structure of an
idempotent non-degenerate right $A$-module $(X,\cdot)$ and the structure of a
full right $A$-comodule $(X,\lambda,\varrho)$. If
$E_2^{X,A}\varrho=\varrho=\varrho (E_2^{A,X})^{21}$ holds for the maps 
$E_2^{X,A}$ and $E_2^{A,X}$ as in \eqref{eq:E_2^VW}, then the following
assertions are equivalent. 
\begin{itemize}
\item[{(a)}] For any full right $A$-comodule $M$, $\widehat
\varphi_{X,M}:X\ox_R M\to M\ox_R X$ in \eqref{eq:projected} is a morphism of
$A$-comodules. 
\item[{(b)}] The datum $((X,\lambda,\varrho),\widehat\varphi_{X,-})$ is an
 object in the left weak center of $\mathsf{M}^{(A)}$. 
\item[{(c)}] For any idempotent non-degenerate right $A$-module $V$,
 $\widehat\varphi_{V,X}:V\ox_R X\to X\ox_R V$ is a morphism of $A$-modules. 
\item[{(d)}] The datum $((X,\cdot),\widehat\varphi_{-,X})$ is an object in 
the right weak center of $\mathsf{M}_{(A)}$.
\item[{(e)}] For any $x\in X$ and $a,b\in A$, 
$\varrho[(x\ox a)\Delta^{\mathsf{op}}(b)]=\varrho(x\ox a)\Delta(b)$.
\end{itemize}
\end{theorem}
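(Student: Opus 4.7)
The plan is to arrange the five assertions into two formal pairs, (a)$\Leftrightarrow$(b) and (c)$\Leftrightarrow$(d), each linked back to the Yetter--Drinfeld identity (e). I would therefore establish the four implications (a)$\Leftrightarrow$(b), (c)$\Leftrightarrow$(d), (a)$\Leftrightarrow$(e), and (c)$\Leftrightarrow$(e).

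For (a)$\Leftrightarrow$(b) I would unpack what it means for $((X,\lambda,\varrho),\widehat\varphi_{X,-})$ to be an object of the left weak center of $\mathsf{M}^{(A)}$. By Proposition~\ref{prop:I_{(A)}}, the pair $(X,\widehat\varphi_{X,-})$, constructed using only the module structure of $X$, already belongs to the left commutant ${}^{U^{(A)}}(\mathsf{M}^{(A)})$: compatibility with the monoidal unit (Lemma~\ref{lem:phi_hat}(2)), with the monoidal product (Lemma~\ref{lem:phi_hat}(4)), and naturality in the comodule variable (Lemma~\ref{lem:phi}(4)) are all in place. The only extra demand for membership in the left weak center is that each component $\widehat\varphi_{X,M}$ be a morphism in $\mathsf{M}^{(A)}$, which is precisely (a). The equivalence (c)$\Leftrightarrow$(d) is entirely symmetric, invoking Proposition~\ref{prop:I^{(A)}} together with Lemma~\ref{lem:phi_hat}(1),(3) and Lemma~\ref{lem:phi}(3).

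The substantive part is (a)$\Leftrightarrow$(e), with (c)$\Leftrightarrow$(e) handled in parallel. For (a)$\Rightarrow$(e) I would specialise to $M=A$ equipped with its canonical coaction $(T_1,T_3)$. By Example~\ref{ex:phi_VA}, $\varphi_{X,A}(x\cdot a\ox b)=(A\ox x\cdot(-))T_3(b\ox a)$; combined with \eqref{eq:iota-phi}, the statement that $\widehat\varphi_{X,A}$ is an $A$-comodule map lifts to an identity of linear maps on $X\ox A\ox A$. After substituting the explicit tensor-product coaction from \cite[Theorem~5.7]{Bo:wmba_comod} and removing the $\ox_R$ using idempotency of $A$ together with non-degeneracy, this should reduce directly to (e). For (e)$\Rightarrow$(a), I would compute for an arbitrary full comodule $M$ both $\varrho_{M\ox_R X}\widehat\varphi_{X,M}$ and $(\widehat\varphi_{X,M}\ox A)\varrho_{X\ox_R M}$ via the explicit tensor-product coaction, rewrite both expressions using the coassociativity and normalisation axioms for $\varrho_M$, and show that the required equality is precisely what (e) applied in the $X$-slot delivers. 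The argument for (c)$\Leftrightarrow$(e) is analogous: for (c)$\Rightarrow$(e), specialise to $V=A$ with its regular right action, so that by Example~\ref{ex:phi_AM} the map $\varphi_{A,X}$ is $\mathsf{tw}\,\varrho$, and the module-map property of $\widehat\varphi_{A,X}$ relative to the diagonal action \eqref{eq:diag_action} unpacks straight into (e); for (e)$\Rightarrow$(c) verify the diagonal-action compatibility of $\widehat\varphi_{V,X}$ for a general $V$.

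The main obstacle I expect is the bookkeeping in (e)$\Rightarrow$(a) and (e)$\Rightarrow$(c): matching the $R$-balanced nature of $\widehat\varphi$ --- passing between $\ox$ and $\ox_R$ using $\pi$, $\iota$ and Lemma~\ref{lem:phi}(1) --- against the explicit formulas for the tensor-product coactions and diagonal actions, and placing the idempotent $E$ at the right positions. The standing hypothesis $E_2^{X,A}\varrho=\varrho=\varrho(E_2^{A,X})^{21}$ is essential here, licensing insertions and deletions of $E$ by Lemma~\ref{lem:YD_actions}, as is the identity $E_2\Delta=\Delta$ used at the end of that lemma. No additional structural axiom should be required; once the correct specialisation to $M=A$ or $V=A$ has been made, the argument reduces to careful manipulation with the axioms already collected in Section~\ref{sec:prelims}.
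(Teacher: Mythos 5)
Your proposal is correct and follows essentially the same route as the paper: (a)$\Leftrightarrow$(b) and (c)$\Leftrightarrow$(d) by unpacking the weak-center definition on top of Propositions \ref{prop:I_{(A)}} and \ref{prop:I^{(A)}}, then (a)$\Leftrightarrow$(e) and (c)$\Leftrightarrow$(e) by specialising to $(A,T_1,T_3)$ resp.\ $(A,\mu)$ in one direction and a direct computation with the tensor-product (co)actions in the other. The only ingredient you leave implicit is the device that makes (e)$\Rightarrow$(a) work --- writing $(b\ox a)E=\sum_i(p^i\ox q^i)\Delta(r^i)$ via axiom (vii) so that the $E$-insertion licensed by the standing hypothesis becomes something to which (e) applies --- but you correctly locate this as the crux and point to the right part of Lemma \ref{lem:YD_actions}, so this is a matter of detail rather than a gap.
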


\begin{proof}
The equivalences (a)$\Leftrightarrow$(b) and (c)$\Leftrightarrow$(d) are
obvious. 

(c)$\Leftrightarrow$(e) 
Regard $X$ as a right $R$-module via the action $\br=\wr$; cf. Lemma
\ref{lem:YD_actions}. For any object $V$ of $\mathsf{M}_{(A)}$, it follows by
the form of the $A$-action on $X\ox_R V$ in \cite[Proposition 
5.5]{BoGTLC:wmba} that the canonical epimorphism $\pi_{X,V}:X\ox V \to X\ox_R
V$ is a morphism of $A$-modules. By \cite[Lemma 5.4]{BoGTLC:wmba}, the
idempotent map $\iota_{X,V}\pi_{X,V}$ is equal to the $A$-module map
$E_2^{X,V}$ proving that $\iota_{X,V}:X\ox_R V \to X\ox V$ is a morphism of
$A$-modules too. Using this together with Lemma \ref{lem:phi}~(1), we see that
assertion (c) is equivalent to $\varphi_{V,X}$ being a right $A$-module
map. That is, to
\begin{equation}\label{eq:c'}
(X\ox v\cdot (-))[\varrho(x\ox a)\Delta(b)]=
(X\ox v\cdot(-))\varrho((x\ox a)\Delta^{\mathsf{op}}(b)),\ \ 
\forall v\in V,\ x\in X,\ a,b\in A.
\end{equation}
This proves (e)$\Rightarrow$(c). Conversely, applying \eqref{eq:c'} to the
$A$-module $(V,\cdot)=(A,\mu)$ and using the non-degeneracy of $A$, we
conclude that (c)$\Rightarrow$(e).

(a)$\Rightarrow$(e) 
Using \eqref{eq:iota-phi} together with the construction of the $A$-comodules
$X\ox_R M$ and $M\ox_R X$ in \cite[Proposition 5.4]{Bo:wmba_comod} for any
full right $A$-comodule $(M,\lambda_M,\varrho_M)$, (a) is
equivalent to 
$$
(M\ox \varrho)\varrho_M^{13}(\iota_{M,X}\pi_{M,X}\ox A)(\varphi_{X,M}\ox A)=
(\varphi_{X,M}\ox A)(X\ox \varrho_M)\varrho^{13}(\iota_{X,M}\pi_{X,M}\ox A).
$$
Thus by \cite[Proposition 5.1 and Lemma 5.3~(8)]{Bo:wmba_comod}, also to
\begin{equation}\label{eq:a'}
(M\ox \varrho)\varrho_M^{13}(\varphi_{X,M}\ox A)=
(\varphi_{X,M}\ox A)(X\ox \varrho_M)\varrho^{13}.
\end{equation}
Substituting $(M,\lambda_M,\varrho_M)=(A,T_1,T_3)$ in \eqref{eq:a'},
evaluating both sides on $x\cdot a\ox b\ox c$ (for $x\in X$ and $a,b,c\in A$),
and applying $\epsilon \ox X\ox A$ to the results, we obtain by the form
of the counitality axiom in a regular weak multiplier bialgebra in (1.3) in
\cite{Bo:wmba_comod} and by \eqref{eq:epsilon_phi} that
$$
\varrho((x\cdot a \ox c)\Delta^{\mathsf{op}}(b))=
\varrho(x\cdot a \ox c)\Delta(b),
$$
proving (a)$\Rightarrow$(e).

(e)$\Rightarrow$(a) 
By axiom (vii) of weak multiplier bialgebra in \cite[Definition
1.1]{Bo:wmba_comod}, for any given elements $a,b\in A$ there exist finitely
many elements $p^i,q^i,r^i\in A$ such that $(b\ox a)E=\sum_i (p^i\ox
q^i)\Delta(r^i)$. In terms of these elements, for any full right $A$-comodule
$(M,\lambda_M,\varrho_M)$, $m\in M$ and $x\in X$, 
\begin{eqnarray}\label{eq:E>a_1}
(\varphi_{X,M}\ox A)&&\hspace{-1cm}(X\ox \varrho_M)\varrho^{13}
[x\cdot a\ox m\ox b]
\\&=&
(\varphi_{X,M}\ox A)(X\ox \varrho_M)\varrho^{13}(x\cdot (-)\ox M\ox A)
[(a\ox m\ox b)E^{31}]
\nonumber\\&=&
(\varphi_{X,M}\ox A)(X\ox \varrho_M)\varrho^{13}[(x\cdot q^i\ox m\ox p^i)
\Delta(r^i)^{31}]
\nonumber\\&\stackrel{\mathrm{(e)}}=&
(\varphi_{X,M}\ox A)(X\ox \varrho_M)[\varrho^{13}(x\cdot q^i\ox m\ox p^i)
\Delta(r^i)^{13}]
\nonumber\\&=&
(M\ox (x\cdot q^i)^{\varrho'}\cdot(-)\ox A)(\varrho_M\ox A)\varrho_M^{13}
(M\ox T_3)(m\ox r^i\ox p^{i\varrho'})
\nonumber\\&=&
(M\ox (x\cdot q^i)^{\varrho'}\cdot(-)\ox A)(M\ox T_3)(\varrho_M\ox A)
(m\ox r^i\ox p^{i\varrho'})
\nonumber\\&=&
m^\varrho\ox \varrho(x\cdot q^i\ox p^i)\Delta(r^{i\varrho}).\nonumber
\end{eqnarray}
where we omitted the summation symbol for brevity; and used the implicit
summation index notation $\varrho_M(m\ox r)=:m^\varrho\ox r^\varrho$ and
$\varrho(x\ox p)=:x^{\varrho'}\ox p^{\varrho'}$. The first equality follows
by the hypothesis $\varrho=\varrho (E_2^{A,X})^{21}$ and the
penultimate equality follows by the coassociativity condition \cite[Proposition
2.1~(4.d)]{Bo:wmba_comod}. On the other hand, using the same notation, 
\begin{eqnarray}\label{eq:E>a_2}
(M\ox\hspace{-1cm}&& \varrho)\varrho_M^{13}(\varphi_{X,M}\ox A)
[x\cdot a\ox m\ox b]
\\&=&
(M\ox \varrho)(M\ox x\cdot(-)\ox A)\varrho_M^{13}(\varrho_M\ox A)(m\ox a\ox b)
\nonumber\\&=&
(M\ox \varrho)(M\ox x\cdot(-)\ox A)\varrho_M^{13}(\varrho_M\ox A)
[m\ox (a\ox b)E^{21}]
\nonumber\\&=&
(M\ox \varrho)(M\ox x\cdot(-)\ox p^i(-))(M\ox \mathsf{tw})
(\varrho_M\ox A)\varrho_M^{13}(M\ox T_3)(m\ox r^i \ox q^i)
\nonumber\\&=&
(M\ox \varrho)(M\ox x\cdot(-)\ox p^i(-))(M\ox \mathsf{tw}T_3)(\varrho_M\ox A)
(m\ox r^i \ox q^i)
\nonumber\\&=&
m^\varrho\ox \varrho((x\cdot q^i\ox p^i)\Delta^{\mathsf{op}}(r^{i\varrho})).
\nonumber
\end{eqnarray}
The second and the fourth equalities follow by part (2.b) and
part (4.d) of \cite[Proposition 2.1]{Bo:wmba_comod}, respectively. 
The expressions in \eqref{eq:E>a_1} and \eqref{eq:E>a_2} are equal by assertion
(e). This proves that \eqref{eq:a'} and hence assertion (a) holds.
\end{proof}

If the equivalent assertions in Theorem \ref{thm:YD_def} hold, then we term
$X$ a (right-right) Yetter-Drinfeld $A$-module:

\begin{definition} \label{def:YD}
Consider a regular weak multiplier bialgebra $A$ with right full
comultiplication. A {\em right-right Yetter-Drinfeld module} over $A$ is a
vector space $X$ which carries the structure of an idempotent non-degenerate
right $A$-module $(X,\cdot)$ and the structure of a full right $A$-comodule
$(X,\lambda,\varrho)$ such that for all $x\in X$ and $a,b\in A$ 
$$
\varrho[(x\ox a)\Delta^{\mathsf{op}}(b)]=\varrho(x\ox a)\Delta(b)
\quad \textrm{and}\quad
E_2^{X,A}\varrho=\varrho=\varrho (E_2^{A,X})^{21},
$$
in terms of the maps $E_2^{X,A}$ and $E_2^{A,X}$ as in \eqref{eq:E_2^VW}.

A morphism of right-right Yetter-Drinfeld modules is a linear map which is
both a morphism of $A$-modules and a morphism of $A$-comodules. The category
of right-right Yetter-Drinfeld $A$-modules will be denoted by 
$\mathsf{YD}(A)$. 
\end{definition}

\begin{remark}
If $A$ is a regular weak multiplier Hopf algebra in the sense of \cite{VDaWa}
(that is, it is a regular weak multiplier bialgebra with left and right full
comultiplication such that both $A$ and $A^\op$ possess an antipode) and $X$
is a vector space obeying the hypotheses of Theorem \ref{thm:YD_def}, then we
conclude by Remark \ref{rem:phihat_iso} that the equivalent assertions in
Theorem \ref{thm:YD_def} are equivalent also to the 
following. 
\begin{itemize}
\item[{(b')}] The datum $((X,\lambda,\varrho),\widehat\varphi_{X,-})$
is an object in the center of $\mathsf{M}^{(A)}$. 
\item[{(d')}] The datum $((X,\cdot),(\widehat\varphi_{-,X})^{-1})$ is an
object in the center of $\mathsf{M}_{(A)}$.
\end{itemize}
\end{remark}

\begin{theorem}\label{thm:J_functors}
For a regular weak multiplier bialgebra $A$ with right full comultiplication,
the following assertions hold.
\begin{itemize}
\item[{(1)}] There are fully faithful functors $J^{(A)}:\mathsf{YD}(A)\to
(\mathsf{M}_{(A)})^{\mathsf{M}_{(A)}}$ and $J_{(A)}:\mathsf{YD}(A)\to
{}^{\mathsf{M}^{(A)}}(\mathsf{M}^{(A)})$ rendering commutative the
following diagram (in which the unlabelled arrows in the top row denote the
evident forgetful functors). 
$$
\xymatrix{
\mathsf{M}^{(A)}\ar[d]_-{I^{(A)}}&&
\mathsf{YD}(A)\ar[ll]\ar[rr]\ar[ld]_-{J^{(A)}}\ar[rd]^(.6){J_{(A)}}&&
\mathsf{M}_{(A)} \ar[d]^-{I_{(A)}}\\
(\mathsf{M}_{(A)})^{U_{(A)}}&
(\mathsf{M}_{(A)})^{\mathsf{M}_{(A)}}\ar[l]^-{U_{(A)}}&&
{}^{\mathsf{M}^{(A)}}(\mathsf{M}^{(A)})\ar[r]_-{U^{(A)}}&
{}^{U^{(A)}}(\mathsf{M}^{(A)})}
$$
\item[{(2)}] If $I^{(A)}$ is an isomorphism (respectively, an equivalence)
 then so is $J^{(A)}$.
\item[{(3)}] If $I_{(A)}$ is an isomorphism (respectively, an equivalence)
 then so is $J_{(A)}$.
\end{itemize}
\end{theorem}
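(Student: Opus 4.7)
The plan is to derive part (1) from Theorem \ref{thm:YD_def}, Lemma \ref{lem:phi}, and a direct comparison of underlying data, and then to deduce (2) and (3) by transport of structure along $I^{(A)}$, respectively $I_{(A)}$. First I would define the object maps of $J^{(A)}$ and $J_{(A)}$ by $X \mapsto ((X,\cdot),\widehat\varphi_{-,X})$ and $X \mapsto ((X,\lambda,\varrho),\widehat\varphi_{X,-})$, landing in $(\mathsf{M}_{(A)})^{\mathsf{M}_{(A)}}$ and ${}^{\mathsf{M}^{(A)}}(\mathsf{M}^{(A)})$ by Theorem \ref{thm:YD_def}(d) and (b) respectively, and let both functors act as the identity on underlying morphisms. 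That a Yetter--Drinfeld morphism $f$ is automatically a morphism in the appropriate weak center is the content of Lemma \ref{lem:phi}(4) (for $J^{(A)}$, using that $f$ is comodule-linear) and Lemma \ref{lem:phi}(3) (for $J_{(A)}$, using that $f$ is module-linear), both transferring from $\varphi$ to $\widehat\varphi$ through \eqref{eq:projected}. Commutativity of the right square of the diagram reduces to comparing the two firm $R$-bimodule structures on a YD-module $X$: the composite $U_{(A)}J^{(A)}$ uses the one coming from the $A$-action while $I^{(A)}$ applied to the underlying comodule uses the one coming from the coaction, and these coincide by Lemma \ref{lem:YD_actions} built into the definition of YD-module. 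The left square is symmetric.

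Since $J^{(A)}$ and $J_{(A)}$ act as the identity on underlying morphisms they are faithful. For fullness of $J^{(A)}$, given a morphism $\tilde f:J^{(A)}(X)\to J^{(A)}(X')$ in $(\mathsf{M}_{(A)})^{\mathsf{M}_{(A)}}$, the commutativity just established identifies $U_{(A)}(\tilde f)$ with a morphism $I^{(A)}(X)\to I^{(A)}(X')$ in $(\mathsf{M}_{(A)})^{U_{(A)}}$; the fullness of $I^{(A)}$ from Proposition \ref{prop:I^{(A)}_ff} then forces $\tilde f$ to be a comodule map and hence a YD-morphism. Fullness of $J_{(A)}$ follows symmetrically from Proposition \ref{prop:I_{(A)}_ff}.

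For parts (2) and (3) it remains to exhibit essential surjectivity, respectively bijectivity on objects in the isomorphism case. Given $(P,\sigma) \in (\mathsf{M}_{(A)})^{\mathsf{M}_{(A)}}$, apply $U_{(A)}$ and invoke the assumed equivalence $I^{(A)}$ to find a full right $A$-comodule $X$ together with an $R$-bimodule isomorphism $\alpha:X\to P$ satisfying $(\alpha\ox_R V)\widehat\varphi_{V,X}=\sigma_V(V\ox_R\alpha)$ for every $V \in \mathsf{M}_{(A)}$ (taking $X=P$ and $\alpha=\id$ when $I^{(A)}$ is an isomorphism). Transport the $A$-action of $P$ along $\alpha^{-1}$ to endow $X$ with an $A$-module structure; because $\alpha$ is an $R$-bimodule isomorphism, the $R$-actions on $X$ induced by this transported action coincide with the coaction-induced ones, so Lemma \ref{lem:YD_actions} confirms the hypothesis of Theorem \ref{thm:YD_def}. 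The identity $\widehat\varphi_{V,X}=(\alpha^{-1}\ox_R V)\sigma_V(V\ox_R\alpha)$ then presents $\widehat\varphi_{V,X}$ as a composite of $A$-module maps, so Theorem \ref{thm:YD_def}(c) places $X$ in $\mathsf{YD}(A)$, and $\alpha$ becomes an isomorphism $J^{(A)}(X)\cong(P,\sigma)$ in $(\mathsf{M}_{(A)})^{\mathsf{M}_{(A)}}$ (an equality when $\alpha=\id$). The $J_{(A)}$ case is entirely symmetric via $I_{(A)}$. The main delicate point will be checking that this transport really lands in $\mathsf{YD}(A)$; everything there reduces to the fact that $\alpha$ is simultaneously an $R$-bimodule iso and, after transport, an $A$-module iso, making the required $A$-linearity of $\widehat\varphi_{V,X}$ a formal consequence of that of $\sigma_V$.
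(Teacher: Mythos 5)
Your proposal is correct and follows essentially the same route as the paper: the same object assignments $X\mapsto((X,\cdot),\widehat\varphi_{-,X})$ and $X\mapsto((X,\lambda,\varrho),\widehat\varphi_{X,-})$ justified by Theorem \ref{thm:YD_def}, faithfulness from acting as the identity on morphisms, fullness deduced from Proposition \ref{prop:I^{(A)}_ff} (resp. Proposition \ref{prop:I_{(A)}_ff}), and parts (2)--(3) by transporting the $A$-action along the $R$-bimodule isomorphism supplied by the hypothesis on $I^{(A)}$ (resp. $I_{(A)}$) and then invoking Lemma \ref{lem:YD_actions} and Theorem \ref{thm:YD_def}~(c)$\Rightarrow$(e). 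No substantive differences.
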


\begin{proof}
Recall that the morphisms $(V,\sigma)\to (V',\sigma')$ in the right weak
center of $\mathsf{M}_{(A)}$ are the $A$-module maps $V\to V'$ which are also
morphisms $(U_{(A)}V,U_{(A)}\sigma)\to (U_{(A)}V',U_{(A)}\sigma')$ in
$(\mathsf{M}_{(A)})^{U_A}$; and there is a similar description of the
morphisms in the left weak center of $\mathsf{M}^{(A)}$.

(1) First we show that there is a functor $J^{(A)}:\mathsf{YD}(A)\to
(\mathsf{M}_{(A)})^{\mathsf{M}_{(A)}}$, sending a Yetter-Drinfeld module
$(X,\cdot,\lambda,\varrho)$ to $((X,\cdot),\widehat\varphi_{-,X})$ and acting
on the morphisms as the identity map. By Theorem \ref{thm:YD_def}
(e)$\Rightarrow$(d), $((X,\cdot),\widehat\varphi_{-,X})$ is an object in the
right weak center of $\mathsf{M}_{(A)}$. A morphism in $\mathsf{YD}(A)$ is a
morphism of $A$-modules by definition and it is taken by $I^{(A)}$ --- acting
as the identity map --- to a morphism in $(\mathsf{M}_{(A)})^{U_{(A)}}$. Hence
it is a morphism in the right weak center of $\mathsf{M}_{(A)}$. Evidently,
$J^{(A)}$ is faithful and it renders commutative the left half of the
diagram. In order to see that it is full, take any morphism
$J^{(A)}(X,\cdot,\lambda,\varrho)\to J^{(A)}(X',\cdot,\lambda',\varrho')$ in
$(\mathsf{M}_{(A)})^{\mathsf{M}_{(A)}}$. It is by definition a morphism of
$A$-modules and a morphism in $(\mathsf{M}_{(A)})^{U_{(A)}}$. Thus by the
fullness of $I^{(A)}$ (see Proposition \ref{prop:I^{(A)}_ff}), it is a
morphism of $A$-comodules hence a morphism in $\mathsf{YD}(A)$.

The existence and the stated properties of $J_{(A)}$ are proven symmetrically. 
 
(2) Since $J^{(A)}$ is fully faithful by part (1), we only need to show that
if the object map of $I^{(A)}$ is bijective (resp. essentially surjective)
then so is the object map of $J^{(A)}$. To this end, pick up any object
$((V,\cdot),\sigma)$ in the right weak center of $\mathsf{M}_{(A)}$, taken by
the lifted functor $U_{(A)}$ to the object $(V,\sigma)$ in
$(\mathsf{M}_{(A)})^{U_{(A)}}$. By the assumption about $I^{(A)}$, there is a
unique (resp. some) object $(M,\lambda,\varrho)$ in $\mathsf{M}^{(A)}$ such
that $(M,\widehat \varphi_{-,M})$ is equal (resp. isomorphic) to $(V,\sigma)$ in
$(\mathsf{M}_{(A)})^{U_{(A)}}$. Then we can use the $R$-bimodule isomorphism
$M\cong V$ to induce an idempotent non-degenerate right $A$-action $\cdot$ on
$M$. By construction, the $R$-actions on $M$, corresponding to its
$A$-comodule structure and to its $A$-module structure, coincide. Hence
assertions (1.b) and (2.b) in Lemma \ref{lem:YD_actions} hold. Moreover,
since $\sigma_W$ is an $A$-module map for any object $W$ in
$\mathsf{M}_{(A)}$, so is $\widehat \varphi_{W,M}$. Thus by Theorem
\ref{thm:YD_def} (c)$\Rightarrow$(e), $(M,\cdot, \lambda,\varrho)$ is a
Yetter-Drinfeld module such that $J^{(A)}(M,\cdot, \lambda,\varrho)=
((M,\cdot), \widehat \varphi_{-,M})$ is equal (resp. isomorphic) to
$((V,\cdot),\sigma)$ in the right weak center of $\mathsf{M}_{(A)}$. 
 
Part (3) is proven symmetrically.
\end{proof}

\section{The monoidal category of Yetter-Drinfeld modules}\label{sec:YD_cat}

The aim of this section is to prove that the category of Yetter-Drinfeld
modules, over a regular weak multiplier bialgebra $A$ with right full
comultiplication, carries a monoidal structure admitting strict monoidal
forgetful functors to the category of idempotent non-degenerate $A$-modules
and the category of full $A$-comodules. If $A$ is in addition a regular weak
multiplier Hopf algebra in the sense of \cite{VDaWa} (that is, the
comultiplication is both left and right full and both $A$ and $A^\op$ possess
an antipode), then finite dimensional Yetter-Drinfeld modules are shown to
possess duals in this category.
 
\begin{proposition}\label{prop:prod_YD}
Consider a regular weak multiplier bialgebra $A$ with right full
comultiplication. For any right-right Yetter-Drinfeld $A$-modules $X$ and $Y$,
the module tensor product $X\ox_R Y$ over the base algebra $R$ of $A$ is again
a right-right Yetter-Drinfeld $A$-module via the module structure in
\cite[Proposition 5.5]{BoGTLC:wmba} and the comodule structure in
\cite[Proposition 5.4]{Bo:wmba_comod}.
\end{proposition}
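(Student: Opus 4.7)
The strategy is to obtain the Yetter-Drinfeld structure on $X\ox_R Y$ by applying Theorem \ref{thm:YD_def}: first verify its standing hypothesis, then verify condition (c), and conclude (e). Behind this is the picture that the canonical candidate $(X\ox_R Y,\widehat\varphi_{-,X\ox_R Y})$ is nothing but the monoidal product in the right weak center $(\mathsf{M}_{(A)})^{\mathsf{M}_{(A)}}$ of the images $J^{(A)}(X)$ and $J^{(A)}(Y)$ under the embedding of Theorem \ref{thm:J_functors}, assembled via Lemma \ref{lem:phi_hat}(4).

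First I would check the normalization hypothesis of Theorem \ref{thm:YD_def}, namely $E_2^{X\ox_R Y,A}\varrho=\varrho=\varrho(E_2^{A,X\ox_R Y})^{21}$. By Lemma \ref{lem:YD_actions}, this is equivalent to the equality of the right (and of the left) $R$-actions on $X\ox_R Y$ coming from its module structure via \eqref{eq:w_actions} and from its comodule structure via \eqref{eq:b_actions}. Strict monoidality of $U_{(A)}$ (see \cite[Theorem 5.6]{BoGTLC:wmba}) and of $U^{(A)}$ (see \cite[Theorem 5.7]{Bo:wmba_comod}) forces both of these $R$-bimodule structures on $X\ox_R Y$ to agree with the canonical $R$-module tensor product structure, which for any $r,s\in R$ reads
$$
r\wl(x\ox_R y)\wr s=(r\wl x)\ox_R(y\wr s),\qquad
r\bl(x\ox_R y)\br s=(r\bl x)\ox_R(y\br s).
$$
Since $X$ and $Y$ are Yetter-Drinfeld modules, $\wl=\bl$ on $X$ and $\wr=\br$ on $Y$, so the equalities $\wl=\bl$ and $\wr=\br$ on $X\ox_R Y$ follow, establishing the required hypothesis.

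Next I would verify condition (c) of Theorem \ref{thm:YD_def} for $X\ox_R Y$, i.e.\ that $\widehat\varphi_{V,X\ox_R Y}$ is an $A$-module map for every $V\in\mathsf{M}_{(A)}$. The Yetter-Drinfeld property of $X$ and $Y$ gives, via Theorem \ref{thm:YD_def}(e)$\Rightarrow$(c), that $\widehat\varphi_{V,X}$ and $\widehat\varphi_{V,Y}$ are morphisms in $\mathsf{M}_{(A)}$. Since $\mathsf{M}_{(A)}$ is monoidal, the maps $\widehat\varphi_{V,X}\ox_R Y$ and $X\ox_R\widehat\varphi_{V,Y}$, and hence their composite, are again morphisms in $\mathsf{M}_{(A)}$. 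By Lemma \ref{lem:phi_hat}(4) this composite equals $\widehat\varphi_{V,X\ox_R Y}$, proving condition (c). With the normalization hypothesis in hand, the implication (c)$\Rightarrow$(e) of Theorem \ref{thm:YD_def} then supplies the remaining compatibility $\varrho[(z\ox a)\Delta^{\mathsf{op}}(b)]=\varrho(z\ox a)\Delta(b)$ for all $z\in X\ox_R Y$ and $a,b\in A$. All conditions of Definition \ref{def:YD} being met, $X\ox_R Y$ is a Yetter-Drinfeld module.

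\textbf{Main obstacle.} The delicate point is the first step: matching the module-induced and comodule-induced $R$-actions on the tensor product. This is where the strict monoidality of the two forgetful functors is indispensable, as it ensures that both lifted tensor products are carried by the same underlying $R$-bimodule, namely the $R$-module tensor product of the (by Yetter-Drinfeld hypothesis coinciding) $R$-bimodule structures on the factors. Once this is secured, Lemma \ref{lem:phi_hat}(4) collapses the rest of the argument to the already-established compatibility properties of $X$ and $Y$ individually.
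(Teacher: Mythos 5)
Your proposal is correct and follows essentially the same route as the paper: it establishes the coincidence of the module-induced and comodule-induced $R$-actions on $X\ox_R Y$ (via Lemma \ref{lem:YD_actions} and the strict monoidality of the two forgetful functors, a point the paper dispatches with the word ``evidently''), then applies Theorem \ref{thm:YD_def}(e)$\Rightarrow$(c) to $X$ and $Y$, Lemma \ref{lem:phi_hat}(4) to assemble $\widehat\varphi_{V,X\ox_R Y}$, and Theorem \ref{thm:YD_def}(c)$\Rightarrow$(e) to conclude. No gaps.
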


\begin{proof}
By \cite[Proposition 5.5]{BoGTLC:wmba}, $X\ox_R Y$ is an idempotent
non-degenerate right $A$-module. By \cite[Proposition 5.4 and Proposition
5.6]{Bo:wmba_comod}, $X\ox_R Y$ is a full right $A$-comodule. So it remains to
check the compatibility between them. Since the $R$-actions corresponding to
the $A$-module structure, and the $R$-actions corresponding to the
$A$-comodule structure, coincide both on $X$ and $Y$, they evidently coincide
on $X\ox_R Y$ too. Since by Theorem \ref{thm:YD_def}~(e)$\Rightarrow$(c) both
$\widehat\varphi_{V,X}$ and $\widehat\varphi_{V,Y}$ are morphisms of
$A$-modules, for any object $V$ in $\mathsf{M}_{(A)}$, so is
$\widehat\varphi_{V,X\ox_R Y}$ by Lemma \ref{lem:phi_hat}~(4). Thus $X\ox_R Y$
is a Yetter-Drinfeld module by Theorem \ref{thm:YD_def}~(c)$\Rightarrow$(e). 
\end{proof}

\begin{proposition}\label{prop:R_YD}
Consider a regular weak multiplier bialgebra $A$ with right full
comultiplication. Then the base algebra $R$ is a right-right Yetter-Drinfeld
module via the action $\sqcap^R(a)\cdot b=\sqcap^R(\sqcap^R(a)b)=\sqcap^R(ab)$
(see \cite[Proposition 5.3]{BoGTLC:wmba}) and the comodule structure
$(\lambda:r\ox a\mapsto E(1\ox ra), \varrho:r\ox a \mapsto (1\ox ar)E)$ (see
\cite[Example 2.5 and Example 4.4]{Bo:wmba_comod}).
\end{proposition}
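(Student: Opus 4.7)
The plan is to verify the requirements of Definition~\ref{def:YD} for $R$ with the specified structures. By \cite[Proposition 5.3]{BoGTLC:wmba} the prescribed action makes $R$ into an idempotent non-degenerate right $A$-module, and by \cite[Examples 2.5 and 4.4]{Bo:wmba_comod}, $(R,\lambda,\varrho)$ is a full right $A$-comodule. Only the two YD compatibility conditions therefore remain: (i) the normalization $E_2^{R,A}\varrho=\varrho=\varrho(E_2^{A,R})^{21}$, and (ii) the twist relation $\varrho[(r\ox a)\Delta^{\mathsf{op}}(b)]=\varrho(r\ox a)\Delta(b)$.

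For (i) I would apply Lemma~\ref{lem:YD_actions} by establishing its equivalent conditions (1.a) and (2.a), namely that the $R$-bimodule structures on $R$ arising from \eqref{eq:w_actions} and from \eqref{eq:b_actions} coincide. On the comodule side, \eqref{eq:PiBarR} and \eqref{eq:PiR} applied to the explicit formulas of $\lambda$ and $\varrho$ immediately yield $r\br\overline\sqcap^R(c)=\overline\sqcap^R(rc)$ and $\sqcap^R(c)\bl r=\sqcap^R(cr)$; both recover the multiplication in the subalgebra $R\subseteq\M(A)$. On the module side, \eqref{eq:w_actions} combined with $\sqcap^R(a)\cdot b=\sqcap^R(ab)$ and the identity $\tau\sqcap^R=\overline\sqcap^L$ of \eqref{eq:tau} gives the same values, the computation requiring only the standard identities among $\sqcap^R,\overline\sqcap^R,\sqcap^L,\overline\sqcap^L$ established in \cite{BoGTLC:wmba}. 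Lemma~\ref{lem:YD_actions} then supplies (1.b) and (2.b), which is exactly (i).

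For (ii) I would avoid a direct manipulation of $E$ and $\Delta$ and instead appeal to the equivalence (c)$\Leftrightarrow$(e) of Theorem~\ref{thm:YD_def}. That is, it suffices to show that for every object $(V,\cdot)$ of $\mathsf{M}_{(A)}$ the map $\widehat\varphi_{V,R}:V\ox_R R\to R\ox_R V$ of \eqref{eq:projected} is a morphism of $A$-modules. The key input is that $R$ is the monoidal unit of $\mathsf{M}_{(A)}$ (by strict monoidality of $U_{(A)}$, \cite[Theorem 5.6]{BoGTLC:wmba}), so the canonical unitors $\rho_V:V\ox_R R\to V$ and $\ell_V:R\ox_R V\to V$ are isomorphisms in $\mathsf{M}_{(A)}$. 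By Lemma~\ref{lem:phi_hat}~(2), $\widehat\varphi_{V,R}(s\wl v\ox_R r)=s\ox_R v\wr r$, which together with the bimodule identity $(s\wl v)\wr r=s\wl(v\wr r)$ gives $\ell_V\circ\widehat\varphi_{V,R}=\rho_V$. Hence $\widehat\varphi_{V,R}=\ell_V^{-1}\circ\rho_V$ is an $A$-module isomorphism, and Theorem~\ref{thm:YD_def}~(c)$\Rightarrow$(e) yields (ii).

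The main obstacle is the bookkeeping in step (i): disentangling the maps $\sqcap^R,\overline\sqcap^R,\sqcap^L,\overline\sqcap^L,\tau$ and their characterizing identities to confirm that the two induced $R$-bimodule structures on $R$ are both just the multiplication within $R\subseteq\M(A)$. After that, step (ii) is a short categorical argument relying only on the monoidal unit property of $R$ in $\mathsf{M}_{(A)}$ and the explicit form of $\widehat\varphi_{V,R}$ from Lemma~\ref{lem:phi_hat}~(2).
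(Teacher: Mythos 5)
Your proof is correct and follows essentially the same route as the paper: both reduce the normalization condition $E_2^{R,A}\varrho=\varrho=\varrho(E_2^{A,R})^{21}$ to the coincidence of the two induced $R$-bimodule structures on $R$ (each being the multiplication of $R\subseteq \M(A)$) via Lemma \ref{lem:YD_actions}, and both derive the twist relation from the fact that $R$ equipped with $\widehat\varphi$ is a monoidal unit. The only difference is a mirror-image choice in the last step: you invoke Theorem \ref{thm:YD_def}~(c)$\Rightarrow$(e) using Lemma \ref{lem:phi_hat}~(2) on the module side, whereas the paper invokes (b)$\Rightarrow$(e) using Lemma \ref{lem:phi_hat}~(1) on the comodule side.
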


\begin{proof}
By \cite[Proposition 5.3]{BoGTLC:wmba}, $R$ is an idempotent non-degenerate
right $A$-module via the stated action. By \cite[Example 2.5 and Example
4.4]{Bo:wmba_comod} $R$ is a full right $A$-comodule via the stated maps. So
we need to check the compatibility of these structures. Both the $R$-actions
corresponding to the $A$-module structure, and the $R$-actions corresponding
to the $A$-comodule structure, are given by the multiplication in $R$, see
\cite[Proposition 5.3]{BoGTLC:wmba} and \cite[Example 4.8]{Bo:wmba_comod},
respectively. Hence they are equal so that assertions (1.b) and (2.b) in Lemma
\ref{lem:YD_actions} hold. By Lemma \ref{lem:phi_hat}~(1),
$((R,\lambda,\varrho), \widehat\varphi_{R,-})$ is the monoidal unit of
${}^{\mathsf{M}^{(A)}} (\mathsf{M}^{(A)})$ and thus $R$ is a Yetter-Drinfeld 
module by Theorem \ref{thm:YD_def} (b)$\Rightarrow$(e). 
\end{proof}

In view of \cite[Theorem 5.6]{BoGTLC:wmba}, idempotent non-degenerate right
$A$-modules constitute a monoidal category via the $R$-module tensor product
as a monoidal product. By \cite[Theorem 5.7]{Bo:wmba_comod}, also full right
$A$-comodules constitute a monoidal category via the $R$-module tensor product
as a monoidal product. Hence Proposition \ref{prop:prod_YD} and Proposition
\ref{prop:R_YD} give rise to the following.

\begin{theorem} \label{thm:YD_moncat}
For a regular weak multiplier bialgebra $A$ with right full comultiplication,
the category $\mathsf{YD}(A)$ of right-right Yetter-Drinfeld $A$-modules is
monoidal such that there is a commutative diagram of strict monoidal forgetful
functors 
$$
\xymatrix{
\mathsf{YD}(A)\ar[r]\ar[d]&\mathsf{M}_{(A)}\ar[d]\\
\mathsf{M}^{(A)}\ar[r]&{}_R\mathsf{M}_R .}
$$
\end{theorem}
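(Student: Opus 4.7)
The plan is to transport the monoidal structure of ${}_R\mathsf{M}_R$ along the forgetful functors, leaning on Propositions \ref{prop:prod_YD} and \ref{prop:R_YD} to stay within $\mathsf{YD}(A)$. On objects, I would define the monoidal product of Yetter-Drinfeld modules $X$ and $Y$ to be the $R$-module tensor product $X\ox_R Y$ equipped with the $A$-action of \cite[Proposition 5.5]{BoGTLC:wmba} and the $A$-coaction of \cite[Proposition 5.4]{Bo:wmba_comod}; Proposition \ref{prop:prod_YD} guarantees that the result is again a right-right Yetter-Drinfeld module. The unit object will be $R$ with the Yetter-Drinfeld structure exhibited in Proposition \ref{prop:R_YD}. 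On morphisms, I would take the $R$-balanced tensor $f\ox_R g$ formed in ${}_R\mathsf{M}_R$: a morphism in $\mathsf{YD}(A)$ is by Definition \ref{def:YD} simultaneously a morphism of $\mathsf{M}_{(A)}$ and of $\mathsf{M}^{(A)}$, so $f\ox_R g$ is both an $A$-module and an $A$-comodule map, hence again a morphism in $\mathsf{YD}(A)$.

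For the associator and unitors I would use those of the monoidal category ${}_R\mathsf{M}_R$ of firm $R$-bimodules. Since the forgetful functors $U_{(A)}:\mathsf{M}_{(A)}\to {}_R\mathsf{M}_R$ and $U^{(A)}:\mathsf{M}^{(A)}\to {}_R\mathsf{M}_R$ are strict monoidal (see Sections \ref{sec:module} and \ref{sec:comodule}), these coherence isomorphisms automatically lift to morphisms in both $\mathsf{M}_{(A)}$ and $\mathsf{M}^{(A)}$, hence to morphisms in $\mathsf{YD}(A)$. Naturality, pentagon, and triangle then follow at once from the corresponding facts in ${}_R\mathsf{M}_R$, because the forgetful functor $\mathsf{YD}(A)\to {}_R\mathsf{M}_R$ is faithful. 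The commutative square of strict monoidal forgetful functors is immediate, since all four categories share the same $\ox_R$, the same unit $R$, and the same coherence constraints on the underlying firm $R$-bimodules.

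The substantive content is concentrated in Propositions \ref{prop:prod_YD} and \ref{prop:R_YD}; beyond these, the argument is a routine formal transport along strict monoidal faithful forgetful functors and presents no real obstacle. The only point worth double-checking is that the two potential monoidal structures on $X\ox_R Y$, one inherited from $\mathsf{M}_{(A)}$ and one from $\mathsf{M}^{(A)}$, are compatible in the sense of Definition \ref{def:YD} — but this is precisely the content of Proposition \ref{prop:prod_YD}, so no further computation is needed.
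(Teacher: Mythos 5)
Your proposal is correct and follows essentially the same route as the paper: the monoidal structure is inherited from the $R$-module tensor product already underlying the monoidal categories $\mathsf{M}_{(A)}$ and $\mathsf{M}^{(A)}$, with Proposition \ref{prop:prod_YD} providing closure under the product and Proposition \ref{prop:R_YD} providing the unit object. The paper's proof is just a terser statement of exactly this transport argument, so no discrepancy arises.
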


If a usual, unital weak bialgebra as in \cite{WHAI,Nill} possesses a bijective
antipode, then it was shown in \cite[Section 5]{Cae:YD} that finite
dimensional objects posses duals in the category of Yetter-Drinfeld
modules. Below we discuss the duality of finite dimensional Yetter-Drinfeld
modules over a regular weak multiplier Hopf algebra in the terminology of
\cite{VDaWa} --- that is, over a regular weak multiplier bialgebra $A$ with
left and right full comultiplication such that both $A$ and its opposite
possess an antipode.

\begin{theorem}\label{thm:dual}
Let $A$ be a regular weak multiplier Hopf algebra (in the sense of
\cite{VDaWa}) over a field $k$. Then any finite dimensional right-right
Yetter-Drinfeld module possesses a dual in $\mathsf{YD}(A)$.
\end{theorem}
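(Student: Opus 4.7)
The plan is to construct the dual of a finite dimensional $X \in \mathsf{YD}(A)$ on the vector space $X^\ast := \mathsf{Hom}_k(X,k)$, equipping it with a right $A$-action and a right $A$-coaction such that the forgetful functors of Theorem \ref{thm:YD_moncat} send $X^\ast$ to the dual of $X$ in $\mathsf{M}_{(A)}$ and in $\mathsf{M}^{(A)}$, respectively. Since both forgetful functors are strict monoidal, any dual of $X$ in $\mathsf{YD}(A)$ must project to duals in those categories, so these are the only possible structures to try. The dual comodule structure $(\lambda^\ast,\varrho^\ast)$ on $X^\ast$ is provided by \cite[Theorem 6.7]{Bo:wmba_comod}, whose hypotheses are precisely in force. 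For the module structure I would adapt the same construction to the opposite side: using that by Theorem \ref{thm:reg_wmha} the antipode restricts to a bijection $S:A\to A$, the natural left $A$-module structure $(a\cdot x^\ast)(x):= x^\ast(x\cdot a)$ on $X^\ast$ converts into an idempotent non-degenerate right $A$-module structure $(x^\ast\cdot a)(x):= x^\ast(x\cdot S(a))$. That this realises $X^\ast$ as the dual of $X$ in $\mathsf{M}_{(A)}$ is a small preliminary to be established along the lines of \cite[Theorem 6.7]{Bo:wmba_comod} applied to the opposite weak multiplier Hopf algebra.

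Next I would verify that $X^\ast$ with these two structures satisfies Definition \ref{def:YD}. First, the two induced $R$-actions on $X^\ast$ must agree; by construction they transport, through $S$ and $\overline S$, the corresponding $R$-actions on $X$, and the fact that the restrictions of $\overline S$ to $R$ and $L$ are exactly the anti-isomorphisms $\overline \sigma^{\,-1}$ and $\sigma$ of \cite[Lemma 6.14]{BoGTLC:wmba} converts the equality of the two $R$-actions on $X$ into the corresponding equality on $X^\ast$. With Lemma \ref{lem:YD_actions} in hand, it remains to verify the Yetter-Drinfeld identity (e) of Theorem \ref{thm:YD_def}, namely
$$
\varrho^\ast\bigl[(x^\ast \ox a)\Delta^{\op}(b)\bigr] = \varrho^\ast(x^\ast \ox a)\Delta(b)
\qquad \forall\, x^\ast\in X^\ast,\ a,b\in A.
$$
This I would reduce to the corresponding identity on $X$ via the defining formulae for $\varrho^\ast$ and the $A$-action on $X^\ast$; the reduction uses anti-multiplicativity \cite[Theorem 6.12]{BoGTLC:wmba} and anti-comultiplicativity \cite[Corollary 6.16]{BoGTLC:wmba} of $S$, which together swap $\Delta$ and $\Delta^{\op}$ back to each other after two applications of $S$ or $S^{-1}$.

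Finally, the evaluation and coevaluation maps for the duality $X \dashv X^\ast$ in $\mathsf{YD}(A)$ must be produced and shown to be simultaneously morphisms in $\mathsf{M}_{(A)}$ and in $\mathsf{M}^{(A)}$. Since the underlying $R$-bimodule duality of $X$ is that of a finite dimensional object over the firm Frobenius algebra $R$ (expressed in terms of $\delta$ and $\varepsilon$), the (co)evaluations coming from $\mathsf{M}^{(A)}$ and from $\mathsf{M}_{(A)}$ coincide as $R$-bimodule maps; consequently each is both $A$-linear and $A$-colinear, hence a morphism in $\mathsf{YD}(A)$, and the zig-zag identities are inherited from either of the ambient categories. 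The main obstacle I anticipate is the second step above, namely verifying the Yetter-Drinfeld identity on $X^\ast$: it is the only step that genuinely mixes both structures with the antipode, and in the non-unital setting the bookkeeping of which leg of $\Delta(b)$ carries $S$ versus $S^{-1}$, and how this interacts with normalisation idempotents of the type $E_1^{M,A}$ appearing in identities such as \eqref{eq:phihat_iso_1}, is where the technicalities will concentrate.
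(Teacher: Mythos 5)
Your overall strategy coincides with the paper's: equip $X^*=\mathsf{Lin}(X,k)$ with an antipode-twisted action and with the dual comodule structure of \cite[Theorem 6.7]{Bo:wmba_comod}, check the Yetter-Drinfeld compatibility, and show that the evaluation and coevaluation maps are morphisms in both $\mathsf{M}_{(A)}$ and $\mathsf{M}^{(A)}$. However, your formula for the action uses the wrong power of the antipode. The comodule duality you import from \cite[Theorem 6.7]{Bo:wmba_comod} has evaluation $\mathsf{ev}:X^*\ox_R X\to R$ and coevaluation $\mathsf{coev}:R\to X\ox_R X^*$; for the module structure to make $X^*$ a dual of $X$ on the \emph{same} side in $\mathsf{M}_{(A)}$ --- which is what your final step needs, since the two dualities must share their (co)evaluations --- the action has to be $(\varphi\cdot a)(x)=\varphi(x\cdot S^{-1}(a))$, not $(\varphi\cdot a)(x)=\varphi(x\cdot S(a))$. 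Already over a unital Hopf algebra your choice fails: $A$-linearity of $\mathsf{ev}:X^*\ox X\to k$ amounts to $\varphi(x\cdot h_{(2)}S^{\pm 1}(h_{(1)}))=\varepsilon(h)\varphi(x)$, and $h_{(2)}S^{-1}(h_{(1)})=\varepsilon(h)1$ holds whereas $h_{(2)}S(h_{(1)})$ does not; the $S$-twist produces the dual on the opposite side (evaluation $X\ox X^*\to k$), incompatible with the fixed comodule duality. In the paper this is exactly the step where $\mu^{\op}(S^{-1}\ox A)T_3$ is simplified via the identities \eqref{eq:6.14} applied to $A^{\op}$; that simplification is unavailable with $S$ in place of $S^{-1}$.

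A second gap is in your closing argument that the $\mathsf{M}_{(A)}$- and $\mathsf{M}^{(A)}$-(co)evaluations ``coincide as $R$-bimodule maps, hence each is both $A$-linear and $A$-colinear.'' Duals in ${}_R\mathsf{M}_R$ are unique only up to an automorphism of the dual object, so even after establishing that $X^*$ is a dual in $\mathsf{M}_{(A)}$, its duality data need not literally equal the maps (6.13)--(6.14) of \cite{Bo:wmba_comod}; some computation is unavoidable. The paper instead verifies directly that those two specific $R$-bimodule maps are $A$-module morphisms, which is the bulk of its proof and relies on Lemma \ref{lem:dual_YD} and on \eqref{eq:6.14} for $A^{\op}$. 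Your remaining steps (surjectivity and non-degeneracy of the dual action via local units, coincidence of the two $R$-actions on $X^*$ via \cite[Lemma 6.14]{BoGTLC:wmba}, and reduction of the Yetter-Drinfeld identity on $X^*$ to that on $X$ through the anti-(co)multiplicativity of $S$) do match the paper's.
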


\begin{proof}
Recall from \cite[Proposition 4.3]{VDaWa} that the antipode on $A$
restricts to an (anti-algebra) isomorphism $S:A\to A$. We denote its inverse
by $S^{-1}$.

Let $(X,\cdot,\lambda,\varrho)$ be a finite dimensional right-right
Yetter-Drinfeld $A$-module. By the anti-multiplicativity of $S$
(cf. \cite[Proposition 3.5]{VDaWa}), the linear dual $X^*:=\mathsf{Lin}(X,k)$
is a right $A$-module via the action
$$
(\varphi\cdot a)(x):=\varphi(x\cdot S^{-1}(a)),\qquad
\forall \varphi\in X^*,\ x\in X,\ a\in A.
$$
Let us see that this $A$-action is surjective. Since the algebra $A$ has local
units by \cite[Proposition 4.9]{VDaWa}, since the $A$-action on $X$ is
surjective and since $X$ is finite dimensional, there is an element $p\in A$
such that $x\cdot p =x$ for all $x\in X$. For this element $p$, any
$\varphi\in X^*$ and $x\in X$,
$$
(\varphi\cdot S(p))(x)=
\varphi(x\cdot p)=
\varphi(x)
$$
so that $\varphi=\varphi\cdot S(p)$ and thus the $A$-action on $X^*$ is
surjective. The $A$-action on $X^*$ is also non-degenerate. Indeed, if
$\varphi\cdot a=0$ for all $a\in A$, then
$$
0=(\varphi\cdot a)(x)=\varphi(x\cdot S^{-1}(a))\qquad
\forall \ x\in X,\ a\in A,
$$
so by the bijectivity of $S^{-1}$ and surjectivity of the $A$-action on $X$, 
$\varphi(x)=0$ for all $x\in X$ proving that $\varphi=0$.

Via the maps $\lambda^{*S},\varrho^{*S}:X^*\ox A\to X^*\ox A$ in (6.11) of
\cite{Bo:wmba_comod}, $X^*$ is a full $A$-comodule, see \cite[Proposition 6.2
and Proposition 6.5]{Bo:wmba_comod}. So it remains to check the
Yetter-Drinfeld compatibility conditions. The $R$-actions on the above
$A$-module $X^*$ are given by 
\begin{eqnarray*}
(\varphi\cdot a\wr \sqcap^R(b))(x\cdot c)&=&
(\varphi\cdot a \sqcap^R(b))(x\cdot c)=
\varphi(x\cdot cS^{-1}(a\sqcap^R(b)))\\
&=&
\varphi(x\cdot c\overline \sqcap^L(b) S^{-1}(a))=
(\varphi\cdot a)(\sqcap^R(b)\wl (x\cdot c))\\
(\sqcap^R(b)\wl \varphi\cdot a)(x\cdot c)&=&
(\varphi\cdot a \overline \sqcap^L(b))(x\cdot c)=
\varphi(x\cdot cS^{-1}(a\overline\sqcap^L(b)))\\
&=&
\varphi(x\cdot c(\vartheta^{-1}\sqcap^R(b)) S^{-1}(a))=
(\varphi\cdot a)((x\cdot c)\wr (\vartheta^{-1}\sqcap^R(b))),
\end{eqnarray*}
for any $\varphi\in X^*$, $x\in X$ and $a,b,c\in A$, where $\vartheta$ is the
Nakayama automorphism of the firm Frobenius algebra $R$, cf. \cite[Theorem
4.6~(3)]{BoGTLC:wmba}. 
In the third equality of the first computation we used \cite[Lemma
6.14]{BoGTLC:wmba} and in the third equality of the second computation we used
the same lemma together with \cite[Proposition 4.9]{BoGTLC:wmba}. 
Comparing this with \cite[Proposition 6.6]{Bo:wmba_comod} and using that the
$R$-actions \eqref{eq:w_actions} and \eqref{eq:b_actions} on $X$ coincide, we
conclude that they coincide on $X^*$ too. For any $b,d\in A$, introduce the
index notation $T_1(b\ox d)=:b^1\ox d^1$; for $x\in X$ and $d\in A$ introduce
the index notation $\lambda(x\ox d)=:x^\lambda\ox d^\lambda$ and $\varrho(x\ox
d)=:x^\varrho\ox d^\varrho$; where in all cases implicit summation is
understood. For any $\varphi\in X^*$ and $a,c\in A$,
\begin{eqnarray*}
\varrho^{*S}[(\varphi\ox cS(d))&&\hspace{-1cm}(\Delta^{\mathsf{op}}S(b))]
(1\ox S(a))\\
&=&(\varrho^{*S}(\varphi\cdot(-)\ox c(-))(S\ox S)T_1(b\ox d))(1\ox S(a))\\
&=&\varphi((-)^\lambda\cdot b^1) \ox cS(ad^{1\lambda})
=\varphi((-)^\varrho\cdot b^1) \ox cS(a^\varrho d^1),
\end{eqnarray*}
where the last equality follows by the compatibility condition (2.10) in
\cite{Bo:wmba_comod}. Denoting also $T_2(a\ox b)=:a^2\ox b^2$ (with implicit
summation understood),
\begin{eqnarray*}
\varrho^{*S}(\varphi\ox cS(d))(\Delta S(b))(1\ox S(a))
&=&\varrho^{*S}(\varphi\ox cS(d))((S\ox S)T_2^{21}(b\ox a))\\
&=&\varphi((-)^\lambda)\cdot S(b^2)\ox cS(a^2d^\lambda)\\
&=&\varphi(((-)\cdot b^2)^\lambda)\ox cS(a^2d^\lambda)\\ 
&=&\varphi(((-)\cdot b^2)^\varrho)\ox cS(a^{2\varrho}d)
\end{eqnarray*}
where the last equality follows again by (2.10) in \cite{Bo:wmba_comod}. These
expressions are equal since applying the Yetter-Drinfeld compatibility
condition in the second equality,
$$
x^\varrho\cdot b^1\ox a^\varrho d^1=
\varrho(x\ox a)\Delta(b)(1\ox d)=
\varrho[(x\ox a)\Delta^{\mathsf{op}}(b)](1\ox d)=
(x\cdot b^2)^\varrho \ox a^{2\varrho}d
$$
for any $x\in X$. By the bijectivity of $S$ and the non-degeneracy of $A$,
this proves that $X^*$ is a Yetter-Drinfeld module.

For the comultiplication
$$
\delta:R\to R\ox R,\qquad r\mapsto (r\ox 1)F=F(1\ox r)
$$
in \cite[Proposition 4.3~(3)]{BoGTLC:wmba}, introduce the implicit summation
index notation $\delta(r)=:r_1\ox r_2$. The evaluation map 
$$
\mathsf{ev}:X^*\ox_R X \to R,\qquad
\varphi\ox_R x\br r \mapsto \varphi(x\br r_1)r_2
$$ 
in (6.13) in \cite{Bo:wmba_comod} and the coevaluation map 
$$
\mathsf{coev}:R\to X\ox_R X^*,\qquad 
r\mapsto \sum_i r\bl x_i \ox_R \xi^i
$$ 
in (6.14) in \cite{Bo:wmba_comod} --- where $\{x_i\in X\}$ and $\{\xi^i\in
X^*\}$ are finite dual bases --- obey the triangular identities of duality and
they are morphisms of $A$-comodules by \cite[Theorem 6.7]{Bo:wmba_comod}. So
in order to complete the proof, we need to see that they are morphisms of
$A$-modules too. For any $\varphi\in X^*$, $x\in X$, $a,b\in A$ and $r,s\in
R$,
\begin{eqnarray*} 
\mathsf{ev}[(\varphi \ox_R x\cdot ar)\cdot bs]&=&
\mathsf{ev}\,\pi_{X^*,X}(\varphi\cdot(-) \ox x\cdot(-)) T_3(bs\ox ar)\\
&=&
\varphi(x\cdot \mu^{\mathsf{op}}(S^{-1}\ox A)T_3(bs_1\ox ar))s_2\\
&=&
\varphi(x\cdot ar\overline \sqcap^R(bs_1))s_2
=\varphi(x\cdot a\overline \sqcap^R(rbs_1))s_2\\
&=&
\sqcap^R(\varphi(x\cdot ar_1)r_2bs)
=\mathsf{ev}(\varphi\ox x\cdot ar)\cdot bs,
\end{eqnarray*}
so that the evaluation map is a morphism of $A$-modules.
In the second equality we used twice that by \cite[Lemma 3.3]{BoGTLC:wmba},
$T_3(bs\ox a)=T_3(b\ox a)(1\ox s)$. In the third equality we applied an
identity in (6.14) in \cite{BoGTLC:wmba} to the opposite weak multiplier Hopf
algebra $A^{\mathsf{op}}$. The fourth equality follows by \cite[Lemma
3.4]{BoGTLC:wmba}. In the penultimate equality we used that by Lemma
\ref{lem:dual_YD}~(b)=(d), 
$$
\overline \sqcap^R(rbs_1) \ox s_2=
(\overline \sqcap^R \ox R)[(rbs\ox 1)F]=
(R\ox \sqcap^R)[F(1\ox rbs)]=
r_1\ox \sqcap^R(r_2bs).
$$
Recall from (6.12) in \cite{Bo:wmba_comod} the vector space isomorphism 
$$
\kappa:X\ox_R X^*\to \mathsf{Hom}_R(X,X),\qquad 
x\ox_R \varphi \mapsto[y\br r\mapsto \varphi(y\br r_1)x\br r_2].
$$
For any $y\in X$, $r\in R$ and $a,b,c\in A$, omitting for brevity the
summation symbol over $i$, 
\begin{eqnarray*}
[\kappa((\mathsf{coev}\sqcap^R(b))\cdot a)(y\br r)]\cdot c&=&
[(x_i\cdot\overline\sqcap^L(b)(-)\ox \xi^i\cdot(-))(y\br r_1)]
T_4(r_2 c \ox a)\\
&=&
[x_i\cdot\overline\sqcap^L(b)(-)\ox \xi^i(y\cdot r_1S^{-1}(-))]
T_4(r_2 c \ox a)\\
&=&
[x_i\cdot(-) \ox \xi^i(y\cdot r_1S^{-1}(-))]
T_4(r_2 c \ox \overline\sqcap^L(b)a)\\
&=&
y\cdot r_1(\mu^{\mathsf{op}}(A\ox S^{-1})T_4(r_2c\ox \overline\sqcap^L(b)a))\\
&=&
y\cdot r_1\overline\sqcap^L(\overline\sqcap^L(b)a)r_2 c=
y\cdot r\overline\sqcap^L(ba)c\\
&=&
[(\kappa\,\mathsf{coev}(\sqcap^R(b)\cdot a))(y\br r)]\cdot c.
\end{eqnarray*}
Hence by the non-degeneracy of the right $A$-module $X$, since $\kappa$ is
an isomorphism and by the surjectivity of the right $R$-action on $X$, it
follows that the coevaluation map is a morphism of $A$-modules. In the third
equality we used that by \cite[Lemma 3.3]{BoGTLC:wmba},
$(\overline\sqcap^L(b)\ox 1)T_4(c\ox a)=T_4(c\ox \overline\sqcap^L(b)a)$, for
any $a,b,c\in A$. In the fifth equality we applied an identity in (6.14) in
\cite{BoGTLC:wmba} to the opposite weak multiplier Hopf algebra
$A^{\mathsf{op}}$. The penultimate equality follows by \cite[Lemma
3.2]{BoGTLC:wmba}, \cite[Lemma 3.5]{BoGTLC:wmba}, and the fact that
$\mu\delta(r)=r_1 r_2=r$, see \cite[Proposition 4.3~(1)]{BoGTLC:wmba}. 
\end{proof}

\end{document}